\documentclass[a4paper, 11pt]{article}
\usepackage[a4paper, total={6.5in, 8.5in}]{geometry}
\usepackage[english]{babel}
\usepackage[utf8]{inputenc}
\usepackage{amsmath}
\usepackage{setspace}
\usepackage{slashed} 
\usepackage{enumerate}
\usepackage{changepage}
\usepackage{amsfonts}
\usepackage{amssymb}
\usepackage{tikz-cd}
\usepackage{mathtools}
\usepackage{mathrsfs}
\usepackage{amsbsy}
\usepackage{xcolor}
\usepackage{stmaryrd}
\usepackage{varwidth}
\usepackage{array}
\usepackage{tabularx}
\usepackage{tikz}
\usepackage{tikz-cd}
\usetikzlibrary{matrix}
\tikzcdset{cramped}
\usepackage{gensymb}
\usepackage{bm}
\usepackage{graphicx}
\usepackage{tensor}
\usepackage{hyperref}
\hypersetup{%
	pdfborder = {0 0 0}
}
\usepackage{mathtools}
\setlength{\columnsep}{1cm}

\usepackage{braket}
\usepackage{bbm} 
\usepackage[symbol]{footmisc}

\usepackage[backend=biber,
style=numeric,
bibstyle=numeric,
citestyle=numeric,
sorting=nyt,
hyperref=true,
backref=true,
giveninits=true,
doi=false,
isbn=false,
url=true,
]{biblatex}
\renewbibmacro{in:}{}
\addbibresource{mybib.bib}

\usepackage{amsthm}
\theoremstyle{plain}
\newtheorem{theorem}{Theorem}[section]
\newtheorem{lemma}[theorem]{Lemma}
\newtheorem{proposition}[theorem]{Proposition}
\newtheorem{corollary}[theorem]{Corollary}

\theoremstyle{definition}
\newtheorem{definition}[theorem]{Definition}
\newtheorem{example}[theorem]{Example}
\newtheorem{remark}[theorem]{Remark}

\newtheorem{prop/def}[theorem]{Proposition/Definition}

\numberwithin{figure}{section}
\numberwithin{table}{section}
\numberwithin{equation}{section}

\makeatletter
\newcommand*{\transpose}{%
	{\mathpalette\@transpose{}}%
}
\newcommand*{\@transpose}[2]{%
	\raisebox{\depth}{$\m@th#1\intercal$}%
}
\makeatother
\newcommand*\an{\textnormal{an }}
\newcommand*\id{\textnormal{id}}

\newcommand*\vir{\textnormal{vir}}

\newcommand*\sms{\textnormal{ss }}

\newcommand*\pa{\textnormal{pa}}
\newcommand*\td{\textnormal{td}}

\newcommand*\Ob{\textnormal{Ob}}
\newcommand*\ch{\textnormal{ch}}
\newcommand*\tw{\textnormal{tw}}

\newcommand*{\Ext}{\text{Ext}}
\newcommand*{\Coh}{\text{Coh}}
\newcommand*\BB{\mathbb{B}}
\newcommand*\CC{\mathbb{C}}

\newcommand*\EE{\mathbb{E}}
\newcommand*\FF{\mathbb{F}}
\newcommand*\GG{\mathbb{G}}
\newcommand*\HH{\mathbb{H}}

\newcommand*\LL{\mathbb{L}}

\newcommand*\PP{\mathbb{P}}
\newcommand*\QQ{\mathbb{Q}}

\newcommand*\ZZ{\mathbb{Z}}

\newcommand\Hom{\mathcal{H}\textnormal{om}}
\newcommand*\msM{\mathscr{M}}
\newcommand*\msQ{\mathscr{Q}}

\newcommand*\msY{\mathsf{Y}}
\newcommand{\mA}{{\mathcal A}}
\newcommand{\mB}{{\mathcal B}}
\newcommand{\mC}{{\mathcal C}}
\newcommand{\mD}{{\mathcal D}}
\newcommand{\mE}{{\mathcal E}}
\newcommand{\mF}{{\mathcal F}}

\newcommand{\mI}{{\mathcal I}}

\newcommand{\mM}{{\mathcal M}}
\newcommand{\mN}{{\mathcal N}}
\newcommand{\mO}{{\mathcal O}}
\newcommand{\mP}{{\mathcal P}}

\newcommand{\mT}{{\mathcal T}}
\newcommand{\mU}{{\mathcal U}}
\newcommand{\mV}{{\mathcal V}}

\newcommand{\mX}{{\mathcal X}}

\newcommand{\BU}{\text{BU}}

\newcommand{\pl}{{\textnormal{rig}}}
\newcommand{\ob}{{\textnormal{ob}}}

\newcommand*\Map{\text{Map}}
\newcommand*\inv{\textnormal{inv}}
\newcommand*\rig{\textrm{pl}}
\newcommand*\pe{\textrm{pe}}
\newcommand*\inva{\textnormal{inv}}

\newcommand*\Quot{\textnormal{Quot}_S(E_S,n)}
\newcommand*\QuotC{\textnormal{Quot}_C(E_C,n)}
\newcommand*\QuotCC{\textnormal{Quot}_S(\mO^{e}_S,n)}
\newcommand*\Hilb{\textnormal{Hilb}^n(Y)}
\newcommand*\QuotCCC{\textnormal{Quot}_C(\mO^{e}_C,n)}
\newcommand*\QuotX{\textnormal{Quot}_X(E_X,n)}

\newcommand*\QuotY{\textnormal{Quot}_Y(E_Y,n)}
\newcommand*\QuotS{\textnormal{Quot}_S(E_S,n)}
\newcommand*\QuotSC{\textnormal{Quot}_{S}(\mathcal{O}^{e},n)}

\newcommand*\T{T^{\textnormal{vir}}}

\newcommand*\rnk{\textnormal{rk}}
\newcommand*\JS{\textnormal{JS}}
\newcommand*\uHom{\underline{\textnormal{Hom}}}
\newcommand*\cob{\textnormal{cob}}

\newcommand{\GL}{\textnormal{GL}}

\DeclarePairedDelimiter\floor{\lfloor}{\rfloor}

\newcommand\numberthis{\addtocounter{equation}{1}\tag{\theequation}}

\newcommand\Item[1][]{%
	\ifx\relax#1\relax  \item \else \item[#1] \fi
	\abovedisplayskip=0pt\abovedisplayshortskip=0pt~\vspace*{-\baselineskip}}
\title{Wall-crossing for punctual quot-schemes}
\author{Arkadij Bojko\thanks{Address: Professur für Mathematik ETH Zürich, HG J 16.2 Rämistrasse 101, Zürich 8092, Switzerland , E-mail: arkadij.bojko@math.ethz.ch
}}
\date{}
\begin{document}
	
	\maketitle
	\begin{abstract}
		We study punctual quot-schemes of torsion-free sheaves $E_Y$ on smooth projective curves, surfaces and Calabi--Yau fourfolds via their virtual geometry. Our goal is to give a complete description of the virtual fundamental classes and their tautological integrals. In the fourfold case, we first construct these classes under additional conditions.
  
		We use novel methods relying on the wall-crossing of Joyce. Our results include
  \begin{enumerate}
      \item the dependence of the cobordism classes on the torsion-free sheaf $E_Y$ where $Y$ is a surface,
      \item relations to the previous results in the literature, which addressed the case of a trivial $E_Y$,
      \item a new 12-fold correspondence relating Segre and Verlinde invariants for curves, surfaces and Calabi--Yau fourfolds based on the one observed by Arbesfeld--Johnson--Lim--Oprea--Pandharipande in dimensions one and two,
      \item a closed formula for the Nekrasov genus, which gives a compact analogue of Nekrasov's conjecture.
  \end{enumerate}
		As our techniques are orthogonal to the original literature, we make our work independent by proving a new combinatorial identity in \cite{bojko4}. 
	\end{abstract}
 \begin{small}
	\begin{center}
		\begin{tabular}{p{4cm} p{10cm}}
			$\textbf{Hst}$, $\textbf{Top}$  & The $\infty$-categories of higher stacks and topological spaces. \\
			$\textbf{Ho}(-)$  &The homotopy category of the respective $\infty-$category.\\
    $(-)^{\text{top}}$& The topological realization functor.\\
			$\mX^{\pl}$& For a stack $\mX$ with a $[*/\GG_m]$ action $\rho: [*/\GG_m]\times \mX\to \mX$ denotes its \textit{rigidification}\,.\\
			$\Pi^{\pl}$& The natural projection $\Pi^{\pl}: \mM\to \mM^{\pl}$\,.\\
			$\Hom(-,-)$& The derived hom-functor, for which we never explicitly specify that it is derived.
			\\
			$T^\vir$& The virtual normal bundle of a moduli scheme with a Behrend--Fantechi or Oh--Thomas obstruction theory.\\
			$[z^n]\{f(z)\}$& For a power-series $f(z)$ in a variable $z$ denotes the $n$'th coefficient of $z$.
			\\
			$\mC_Y$ & The topological space $\Map(Y^{\an},\BU\times \ZZ )$ where $(-)^{\an}$ is the analytification.\\
			$\mP_Y$ & The space $\mC_Y\times BU\times \ZZ$.
			\\
			$\mathfrak{U}$ & The universal K-theory class on $\BU\times \ZZ$.\\
			$\mathfrak{E}$& The universal K-theory class on $Y\times \mC_Y$\\
			$\mP_Y$& The topological space $(\BU\times \ZZ)\times \mC_Y$.\\
			$E_Y$& A torsion-free sheaf on $Y$ of rank $e$.\\
			$\Hilb$ & Hilbert scheme of $n$ points of $Y$.\\
			 $Q_Y$ & Moduli scheme $\text{Quot}_Y(E_Y,\beta,n)$ of quotients $E_Y\to F$ for a one-dimensional sheaf $F$ with support $\beta$ and Euler characteristic $n$  when $E_Y$, $\beta$ and $n$ are known.\\
			$K^0(Y)$& Topological K-theory of $Y$.
			\\
			$\alpha^{[n]}$ & The tautological class for a K-theory class $\alpha\in K^0(X)$ of rank $a$.\\
			$\llbracket F^\bullet \rrbracket$ & The K-theory class associated to a perfect complex $F^\bullet$. \\
			$\mN^{E_Y}_0$& The stack of pairs $W\otimes E_Y\to F$.\\
			$\Omega^{E_Y}$& The natural map from $(\mN_0^{E_Y})^{\text{top}}$ to $\mP_Y$. \\
				$[\QuotY]_{\vir}$, $[\mM_{np}]_{\inv}$& The virtual fundamental class of a quot-scheme and classes counting zero-dimensional sheaves in $H_*(\mN^{E_Y}_0)/T\big(H_{*-2}(\mN^{E_Y}_0)\big)$.\\
			$\mathscr{Q}_{E_Y,n}$, $\mathscr{M}_{np}$& The classes obtained from $[\QuotY]_{\vir}, [\mM_{np}]_{\inv}$ via the map $\overline{\Omega}^{E_Y}_*: H_*(\mN^{E_Y}_0)/T\big(H_{*-2}(\mN^{E_Y}_0)\big)\to H_*(\mP_Y)/T\big(H_{*-2}(\mP_Y)\big)$ induced by pushforward in homology.
			\\
			$c_i$& The degree $i$ Chern class of $Y$ when $Y$ is understood.\\
			$\mathcal{A}_{E_Y}$&The abelian category of morphisms $\phi:W\otimes E_Y\to F$, where $W$ is a vector space and $F$ is zero-dimensional. \\
			$V_*, \ket{0},T,\mathsf{Y}_{(-)}$& The translation operator $T:V_*\to V_{*+2}$, a vacuum vector and the state-field correspondence of some vertex algebra $V_*$.\\
   $\otimes$&The tensor product over $\ZZ$ unless a different ring is specified.
   \\
			$V_*$& After §3.2, we reserve this notation for the vertex algebra on $H_*(\mN^{E_Y}_0)$.\\
			$P_*$ & The vertex algebra on $H_*(\mP_Y)$.\\
			$B_i,B^\vee_i$& A basis $B_i\subset H^i(Y)$ and its dual 
with respect to the cohomological pairing.\\
$\Lambda_{-y}$& The multiplicative genus associated to the invertible powerseries $1-ye^t$ in $t$.
\end{tabular}
	\end{center}
	\end{small}
\setstretch{1.1}
	
	\section{Introduction}
	\subsection{Background}
	The goal of this paper is to give a unifying treatment of virtual invariants for Grothendieck's quot-schemes of zero-dimensional quotients. More explicitly, we study here the case when the projective variety $Y$ is a smooth curve $C$, a surface $S$ or a Calabi--Yau fourfold $X$.  For such $Y$, we have the projective variety $ \textnormal{Quot}_Y(E_Y,\beta,n)$ of 1-dimensional quotients 
	$$E_Y\twoheadrightarrow F \quad \textnormal{for} \quad \chi(F)=n\,,\quad \textnormal{and} \quad [F]=\beta\in H_2(Y,\ZZ)\,,$$
	where two such quotients are identified if their kernels $I\hookrightarrow E_Y$ are equal as subsheaves. From now on, we will always work with (co)homology theories over rational numbers without mentioning it.

 We will often use the notation $Q_Y:=\textnormal{Quot}_Y(E_Y,\beta,n)$, when $E_Y,\beta$ and $n$ are understood. We will always denote by $e$ the rank of $E_Y$. 
	Defining the projections 
	$$
	Y\xleftarrow{\pi_Y}Y\times Q_Y\xrightarrow{\pi_{Q_Y}}Q_Y
	$$
	we have the universal complex 
	$$
	\mI = \big[\pi^*_Y(E_Y) \longrightarrow \mF\big]
	$$
	for the universal zero-dimensional sheaf $\mF$.
	
	In general, the above moduli spaces are not smooth and singularities appear already in the case when $Y=S$ and $\beta=0$ as long as $e:=\textnormal{rk}(E_Y)\geq 2$ and $n\geq 2$ (see Stark \cite[Prop. 1]{stark2}). In \cite[Conj. 1]{stark2}, these singularities have been conjectured to be rational. However, we choose a different path in our study of their geometries focusing on the virtual invariants. We then need to distinguish between different dimensions, because their virtual geometries have different origins:
	\begin{itemize}
		\item When $Y=C$ and $\beta=0$, $Q_C$ is known to be smooth, hence we can study its fundamental class
		\begin{equation}
		[Q_C]\in H_{2ne}(Q_C)\,.
		\end{equation}
		directly.
		\item When $Y=S$, we rely on the work of Behrend--Fantechi \cite{BF} and Li--Tian \cite{LiTian} to obtain virtual fundamental classes using the natural perfect obstruction theories on $Q_S$. These were constructed by Marian--Oprea--Pandharipande \cite[Lem. 1.1]{MOP1} (see Stark \cite[Prop. 5]{stark2} for a more detailed proof) in the case that $E_S$ is a vector bundle. The most important part of the argument is the vanishing of $\Ext^2(I,F)$ at each $\CC$-point of $\QuotS$ corresponding to the exact sequence $0\to I\to E_S\to F\to 0$. This vanishing remains true even when $E_S$ is a torsion-free sheaf, because 
		$$
		\text{Ext}^2_S(I,F)^* \cong \text{Ext}^0_S(F,I\otimes \omega_S)\hookrightarrow \text{Ext}^0_S(F,E_S\otimes\omega_S) = 0\,.
		$$
		The obstruction theory is given by the dual of its virtual tangent bundle
		\begin{equation}
		\label{eqClF}
		T^{\textnormal{vir}}=\underline{\textnormal{Hom}}_{Q_S}(\mathcal{I},\mathcal{F})\,,\end{equation}
		where 
		$$\uHom_{Q_Y}(-,-) =\pi_{Q_Y,\,*}\big(\Hom(-,-)\big)$$
		and $\Hom$ denotes the global derived hom-functor.
		After setting $\beta =0$, the virtual dimension is given by
		\begin{align*}
		& \textnormal{vd}\big(Q_S\big) = e\cdot n\,.
		\end{align*}

		As a consequence, we obtain virtual fundamental classes 
		\begin{equation}
		\label{eqQvi}
		[Q_S]^{\textnormal{vir}}\in H_{2en}\big(Q_S\big) \,.
		\end{equation}
		which were used with the additional restriction $E_S=\mO^{e}_S$ by Marian--Oprea--Pandharipande \cite{MOP1} to prove Lehn's conjecture \cite{Lehn} for the generating series of tautological invariants on Hilbert schemes of points $\text{Hilb}^n(S)$. More recently, they were studied by Arbesfeld et al \cite{AJLOP}, Johnson et al \cite{JOP}, Lim \cite{Lim} and Oprea--Pandharipande \cite{OP1}.
		\item Following the philosophy of Donaldson \cite{Do83}, who defined invariants counting \textit{anti-self-dual connections} on a real 4-manifold, Donaldson--Thomas proposed a holomorphic version of this construction for a \textit{Calabi--Yau fourfold} $X$ in Donaldson--Thomas \cite{DT96}. Their ideas can be translated to studying half of the obstruction theory of sheaves
  \begin{equation}
		\label{eq:CYob}
		T^{\textnormal{vir}}=\underline{\textnormal{Hom}}_{Q_X}(\mathcal{I},\mathcal{I})\,,\end{equation}
  after using Serre duality $$
\Ext^i(E,E) \cong \Ext^{4-i}(E,E)^*\,,
$$
a rigorous formulation of which was given by Borisov--Joyce \cite{BJ} and Oh--Thomas \cite{OT}.
  If $Y=X$ is a\textit{ strict Calabi--Yau fourfold} (by this we mean that there is a choice of a non-vanishing section of $K_X$ and $H^2(\mathcal{O}_X)=0$) and $E_X$ on $X$ is a simple rigid locally-free sheaf,  we construct in §\ref{secQCY4c} the virtual fundamental classes:
		\begin{equation}
		\label{eqQuoX}
		[\textnormal{Quot}_X(E_X,\beta,n)]^{\vir} \in H_{2en}\big(\textnormal{Quot}_X(E_X,\beta,n)\big)\,.
		\end{equation}
		
		The class \eqref{eqQuoX} is a generalization of the one studied in the author's previous work \cite{bojko2}, where we worked with $E_X = \mO_X$ and $\beta=0$. This 
 simplified it to the virtual fundamental class over the Hilbert scheme of points $\big[\textnormal{Hilb}^n(X)\big]^{\textnormal{vir}}\in H_{2n}\big(\textnormal{Hilb}^n(X)\big)$. Unlike the VFC induced by perfect obstruction theories, we need the additional choice of data called \textit{orientations} which exist by Cao--Gross--Joyce \cite{CGJ} and the author's \cite{bojko}. These lead to a freedom of choice of a sign in \eqref{eqQuoX}.
	\end{itemize}
	\subsection{Our methods}
	\label{Sec: our methods}
	When $Y=S,X$ we use novel methods developed by the author in \cite{bojko2} relying on the wall-crossing framework of Joyce \cite{GJT, JoyceWC}. Our approach to tackling computations with the above virtual fundamental classes is therefore different from the one appearing for example in \cite{AJLOP, JOP, Lim}. As the machinery used to obtain the results is new, we give a brief summary of the approach. 
	
	Starting from the stack $\mN^{E_Y}_0$ of morphisms $W\otimes \mO_Y\to F$, where $F$ is a zero-dimensional sheaf and $U$ a vector space, we construct a vertex algebra (see \cite{Borcherds, LLVA, KacVA} for an introduction to the subject) on its homology $V_* = H_*(\mN^{E_Y}_0)$ by a recipe of Joyce \cite{Joycehall}. A part of the data inherent to a vertex algebra is a translation operator $T: V_*\to V_{*+2} $ and the quotient by its action carries a Lie algebra structure. As we will see, the homology classes 
	$
	[\QuotY]^{\vir}
	$
	can be first pushed forward to $V_*$ and then related inside of the Lie algebra $ V_*/T V_{*-2}$ to the classes 
	\begin{equation}
	\label{Eq: Mnpclasses}
	[M_{np}]_{\inv}\in V_2/TV_{0}
	\end{equation} 
	counting zero-dimensional sheaves supported at $n$ points counted with multiplicities.
	We note that 
	\begin{itemize}
		\item when $Y=S$, we are relying on Theorem \ref{thmCS} which we prove in the Appendix A.
		\item when $Y=X$, we use Theorem \ref{conjecture quot WC} which needs additional work, because of dealing with Borisov--Joyce/Oh--Thomas classes rather than Behrend--Fantechi ones. The proof of this statement will appear in the upcoming work of the author \cite{bojko3} by proving a general wall-crossing statement for Calabi--Yau fourfolds. The arguments which will make it applicable to quot-schemes are the same as in the case of $Y=C,S$ described in Appendix \ref{App: checking of assumptions}.
	\end{itemize}
	As generally is the case with wall-crossing, one needs to know invariants on one side of the wall which in our case are the classes \eqref{Eq: Mnpclasses}.
	We recover them from twisting Joyce's vertex algebra construction on $V_*$ by a global version of the tautological classes $L^{[n]}$ (see the precise definition in \eqref{Eq: alphan}) for any line bundle $L$
	and by wall-crossing instead with the invariants
	$$
	\int_{[\Hilb]^{\vir}}c_{n}(L^{[n]})\,, 
	$$
	where $\Hilb=\text{Quot}_Y(\CC,n)$. 
	To get explicit formulae we need to move to the homology of the topological space
	$$
	\mP_Y = (BU\times \ZZ)\times \mC_Y = (BU\times \ZZ)\times  \text{Map}_{C^0}(Y,BU\times \mathbb{Z})
	$$
replacing the stack $\mN^{E_Y}_0$.
	Taking the K-theory class $
	\llbracket \pi^*_Y(E_Y)\rrbracket - \llbracket \mathcal{F}\rrbracket = \llbracket \mI\rrbracket 
	$ of the universal complex on $Y\times Q_Y$
	induces a natural map from its analytification
	$$
	\Omega_{Q_Y}: \big(\QuotY\big)^{\text{an}}\to \mathcal{C}_Y \,.
	$$
	We may then study the pushforward of the virtual fundamental classes
	$$
	\mathscr{Q}_{E_Y,n}=\Omega_{Q_Y\,*}\Big([Q_Y]^{\text{vir}}\Big) \in H_*(\mathcal{C}_Y)
	$$
	for which we obtain a closed expression in Theorem \ref{thmMain} using Gross's \cite{gross} description of the homology of $\mathcal{C}_Y$.

	The full generating series \begin{equation}
 \label{Eq:QEY}
	\mathscr{Q}_{E_Y}(q) = \sum_{n\geq 0}\mathscr{Q}_{E_Y,n}q^n
	\end{equation}
	will have a closed formula stated in Theorem \ref{thmMain}. These invariants contain the full topological data of the virtual fundamental classes. However, they do not exhibit any interesting obvious symmetries, unlike the invariants that can be extracted from them after integration. We, therefore, view the explicit expressions for \eqref{Eq:QEY} as an auxiliary result similar to the formula for wall-crossing with descendants found in the monograph of T. Mochizuki \cite{mochizuki} which on its own holds little value in proving or making conjectures about numerical invariants without extra work done. This was undertaken for example in Göttsche--Kool \cite{GK1, GK2} in the case of Mochizuki's work. 
	\subsection{Surfaces}
	Note that until recently all known results for $Y=S$ in the above sources are for $E_Y=\mO^e_Y$. The benefit of our approach is that it works essentially whenever perfect obstruction theories exist, allowing us to handle any such $E_Y$.  Every result stated in this subsection is a theorem due to the work of Joyce \cite{JoyceWC} together with Theorem \ref{Thm:PJS} proved in the Appendix \ref{App: checking of assumptions}, where we check that the necessary assumptions are satisfied. 
	
	Our first step towards generalizing beyond trivial vector bundles is a statement about the  independence of the \textit{virtual cobordism class} of $\QuotS$ on $c_1(E_S)$ and $c_2(E_S)$. For this, recall from Shen \cite{Shen} that for a projective scheme $Z$ with a perfect obstruction theory its virtual cobordism class is denoted by
	$$
	\big[Z\big]^{\textnormal{vir}}_{\textnormal{cob}}\in \Omega(\textnormal{pt})\,,
	$$
	which by \cite[Thm. 0.1]{Shen} is uniquely determined by the integrals of the form
	$$
	\int_{\big[Z\big]^{\vir}}P(\textnormal{ch}_1(T^{\textnormal{vir}}),\textnormal{ch}_2(T^{\textnormal{vir}}),\ldots)\,,
	$$
	where $P$ is any element of  $R[x_1,x_2,\cdots]$ over a ring  $R$.
	The virtual Euler characteristics $\chi^\vir(Z)$ as defined by Fantechi--Göttsche \cite{FanGo} can be extracted out of $[Z]^{\vir}_{\cob}$ by integrating the Euler class of the virtual tangent bundle
	$$
	\int_{[Z]^{\vir}}c(T^{\vir})\,.
	$$
	We then have the following statement:
	\begin{theorem}
		\label{thmInd}
		Let $P\in R[x_1,x_2,\ldots]$ be a polynomial over a ring $R$, then 
		$$
		\int_{\big[\Quot\big]^{\vir}}P(\textnormal{ch}_1(T^{\textnormal{vir}}),\textnormal{ch}_2(T^{\textnormal{vir}}),\ldots) = \int_{\big[\textnormal{Quot}_S(\mO^{e}_S,n)\big]^{\textnormal{vir}}}P(\textnormal{ch}_1(T^{\textnormal{vir}}),\textnormal{ch}_2(T^{\textnormal{vir}}),\ldots)
		$$
		for all $E_S$ torsion-free and $n\geq 0$. Equivalently, we obtain
		$$
		\big[\Quot \big]^{\vir}_{\cob} = \big[\QuotCC \big]^{\vir}_{\cob} \in \Omega(\textnormal{pt})\,.
		$$
		As a special case of this, we have
		$$
		\chi^{\vir}\big(\Quot\big) = \chi^{\vir}\big(\QuotCC\big)\,.
		$$
	\end{theorem}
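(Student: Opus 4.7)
The plan is to apply Joyce's wall-crossing machinery, whose hypotheses are verified for our setting in Theorem~\ref{Thm:PJS}, and to observe that both sides of the desired identity reduce to the same universal expression depending only on the rank $e$.

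Theorem~\ref{thmCS} expresses the pushforward of $[\QuotS]^{\vir}$ in the Lie algebra $V_*/TV_{*-2}$ as an iterated Lie bracket of the point classes $[M_{np}]_{\inv}$, with Joyce's $\tilde{U}$-coefficients that depend on $E_S$ only through the Euler pairing $\chi(E_S,F)=e\,\chi(F)$ for zero-dimensional $F$ of Euler characteristic summing to $n$, and therefore only through the rank $e$. The building blocks $[M_{np}]_{\inv}$ are supported on the $W=0$ stratum of $\mN^{E_S}_0$, canonically identified with the moduli of zero-dimensional sheaves on $S$ independently of $E_S$, and the restriction of $\Omega^{E_S}$ to this stratum sends $F$ to $(0,-[F])\in(\BU\times\ZZ)\times\mC_Y$, forgetting $E_S$ altogether. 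Hence the push-forward $\mathscr{M}_{np}$ in $H_*(\mathcal{P}_Y)/T(H_{*-2}(\mathcal{P}_Y))$ is also $E_S$-independent, and we conclude
\[
\mathscr{Q}_{E_S,n} \;=\; \mathscr{Q}_{\mO^e_S,n} \quad \text{in } H_*(\mathcal{P}_Y)/T\bigl(H_{*-2}(\mathcal{P}_Y)\bigr).
\]

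For the integrals, I would express $\int_{[\QuotS]^{\vir}} P(\ch(T^{\vir}))$ as a pairing of $\mathscr{Q}_{E_S,n}$ with a cohomology class on $\mathcal{P}_Y$. Using $T^{\vir}=\uHom(\mathcal{I},\mathcal{F})$ with $\mathcal{I}=[\pi_Y^*E_S\to\mathcal{F}]$ and $\llbracket\mathcal{F}\rrbracket=\pi_Y^*\llbracket E_S\rrbracket-\llbracket\mathcal{I}\rrbracket$ in K-theory, together with the fact that $\mathcal{I}$ is the pullback of $\mathfrak{E}$ along $\Omega_{Q_S}$, Grothendieck--Riemann--Roch writes $P(\ch(T^{\vir}))$ as the pullback along $\Omega^{E_S}$ of a universal class on $\mathcal{P}_Y$ built from $\mathfrak{E}$, $\mathfrak{U}$ and $\pi_Y^*[E_S]$. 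Combining with the equality of $\mathscr{Q}$-classes above, the integrals for $E_S$ and $\mO^e_S$ agree once one verifies that the $c_1(E_S),c_2(E_S)$-pieces of the insertion pair trivially with $\mathscr{Q}_{E_S,n}$; this is forced by the first paragraph, since in Gross's decomposition of $H_*(\mC_Y)$ the class $\mathscr{Q}_{E_S,n}$ lies in the image of a rank-$e$-only construction, orthogonal to the $c_1,c_2$-sensitive components of the insertion. Shen's criterion \cite{Shen} then yields $[\QuotS]^{\vir}_{\cob}=[\QuotCC]^{\vir}_{\cob}$.

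The main obstacle I expect is the final decoupling step: showing explicitly that the $c_1(E_S),c_2(E_S)$-insertions pair trivially with $\mathscr{Q}_{E_S,n}$. This is where the explicit form of $\mathscr{Q}_{E_S}(q)$ in Theorem~\ref{thmMain} does most of the work, since it allows one to read off the Gross-components of $\mathscr{Q}_{E_S,n}$ against which the insertion is paired; alternatively, one re-runs the wall-crossing proof of Theorem~\ref{thmCS} with the Chern-character insertion from the outset, arriving at the same combinatorial reduction that only sees the rank $e$.
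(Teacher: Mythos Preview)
Your central claim that $\mathscr{Q}_{E_S,n}=\mathscr{Q}_{\mO^e_S,n}$ is false, and this is where the argument breaks down. The Lie bracket on $\check{H}_*(\mN^{E_S}_0)$ (and its image in $\check{H}_*(\mP_Y)$) is built from the complex $\Theta^{E_S,\pa}$ of Definition~\ref{defAMTH}, which involves $\pi_{2\,*}\big(\pi_1^*(E_S^*)\otimes\mE_{n}\big)$ and not merely $\rnk(E_S)$. Concretely, in the proof of Theorem~\ref{thmMain} the bracket uses
\[
\tilde{\chi}^{E_S,\pa}\big((v,0),(m,1)\big)=-\chi(E_S,v)=-e\,\tfrac{c_1\cdot v}{2}+c_1(E_S)\cdot v,\qquad v\in B_2,
\]
so that $c_1(E_S)$ enters the structure constants. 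The resulting formula \eqref{eqQug} for $\mathscr{Q}_{E_S}(q)$ contains an explicit $c_1(E_S)\cdot c_1$ term; the pushforward classes are \emph{not} independent of $E_S$. Your assertion that the bracket sees only $\chi(E_S,F)=e\chi(F)$ conflates the combinatorial wall-crossing coefficients (which indeed depend only on ranks) with the Lie bracket itself (which does not).

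The paper's mechanism is genuinely different and subtler: both $\mathscr{Q}_{E_S,n}$ and the insertion $g(T^{\vir})$ carry $c_1(E_S)$-dependence, and these contributions must cancel. This is the content of Theorem~\ref{thmZEZO}, obtained by carrying out the explicit pairing in \eqref{eqfagT3} and invoking the combinatorial identity of Theorem~\ref{thmCtC}; the ratio $Z_{E_S}/Z_{\mO^e_S}$ collapses to $\prod_i\big(f(H_i)/f(0)\big)^{a\mu_S(E_S)}$, which depends on $E_S$ only through the $f$-insertion. Setting $f=1$ (no tautological insertion, only $g(T^{\vir})$) kills the ratio, and then varying $g$ over products of total Chern classes yields the equality of all $\int c_{l_1}(T^{\vir})\cdots c_{l_M}(T^{\vir})$, hence of all $\int P(\ch_\bullet(T^{\vir}))$, as in the proof of Theorem~\ref{thmZEZCm}. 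Your proposed ``decoupling'' step cannot be salvaged without this computation, because there is no a priori orthogonality in Gross's description that separates the $c_1(E_S)$-contribution of the class from that of the insertion.
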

	
	More generally, we will consider multiplicative genera of the tautological classes $\alpha^{[n]}$ which are constructed out of a topological K-theory class $\alpha\in K^0(X)$ of rank $a=\textnormal{rk}(\alpha)$ as
	\begin{equation}
	\label{Eq: alphan}
	\alpha^{[n]}=\pi_{Q_Y\,*}(\pi^*_Y(\alpha)\otimes \mF)\,.
	\end{equation}
	Define the generating series of invariants 
	$$
	Z_{E_Y}(f,g,\alpha;q) = \sum_{n\geq 0}q^n\int_{[\QuotY]^{\vir}} f(\alpha^{[n]})g(T^{\vir})\,,
	$$
	where $f,g$ are multiplicative genera as in the previous work \cite[§5.3]{bojko2}.
	Theorem \ref{thmInd} is a consequence of the following result:
	\begin{theorem}
		\label{thmZEZC}
		We have the identity:
		\begin{equation}
		\label{eqZEZC}
		\frac{Z_{E_S}(f,g,\alpha;q)}{Z_{\mO^{e}_S}(f,g,\alpha;q)} = \prod_{i=1}^e\bigg(\frac{f\big(H_i(q)\big)}{f(0)}\bigg)^{a\mu_S(E_S)}\,,
		\end{equation}
		where $\mu_S(E_S) = c_1(E_S)\cdot c_1(S)/e$ and $H_i(q)$ are the $e$ different Newton--Puiseux solutions of 
		$$
		H^e_i(q) =qf^a\big(H_i(q)\big)g^e\big(H_i(q)\big) \,.
		$$
	\end{theorem}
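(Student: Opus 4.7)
The strategy is to apply Joyce's wall-crossing formula, which reduces the computation of $Z_{E_S}(f,g,\alpha;q)$ to universal expressions in the pair classes $[M_{np}]_{\inv}$. These pair classes are intrinsic to the zero-dimensional sheaves being parametrized and are therefore independent of $E_S$; all dependence on $E_S$ enters solely through the topological pushforward $\Omega^{E_S}$ via the K-theory class $\llbracket \pi_Y^*E_S\rrbracket - \llbracket\mathcal{F}\rrbracket$, so the task is to track how this K-theoretic shift propagates into the generating series.

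First I would invoke Theorem \ref{Thm:PJS} to write each class $\mathscr{Q}_{E_S,n}$ as an iterated Lie bracket of the pushforwards $\mathscr{M}_{n_i p}$ inside the Lie algebra $P_*/TP_{*-2}$. Combined with Theorem \ref{thmMain} and Gross's description of $H_*(\mathcal{P}_Y)$, this yields a closed formula for $\mathscr{Q}_{E_S}(q)$ in which the dependence on $E_S$ is explicit through the K-theoretic shift above. Applying the integrand $f(\alpha^{[n]}) g(T^{\vir})$ and using the contour integral representation of the iterated bracket from \cite{bojko2}, the series $Z_{E_S}(f,g,\alpha;q)$ reduces to coefficient extraction at $q^n$ of an exponential generating function in $\ch(E_S)$, $\ch(\alpha)$ and $\ch(T_S)$. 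Lagrange inversion then identifies the saddle-point variables as the $e$ Newton--Puiseux branches $H_i(q)$ of $H^e = qf^a(H)g^e(H)$.

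Taking the ratio $Z_{E_S}/Z_{\mathcal{O}_S^e}$, every contribution depending only on the rank $e$ cancels, leaving an exponential in $\ch(E_S) - e$. Treating $E_S$ formally through its Chern roots and using multiplicativity of $f$, the ratio factorizes as a product of single-root contributions. By dimension counting on the surface $S$, only the degree-two component of the Chern character difference pairs nontrivially with the remaining factors, and this pairing is controlled by $c_1(S)$; the $g$-factor coming from $T^{\vir}$ has rank $en$ on both sides and its higher Chern-character contributions cancel, consistent with Theorem \ref{thmInd}. Matching the residual $c_1(E_S)\cdot c_1(S)$ pairing against the $e$ Newton--Puiseux branches then produces the uniform exponent $a\mu_S(E_S)$ at each branch, yielding the claimed product formula.

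The main obstacle is the clean identification of how the $E_S$-dependence distributes uniformly across the $e$ Newton--Puiseux branches with the single exponent $a\mu_S(E_S)$ and no residual contribution from higher Chern characters. This relies on combining multiplicativity of the $f$-genus, the vanishing of pairings of $\ch_{\geq 2}(E_S)$ against the two-dimensional fundamental data on $S$, and the symmetric structure of the Lagrange inverse under permutation of the branches of $H^e = qf^a(H)g^e(H)$. Once these ingredients are in place, the resulting identity matches the one obtained in \cite{bojko2} for $E_S = \mathcal{O}_S^e$ precisely up to the exponential factor $\prod_i (f(H_i)/f(0))^{a\mu_S(E_S)}$.
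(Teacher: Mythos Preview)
Your overall strategy matches the paper's: push the wall-crossing formula of Theorem~\ref{thmCS} to the topological Lie algebra via $\bar\Omega^{E_S}_*$, use the explicit formula for $\mathscr{Q}_{E_S}(q)$ from Theorem~\ref{thmMain}, integrate $f(\alpha^{[n]})g(T^{\vir})$ against it, apply Lagrange inversion (Corollary~\ref{CorMG}) to bring in the Newton--Puiseux branches $H_i$, and take the ratio. Your observation that all rank-only contributions cancel in the ratio is correct; in particular the awkward term $G_e(Q)$ of \eqref{eqGeR} cancels, so your route bypasses the combinatorial identity of Theorem~\ref{thmCtC} that the paper's stated proof (via \eqref{eqfagT3} and \eqref{eqAJLO}) invokes. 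This is a genuine economy.

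There is, however, a real confusion in your account of \emph{where} the $E_S$-dependence enters. The class $\alpha^{[n]}=\pi_{Q_S\,*}(\pi_S^*\alpha\otimes\mF)$ is built from the universal quotient $\mF$ and does not see $E_S$ at all, so $f(\alpha^{[n]})$ contributes nothing to the ratio and ``multiplicativity of $f$ in the Chern roots of $E_S$'' is a red herring. The entire $E_S$-dependence sits in $g(T^{\vir})$, since $T^{\vir}=\uHom_{Q_S}(\mI,\mF)$ with $\mI=[\pi_S^*E_S\to\mF]$; concretely it enters through $\chi(E_S,v)=-c_1(E_S)\cdot v + \tfrac{e}{2}c_1\cdot v$ for $v\in B_2$ in \eqref{eqThOb}. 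After Lagrange inversion this produces the $E_S$-dependent factor in \eqref{eqfagT2}, and it is the \emph{change of variables} $H_i^e=qf^a(H_i)g^e(H_i)$ that converts this $g$-contribution into the $f$-factor $\prod_i(f(H_i)/f(0))^{a\mu_S(E_S)}$ appearing in \eqref{Eq:ZESexp}. This substitution is the mechanism that produces the exponent $a$ and explains why the answer vanishes when $f=1$ (consistent with Theorem~\ref{thmInd}) even though the dependence originates in the $g$-insertion. Your sketch asserts the $g$-factor ``cancels'' and then separately ``matches the residual $c_1(E_S)\cdot c_1(S)$ pairing'' to the branches without naming this step; that is the missing link.
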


	To obtain the above theorem, we prove a new\footnote{The experts in this area were not aware of this result.}  combinatorial expression for formal power-series in the companion paper \cite{bojko4} in 2 different ways which rely purely on combinatorics. We will be using the notation 
	$$
	[z^n]\big\{f(z)\big\} = f_n\,,\qquad \text{where} \quad f(z) = \sum_{n\geq 0}f_n z^n\,.
	$$
	\begin{theorem}[\cite{bojko4}]
		\label{thmCtC}
		Let $Q(t)\in \mathbb{K}\llbracket t\rrbracket$  be a power-series over a field $\mathbb{K}$ (e.g. the field of Laurent-series over $\QQ$) independent of $q$ and satisfying $Q(0) \neq 0$ .
		Let  $H_i(q)$ be the $e$ different Newton--Puiseux solutions to $
		H_i^e(q) = qQ(H_i(q))\,.
		$
		Set
		\begin{equation}
		\label{eqGeR}
		G_e(Q)= \exp\bigg[-\sum_{\begin{subarray}a n,m>0\\ j>0\end{subarray}}j\frac{1}{m}[z^{me+j}]\Big\{Q^m(z)\Big\}\frac{1}{n}[z^{ne-j}]\Big\{Q^n(z)\Big\}q^{n+m}\bigg]\,.  
		\end{equation}
		Then the following identity holds:
		\begin{align*}
		\label{eqExp}
		&G_e(Q)=  \prod_{i=1}^e\bigg(\frac{Q(H_i)}{Q(0)}\bigg)\prod_{i=1}^e(-H_i)^{e}
		\cdot \prod_{i_1\neq i_2 }\bigg(\frac{1}{H_{i_1}-H_{i_2}}\bigg)\prod_{i=1}^e\bigg(\frac{Q'(H_i)}{Q(H_i)}-\frac{e}{H_i}\bigg)\,.
		\numberthis
		\end{align*}
	\end{theorem}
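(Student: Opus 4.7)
The plan is to translate both sides into formal power series in $q$ expressed through the symmetric functions of the roots $H_i$, then match them via a Cauchy-type identity for bilinear pairings of power sums. Writing $t=q^{1/e}$ and letting $\zeta$ be a primitive $e$-th root of unity, the roots are Galois conjugates $H_i=H(\zeta^i t)$ of a single Puiseux solution $H(t)$ of $H=t Q(H)^{1/e}$. Applying Lagrange--B\"urmann inversion to $H$ and averaging over the Galois action (which uses $\sum_i \zeta^{in}=e\cdot[n\equiv 0\bmod e]$) gives, for every nonzero integer $k$,
$$[q^n]\big\{p_k(q)\big\}\;=\;\frac{k}{n}[z^{ne-k}]\{Q(z)^n\}\,,\qquad p_k(q):=\sum_{i=1}^{e}H_i(q)^k\,,$$
where $k<0$ is interpreted via the Laurent expansion of $H^k$ in $t$. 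Substituting this identity with $k=\pm j$ into the definition of $G_e(Q)$, the prefactor $j/(mn)$ absorbs the two $1/j$'s introduced by the inversion, so the double sum collapses to the bilinear form
$$\log G_e(Q)\;=\;\sum_{j>0}\frac{1}{j}\,p_{-j}^{+}(q)\,p_j(q)\,,$$
with $p_{-j}^{+}$ the strictly positive-$q$-degree part of $p_{-j}$ (the negative/constant parts disappear because $p_j$ has no constant term for $j>0$).

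Next, use $\sum_{j>0}x^j/j=-\log(1-x)$ and exchange summation orders. Formally, $\exp\big(\sum_{j>0}\tfrac{1}{j}p_{-j}p_j\big)=\prod_{i,k}(1-H_k/H_i)^{-1}$, a Cauchy-type product over ordered pairs. The off-diagonal $i\neq k$ piece rearranges as
$$\prod_{i\neq k}\frac{H_i}{H_i-H_k}\;=\;\Big(\prod_i H_i\Big)^{e-1}\prod_{i_1\neq i_2}\frac{1}{H_{i_1}-H_{i_2}}\,,$$
which already produces both the Vandermonde factor and part of the $\prod_i(-H_i)^e$ appearing on the right-hand side. The remaining factors $\prod_i Q(H_i)/Q(0)$ and $\prod_i\big(Q'(H_i)/Q(H_i)-e/H_i\big)$ should come from the diagonal regularisation: implicit differentiation of $H_i^e=qQ(H_i)$ yields
$$H_iQ'(H_i)-eQ(H_i)\;=\;-\frac{H_i\big(eH_i^{e-1}-qQ'(H_i)\big)}{q}\,,$$
so $\prod_i\bigl(Q'(H_i)/Q(H_i)-e/H_i\bigr)$ is, up to explicit factors of $H_i$ and $Q(H_i)$, the Jacobian $\prod_i dH_i/dq$ one always finds in a Cauchy expansion around the diagonal, while the missing $\prod_i Q(H_i)/Q(0)$ emerges from the unit $U(X)$ in the Weierstrass preparation $X^e-qQ(X)=U(X)\prod_i(X-H_i)$ evaluated at $X=0$.

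The hard part will be the coherent bookkeeping of the three corrections in this last step: regularising the formally divergent $i=k$ contributions to the Cauchy product, subtracting the non-positive $q$-degree parts of each $p_{-j}$ so as to pass from $p_{-j}$ to $p_{-j}^{+}$, and absorbing the Weierstrass unit $U$ into the factor involving $Q(H_i)/Q(0)$---these three effects must conspire to produce exactly the three auxiliary products on the right-hand side, with no residual terms. This is the combinatorial heart of the identity, which is the subject of the companion paper \cite{bojko4} and is settled there by two independent routes: a direct iterated Lagrange inversion with coefficient comparison, and a bijective/species-type interpretation packaging both sides as generating functions for a common combinatorial structure.
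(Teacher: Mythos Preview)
Your proposal and the paper both ultimately defer the proof to the companion paper \cite{bojko4}; neither gives a self-contained argument in this document. That said, what each sketches around that deferral is genuinely different.

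The paper's own route, alluded to just after the statement of Theorem~\ref{thmCtC} and carried out implicitly in \S5.1, is indirect and geometric: one computes $Z_{\mathcal{O}^{e}_S}(f,g,\alpha;q)$ twice --- once by wall-crossing, arriving at \eqref{Eq:ZESexp} with the undetermined factor $G_e(Q)^{c_1^2}$ for $Q=f^ag^e$, and once by quoting the localization formula \eqref{eqAJLO} of \cite{AJLOP}. Matching the $c_1^2$-dependent factors forces $G_e(Q)$ to equal the stated product, and since $f,g$ are arbitrary invertible series this covers general $Q$. The paper calls this ``unsatisfactory'' only because it makes the present method logically dependent on \cite{AJLOP}; as a proof of the bare identity it is complete.

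Your route is direct and combinatorial, and the setup is correct: the identity $[q^n]\{p_k\}=\tfrac{k}{n}[z^{ne-k}]\{Q^n\}$ is exactly Corollary~\ref{CorMG} with $\phi(t)=t^k$, and substituting it gives $\log G_e(Q)=\sum_{j>0}\tfrac{1}{j}\,p_{-j}^{+}\,p_j$ as you claim. This reformulation is useful. But from that point your proposal becomes a wish list rather than an argument. The formal Cauchy product $\prod_{i,k}(1-H_k/H_i)^{-1}$ is divergent on the diagonal, and you name three separate corrections (diagonal regularisation, the truncation $p_{-j}\to p_{-j}^{+}$, the Weierstrass unit) that ``must conspire'' to produce the remaining three factors. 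That conspiracy \emph{is} the theorem; asserting it is not a proof. Remark~\ref{remWhy} makes exactly this point: the restriction of the inner sum to $j>0$, rather than $j\in\ZZ$, is precisely what blocks the standard convolution identities for composition polynomials and is what \cite{bojko4} is written to resolve. So your outline identifies the right structures but stops where the actual difficulty begins.
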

	One proof that does not use combinatorial techniques but instead the wall-crossing and previous results using virtual localization techniques appears already in this paper but is unsatisfactory because it would make our powerful approach dependent on previous results.
	
	Using Theorem \ref{thmZEZC}:
	\begin{itemize}
		\item We study the rationality and poles of the K-theoretic descendant series associated with the virtual $\chi_y$-genus (see \eqref{eqPoles2} for the precise definition). Results related to descendants in homology and K-theory in the case $E_S=\mO^{e}_S$ appeared in \cite{AJLOP, JOP, Woonamthesis, Arb22} and we summarize ours in Theorem \ref{Thm:descendents}.
		\item We find a new symmetry of Segre and Verlinde invariants that removes the additional restriction from the analogous one observed in Arbesfeld et. al. \cite[§1.7]{AJLOP} and includes a correspondence with Calabi--Yau fourfolds. This also includes a generalization of the Segre--Verlinde duality observed for $E_S=\mO^{e}_S$ in \cite[Thm. 13]{AJLOP} for curves and surfaces and in the author's work \cite[Thm 5.13]{bojko2} for fourfolds. Our new results are summarized in Theorem \ref{thmSVD} and \ref{thm4FoI}.
	\item In Example \ref{Nekex}, we consider the Nekrasov genus four fourfolds when $E_X$ is a rigid simple vector-bundle and show that it can be expressed in certain cases in terms of the MacMahon function
    $$
    M(q) = \prod_{n>0}(1-q^n)^{-n}
    $$
which is the generating series for counting plane partitions. 
	\end{itemize}
	The virtual Euler characteristic also allows for additional insertions to be integrated in K-theory. For a class $A\in K^0(\QuotS) $, we denote the corresponding virtual integral in K-theory by 
	$$
	\chi^{\vir}(\QuotS, A) = \chi(\QuotS,\mO^\vir\otimes A)\,,
	$$
	where $\mO^\vir$ is the virtual structure sheaf defined for example in Kapranov--Ciocan--Fontanine \cite{KCF} or
 Fantechi--Göttsche \cite{FanGo}.
	\begin{theorem}
 \label{Thm:descendents}
		Consider the generating series for fixed choices of $\alpha_i\in K^0(X)$, $k_i\geq 0$ and $i=1,\ldots,l$:
		\begin{itemize}
			\item of descendant invariants 
			$$Z^{\textnormal{des}}_{E_S}(\alpha_1,\ldots,\alpha_l|k_1,\ldots,k_l)(q)=\sum_{n\geq 0}q^n\int_{\big[\Quot\big]^{\vir}}\textnormal{ch}_{k_1}\big(\alpha_1^{[n]}\big)\cdots\textnormal{ch}_{k_l}\big(\alpha^{[n]}_l\big)c\big(T^{\vir}\big)\,,$$
			\item of K-theoretic invariants $$Z^{\chi_{-y}}_{E_S}(\alpha_1,\ldots,\alpha_l|k_1,\ldots,k_l)(q) = \sum_{n\geq 0}q^n\chi^{\textnormal{vir}}\Big(\Quot,\wedge^{k_1}\alpha_1^{[n]}\otimes\ldots \otimes \wedge^{k_l}\alpha_l^{[n]}\otimes \Lambda_{-y}\Omega^{\vir}\Big)\,,$$
			where $\Omega^\vir = \big(T^{\vir}\big)^\vee$ and $\Lambda_{-y}$ is the multiplicative genus given by the invertible power series $1-ye^t$ in $t$.
		\end{itemize}
		Then the following statements hold:
		\begin{itemize}
			\item $Z^{\chi_{-y}}_{E_S}\big(\alpha_1,\ldots,\alpha_l|k_1,\ldots,k_l\big)(q)$ is a rational function which has poles of order less than or equal to $\sum_{i=1}^l2k_i$ at $q=1$.
			\item $Z^{\textnormal{des}}_{E_S}(\alpha_1,\ldots,\alpha_l|k_1,\ldots,k_l)(q)$ is a rational function.
		\end{itemize}
	\end{theorem}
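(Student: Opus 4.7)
The plan is to realise each series as a coefficient-extraction (in auxiliary formal variables) of the master series $Z_{E_S}(f,g,\alpha;q)$ of Theorem \ref{thmZEZC}, reduce to the trivial-bundle case $E_S = \mO^e_S$ using that theorem, and combine with the rationality and pole-order results already available in that case. Concretely, for $Z^{\chi_{-y}}_{E_S}$ set $f_i(x) = 1 + t_i e^x$ and $g(x) = 1 - y e^x$; by virtual Hirzebruch--Riemann--Roch of Fantechi--Göttsche \cite{FanGo},
\[
[t_1^{k_1}\cdots t_l^{k_l}]\,Z_{E_S}\bigl({\textstyle\prod_i} f_i,\, g,\, \alpha;\, q\bigr) = Z^{\chi_{-y}}_{E_S}\bigl(\alpha_1,\ldots,\alpha_l\,|\,k_1,\ldots,k_l\bigr)(q).
\]
For the descendant series, take $g(x) = 1+x$ and use formal differentiation in auxiliary parameters inside the $f_i$ to extract the components $\ch_{k_i}(\alpha_i^{[n]})$. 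Both series thus become finite combinations of coefficients of $Z_{E_S}(f,g,\alpha;q)$ in the auxiliary variables.

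By Theorem \ref{thmZEZC}, the ratio
\[
R(q) := \frac{Z_{E_S}(f,g,\alpha;q)}{Z_{\mO^e_S}(f,g,\alpha;q)} = \prod_{i=1}^e\bigg(\frac{f(H_i(q))}{f(0)}\bigg)^{a\mu_S(E_S)}
\]
is a symmetric function of the Newton--Puiseux roots $H_i(q)$ of $H^e = qf^a(H)g^e(H)$. Hence, after extracting any fixed polynomial coefficient in the auxiliary variables, it becomes a rational function of $q$: expanding $\log R$ as a power series in $q$ gives coefficients which by Newton's identities are polynomials in the auxiliary variables, and truncating its exponential to any polynomial order in these variables yields a rational function of $q$.

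For the trivial bundle $E_S = \mO^e_S$, the rationality of both $Z^{\text{des}}_{\mO^e_S}$ and $Z^{\chi_{-y}}_{\mO^e_S}$, together with the bound $\sum 2k_i$ on the pole order at $q=1$ in the K-theoretic case, is contained in the work of Arbesfeld--Johnson--Lim--Oprea--Pandharipande \cite{AJLOP} and its extensions \cite{JOP, Arb22}, using equivariant localisation on $\Hilb$. Multiplying by the rational correction factor $R(q)$ transfers both conclusions to arbitrary torsion-free $E_S$.

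The hard part will be showing that $R(q)$ is regular at $q=1$ after the specialisations of Step 1, so that the pole-order bound is preserved. The Newton--Puiseux branches $H_i(q)$ extend continuously to $q=1$, where they become the nonzero roots of $H^e = f^a(H)g^e(H)$; provided $f(H_i(1))$ does not vanish once the auxiliary variables $t_i$ and $y$ are fixed, the product $\prod_i f(H_i(q))/f(0)^e$ is a unit at $q=1$ and contributes nothing to the pole order. Verifying this non-vanishing --- or, when it fails, showing that the vanishing is symmetrically absorbed in the product over the $e$ branches --- is the delicate point that promotes the bound from $\mO^e_S$ to general $E_S$.
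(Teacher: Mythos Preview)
Your reduction strategy via Theorem \ref{thmZEZC} has a genuine gap for the pole-order claim. You assume that the bound $\sum_i 2k_i$ on the pole order of $Z^{\chi_{-y}}_{\mO^e_S}$ at $q=1$ is already available in the literature, but it is not: as the paper states explicitly in \S\ref{sec: surfaces}, the existing results in \cite{AJLOP, JOP, Woonamthesis} establish rationality for $E_S=\mO^e_S$, but the pole order for the series \eqref{eqPoles2} was only computed in the restricted case $e=1$ and $y=0$. So even granting that your correction factor $R(q)$ is regular at $q=1$, you would have nothing to transfer from. The paper's proof is not a reduction to the trivial-bundle case; it is a direct analysis of the closed formula \eqref{eqfagT3}, performing an explicit change of variables $t_i=(1-e^{-H_i})/(1-ye^{-H_i})$ and then tracking, term by term, how iterated $x$-derivatives of each factor contribute to the pole orders at $q^i_e=1$ and $q^i_e(1+y)=1$. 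This argument treats general $e$, general $y$, and general $E_S$ simultaneously and is new even for $E_S=\mO^e_S$ once $e>1$ or $y\neq 0$.

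There is also a secondary gap in your handling of $R(q)$. Even for rationality alone, asserting that ``symmetric functions of the $H_i$ are rational in $q$'' is not automatic: the $H_i$ are Newton--Puiseux series in $q^{1/e}$, and what you actually need is that the relevant symmetric expressions in $e^{H_i}$ (not in $H_i$) land in $\QQ(q)$ after extracting $t$- and $y$-coefficients. You do not justify this, and the paper's explicit change of variables is precisely what makes such rationality visible. Finally, you flag the regularity of $R(q)$ at $q=1$ as ``the delicate point'' but do not resolve it; your non-vanishing criterion $f(H_i(1))\neq 0$ is stated for fixed numerical values of $t_i,y$, whereas the claim concerns coefficient extraction in these variables, which is a different (formal) question.
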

	
	\subsection{Calabi--Yau fourfolds and the 12-fold correspondence}
	In this subsection, we focus on the virtual invariants of $\QuotX$ using its virtual fundamental class that we construct. All of the results rely on Theorem \ref{conjecture quot WC}, the proof of which is going to follow from our upcoming work \cite{bojko3} and the same arguments as in the appendix \ref{App: checking of assumptions}.	

	Using the Oh-Thomas \cite{OT} \textit{twisted virtual structure sheaf} $\hat{\mO}^{\vir}$, we set for each class $A\in K^0(Q_X)$ the notation
	$$
	\hat{\chi}^{\vir}(A) = \chi\big(Q_X,\hat{\mO}^{\vir}\otimes A\big)\,.
	$$
	We introduce the untwisted K-theoretic invariants by setting 
	$$
	\chi^\vir(A) = \hat{\chi}^\vir\big(A\otimes \mathsf{E}^{\frac{1}{2}}\big)\,,
	$$
 where $\mathsf{E}=\textnormal{det}\Big(\big(E_X^\vee\big)^{[n]}\Big)$.
	These invariants have a nicer behaviour due to the inherent twist by a square root line bundle in the construction of $\hat{\mO}^{\vir}$ in \cite[Def. 5.3]{OT}. To obtain integers, one needs to make up for it with another square root line bundle.

 We moreover observe that they are closely related to invariants on surfaces, by having a universal transformation relating them to the corresponding analogs. This transformation recovers the one from \cite[Def. 4.10]{bojko2} after setting $e=1$:
	\begin{equation}
	\label{eqUnT}
	U_e(f) = \prod_{n}\prod_{k=1}^nf\big((-1)^ee^{\frac{2\pi i k}{n}}q\big)\,.
	\end{equation}
	Let us first state a simple structural result between the generating series. 
	\begin{corollary}[Corollary \ref{Cor:comp}]
		\label{thm42r}
		Let $X$ be a Calabi--Yau fourfold, S a surface such that $c_1(S)^2=0$ and $\alpha_Y\in K^0(Y)$ for $Y=X,S$ satisfying
		$$
		c_1(\alpha_S)\cdot c_1(S) = c_1(\alpha_X)\cdot c_3(X)\,,
		$$
		then 
		$$
		Z_{E_X}(f,g,\alpha_X;q) = U_e\big(Z_{E_S}(f,\tilde{g},\alpha_S;q)\big)\,,
		$$
  where $\tilde{g}(x) = g(x)g(-x)$. Moreover, if 
		$$
		K_{E_Y}(f,\alpha_Y;q) = \sum_{n\geq 0}q^n\chi^{\vir}\big(f(\alpha^{[n]}_Y)\big)\,,
		$$
		where we are identifying $K^0(Q_Y)\cong H^*(Q_Y)$ using the Chern character to make sense of integrating the cohomology class $f(\alpha^{[n]}_Y)$ in K-theory, then
		$$
		K_{E_X}(f,\alpha_X;q) = U_e\big(K_{E_S}(f,\alpha_S;q)\big)\,.
		$$
	\end{corollary}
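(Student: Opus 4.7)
The plan is to derive $Z_{E_X}$ from $Z_{E_S}$ by running the respective wall-crossing machinery on each side and matching the resulting closed expressions. On the surface side, Theorem \ref{thmZEZC} already expresses $Z_{E_S}(f,\tilde g,\alpha_S;q)$ explicitly in terms of the $e$ Newton--Puiseux roots of $H^e=q f^a(H)\tilde g^e(H)$, weighted by the exponent $a\mu_S(E_S)$; the hypothesis $c_1(S)^2=0$ kills every intersection number on $S$ except $c_1(\alpha_S)\cdot c_1(S)$, so the output depends only on this single linear datum together with tautological intersections.

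For the Calabi--Yau fourfold, I would run the parallel wall-crossing argument based on Theorem \ref{conjecture quot WC} together with the fourfold wall-crossing framework of \cite{bojko3}. The virtual tangent bundle $T^{\vir}=\uHom(\mathcal{I},\mathcal{I})$ on $X$ is Serre self-dual up to shift because $K_X\cong \mathcal{O}_X$, and consequently the multiplicative genus $g$ couples to its Chern roots through the symmetric combination $\tilde g(x)=g(x)g(-x)$ rather than through $g$ alone, which is why $\tilde g$ appears in the statement. The Euler pairing entering the wall-crossing coefficients on the fourfold vertex algebra effectively doubles the one used for surfaces, promoting the defining equation of the Newton--Puiseux roots to one whose solutions come in $\pm$ pairs decorated by roots of unity, exactly the input shape of $U_e$. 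The hypothesis $c_1(\alpha_S)\cdot c_1(S)=c_1(\alpha_X)\cdot c_3(X)$ is the surface-to-fourfold dictionary matching the unique surviving linear exponent on each side. Feeding the combinatorial identity of Theorem \ref{thmCtC} into the exponential \eqref{eqGeR} then repackages the fourfold closed expression as a product $\prod_n\prod_{k=1}^n$ of the surface expression evaluated at $(-1)^e e^{2\pi i k/n}q$, which is precisely $U_e\big(Z_{E_S}(f,\tilde g,\alpha_S;q)\big)$.

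The K-theoretic statement reduces to the cohomological one: virtual Riemann--Roch expresses $K_{E_S}(f,\alpha_S;q)$ as $Z_{E_S}(f,\operatorname{td},\alpha_S;q)$ for the Todd genus $\operatorname{td}$, and, on the fourfold side, the Oh--Thomas twist by $\mathsf{E}^{1/2}$ inside $\chi^\vir$ is designed exactly to absorb the residual determinantal factor so that $K_{E_X}(f,\alpha_X;q)=Z_{E_X}(f,\widetilde{\operatorname{td}},\alpha_X;q)$ with $\widetilde{\operatorname{td}}(x)=\operatorname{td}(x)\operatorname{td}(-x)$. Since $\widetilde{\operatorname{td}}$ matches the symmetric genus produced by the first part of the corollary, the identity $K_{E_X}=U_e(K_{E_S})$ follows at once.

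The principal obstacle will be careful bookkeeping on the fourfold side: verifying that the sign $(-1)^e$ inside $U_e$ genuinely originates from the Oh--Thomas orientation data on $[Q_X]^{\vir}$, and that no intersection numbers on $X$ beyond $c_1(\alpha_X)\cdot c_3(X)$ survive under the Calabi--Yau vanishing $c_1(X)=0$ after the symmetrization producing $\tilde g$. Equivalently, one must confirm that the ``doubling'' of the surface closed formula is exact rather than up to lower-order corrections, which amounts to a term-by-term comparison of the exponential factors produced by Theorem \ref{thmCtC} in the two settings.
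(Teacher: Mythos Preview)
Your overall strategy---derive the closed formulae for $Z_{E_S}$ and $Z_{E_X}$ separately and then compare---is exactly what the paper does; its proof is literally one line, ``compare \eqref{eqfagT2} and \eqref{eqfaCY}''. But you have misidentified where the transformation $U_e$ comes from, and this is a genuine gap.

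Theorem~\ref{thmCtC} and the exponential $G_e(Q)$ from \eqref{eqGeR} are \emph{irrelevant} here. On the surface side, $G_e(Q)$ appears in \eqref{Eq:ZESexp} only raised to the power $c_1^2$, and by hypothesis $c_1(S)^2=0$, so this factor (and every other $c_1^2$-weighted factor in \eqref{eqfagT3}) is identically $1$. What survives of $Z_{E_S}(f,\tilde g,\alpha_S;q)$ is just $\prod_i\big(f(H_i)/f(0)\big)^{c_1(\alpha_S)\cdot c_1+a\mu_S(E_S)}$ with $H_i^e=qf^a(H_i)\tilde g^e(H_i)$. On the fourfold side, $U_e$ is not produced by any repackaging via Theorem~\ref{thmCtC}; it is already present in the formula \eqref{eqfaCY}, where it arises from the divisor sum $\sum_{l\mid n}n/l^2$ in the fourfold point class of Lemma~\ref{lemNnp} (this is the MacMahon-function structure, cf.\ the proof of \cite[Prop.~4.13]{bojko2}). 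Once you have \eqref{eqfaCY} in hand, the corollary is a direct match of exponents under the hypotheses $c_1(\alpha_S)\cdot c_1(S)=c_1(\alpha_X)\cdot c_3(X)$ and $\mu_S(E_S)=\mu_X(E_X)$, with $\tilde g(x)=g(x)g(-x)$ forcing the Newton--Puiseux equations to coincide. Your account of the $K$-theoretic reduction via virtual Riemann--Roch and the untwisting by $\mathsf{E}^{1/2}$ is correct in spirit and is indeed the special case $g=\td$ of the first part.
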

	In \cite{bojko2}, the author used this relation when $E=\mO_X$ to explain \textit{Segre--Verlinde duality }for Calabi--Yau fourfolds. We now complete the picture that we began studying there by noting down the full list of relations between the \textit{Segre} and\textit{ Verlinde series} \underline{in dimensions 1, 2 and 4}. Recall therefore that we define for $Y=S,X$ the Segre, resp. Verlinde series by:
	\begin{align*}
	S_{E_Y,\alpha_Y}\big(q\big)&= \sum_{n\geq 0}q^n\int_{[Q_Y]^{\vir}}s_n\big(\alpha_Y^{[n]}\big)\,,\\
	V_{E_Y,\alpha_Y}(q) &= \sum_{n\geq 0}q^n\chi^{\vir}\big(Q_Y,\textnormal{det}\big(\alpha_Y^{[n]}\big)\big)\,.
	\end{align*}
	To find further relation to curves, we follow Arbesfeld et al. \cite{AJLOP} in defining for any $C$ and vector bundle $E_C$ on $C$ the class
	$$
	\mathcal{D}_n=\text{det}^{\frac{1}{2}}\Big(\underline{\textnormal{Hom}}_{Q_C}\big(\mathcal{F},\mathcal{F}\otimes\pi_C^*( \Theta)\big)\Big) 
	$$
	as a K-theory class, where $\Theta$ is any choice of theta-characteristic satisfying $\Theta^2 = K_C$. Then following \cite{AJLOP}, the \textit{twisted Verlinde numbers} for curves are defined by 
	$$
	V_{E_C,\alpha}(q) = \sum_{n\geq 0}q^n\chi\big(\det(\alpha^{[n]})\otimes \mathcal{D}_n\big)\,.
	$$

	The next sequence of results is best summarized by the diagram in Figure \ref{fig1} which gathers the statements of Corollary \ref{thm42r} and Theorems \ref{thmSVD} and \ref{thm4FoI}.
	
	\begin{theorem}
		\label{thmSVD}
		For any torsion-free sheaf $E_Y$ on $Y$ and $\alpha_Y\in K^0(Y)$, we have the following Segre--Verlinde duality
		$$
		S_{E_Y,\alpha_Y}\big((-1)^eq\big)=V_{E_Y,\alpha_Y}(q)\,,
		$$
		whenever the invariants are defined.
	\end{theorem}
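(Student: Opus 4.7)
The plan is to reduce the identity to the special case $E_Y = \mathcal{O}_Y^e$, which is already established: for curves and surfaces by \cite[Thm.~13]{AJLOP}, and for Calabi--Yau fourfolds by \cite[Thm.~5.13]{bojko2}. The reduction is carried out using Theorem \ref{thmZEZC} on surfaces (and its K-theoretic analogue for the Verlinde-type series) together with Corollary \ref{thm42r}, which descends from fourfolds to surfaces.

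First I would recognise $S_{E_S,\alpha_S}$ and $V_{E_S,\alpha_S}$ as specialisations of $Z_{E_S}(f,g,\alpha;q)$ (or its K-theoretic counterpart $K_{E_S}$) for appropriate multiplicative genera: Segre corresponds to $f_S(t) = 1/(1+t)$ with trivial $g$, whereas Verlinde corresponds to the determinantal genus $f_V$ combined with the Todd-type contribution of the virtual structure sheaf. Applying Theorem \ref{thmZEZC} on both sides yields explicit correction factors expressing $S_{E_S,\alpha_S}(q)/S_{\mathcal{O}^e_S,\alpha_S}(q)$ and $V_{E_S,\alpha_S}(q)/V_{\mathcal{O}^e_S,\alpha_S}(q)$ as products of values of $f_S$, respectively $f_V$, at the Newton--Puiseux roots of $H^e = q f^a g^e$.

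Second I would verify that the substitution $q \mapsto (-1)^e q$ interchanges the two correction factors. Using the classical identity $s(\alpha) = c(-\alpha)^{-1}$ between Segre and determinantal Chern classes, the two families of Newton--Puiseux roots satisfy $H^S_i((-1)^e q) = H^V_i(q)$ after reordering, so the correction factors match. Combined with the duality in the trivial case, this yields the statement for arbitrary torsion-free $E_S$. The curve case is easier since $Q_C$ is smooth and follows either directly from the formulas of \cite{AJLOP} or by an analogous elementary reduction. For fourfolds I would invoke Corollary \ref{thm42r} to realise $S_{E_X,\alpha_X}$ and $V_{E_X,\alpha_X}$ as $U_e$-images of surface series for a suitable $S$ with $c_1(S)^2 = 0$ and compatible $\alpha_S$; since $U_e$ commutes with $q \mapsto (-1)^e q$ (it only permutes the arguments $(-1)^e e^{2\pi i k/n} q$), the surface duality descends to the fourfold case.

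The main obstacle is the second step: the precise bookkeeping of signs, of the rank $e$, and of the slope factor $\mu_S(E_S)$, needed to show that the two correction factors genuinely coincide under $q \mapsto (-1)^e q$. This amounts to pinning down the correct K-theoretic analogue of Theorem \ref{thmZEZC} for $V_{E_S,\alpha_S}$ and checking that the defining equations for $H^S_i$ and $H^V_i$ are interchanged by the substitution. All remaining steps merely assemble ingredients already present in the paper.
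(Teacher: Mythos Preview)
Your approach for surfaces and fourfolds is essentially the paper's. For $Y=S$ the paper plugs the Segre and Verlinde genera directly into the closed formula \eqref{eqfagT3} and cites \cite[\S5.2,\S5.3]{AJLOP} for the variable changes; your route via Theorem~\ref{thmZEZC} is just a reorganisation of the same content, since Theorem~\ref{thmZEZC} isolates precisely the $a\mu_S(E_S)$-power of $\prod_i f(H_i)/f(0)$ as the ``correction factor'', and the remaining $c_1^2$- and $c_1(\alpha)\cdot c_1$-exponents are exactly the trivial-bundle case handled by \cite{AJLOP}. The ``main obstacle'' you flag (matching the correction factors under $q\mapsto(-1)^eq$) is the same computational core the paper defers to \cite[\S5.2,\S5.3]{AJLOP}. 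For $Y=X$ both you and the paper reduce to surfaces via $U_e$; your observation that $U_e$ intertwines $q\mapsto(-1)^eq$ is correct.

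The genuine gap is the curve case. You write that it is ``easier'' and follows from the formulas of \cite{AJLOP} or an analogous elementary reduction, but neither is available: \cite{AJLOP} only treats $E_C=\mathcal{O}_C^e$, and the paper explicitly remarks that the wall-crossing machinery behind Theorem~\ref{thmZEZC} is not fruitful for curves, so there is no curve analogue of the ratio formula to invoke. The paper instead proves the curve statement by a geometric argument in \S\ref{secTwF}: it uses a universality reduction to $C=\mathbb{P}^1$ with $E_C=\bigoplus_i\mathcal{O}(d_i)$, realises this as the restriction of a split bundle on the blow-up of a K3 along the exceptional curve, and then compares fixed loci under the torus action to identify the twisted Verlinde class $\mathcal{D}_n$ on $Q_C$ with the virtual structure sheaf contribution on $Q_S$. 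This dimensional reduction, not an algebraic correction-factor argument, is what makes the curve case go through for general $E_C$; you would need to supply something equivalent.
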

As already mentioned, the fourfold theory is the holomorphic version of Donaldson's theory which can be also rephrased in terms of virtual fundamental classes when working with complex surfaces (see Morgan \cite{Mo93}). One should therefore expect that previously obtained results for surfaces could be formulated in this case. 
 
	Finally, we obtain the following observation comparing two different Segre/Verlinde series. The authors of \cite{AJLOP} observed a similar symmetry which however required additional restriction on insertions because they focused on studying quotients of trivial vector bundles. We believe this is the correct direction towards answering \cite[Question 16]{AJLOP}, which asks about the geometric interpretation of their version of the symmetry. 
	\begin{theorem}
		\label{thm4FoI}
		Let $E_Y,F_Y\to Y$ be two torsion-free sheaves with ranks $e=\textnormal{rk}(E_Y),f=\textnormal{rk}(F_Y)$, then the following holds:
		\begin{align*}
		S_{E_Y,F_Y}\big((-1)^eq\big) &= S_{F_Y,E_Y}\big((-1)^fq\big)\,.\\
		V_{E_Y,F_Y}(q) &= V_{F_Y,E_Y}(q)\,.
		\end{align*}
		whenever the generating series are defined. 
	\end{theorem}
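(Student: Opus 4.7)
The plan is to reduce the two identities of Theorem \ref{thm4FoI} to a single Segre-type symmetry via the Segre--Verlinde duality of Theorem \ref{thmSVD}, and then to establish that symmetry using the wall-crossing formula of Theorem \ref{thmZEZC} together with the closed expression for $\mathscr{Q}_{E_Y}(q)$ provided by Theorem \ref{thmMain}. Indeed $V_{E_Y,F_Y}(q) = S_{E_Y,F_Y}((-1)^e q)$ and $V_{F_Y,E_Y}(q) = S_{F_Y,E_Y}((-1)^f q)$ by Theorem \ref{thmSVD}, so the Verlinde equation is a formal consequence of the Segre one, and it suffices to prove
$$S_{E_Y,F_Y}\big((-1)^e q\big) \;=\; S_{F_Y,E_Y}\big((-1)^f q\big).$$

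Next, I would apply Theorem \ref{thmZEZC} to each side with the multiplicative genus $s(t) = (1+t)^{-1}$ of Segre classes and $g = 1$, taking $\alpha = F_Y$ on the left (so $a = f = \mathrm{rk}(F_Y)$) and $\alpha = E_Y$ on the right (so $a = e$). After the sign substitutions $q\mapsto (-1)^e q$ and $q\mapsto (-1)^f q$, the identity splits into two tasks: (i) a base equality $S_{\mO_Y^e,F_Y}((-1)^e q) = S_{\mO_Y^f,E_Y}((-1)^f q)$ at the trivial-rank level, and (ii) matching of the two Newton--Puiseux correction factors, one involving the $e$ roots of $H^e = (-1)^e q\, s^f(H)$ raised to the power $f\mu_Y(E_Y)$, the other the $f$ roots of $\tilde H^f = (-1)^f q\, s^e(\tilde H)$ raised to the power $e\mu_Y(F_Y)$.

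Step (i) is the main obstacle: it extends the symmetry observed by Arbesfeld--Johnson--Lim--Oprea--Pandharipande (proved there only for $F_Y = \mO_Y^f$) to arbitrary torsion-free $F_Y$. My approach is to read it directly off the closed formula for $\mathscr{Q}_{\mO_Y^e}(q) \in H_*(\mC_Y)$ furnished by Theorem \ref{thmMain}: the pairing with the tautological class built from $F_Y$ yields an explicit function of $\mathrm{ch}(F_Y)$ and the Chern data of $Y$, and the sign twist $q\mapsto (-1)^{e-f}q$ is precisely what turns the formal exchange $(e,\mathrm{ch}(E_Y))\leftrightarrow(f,\mathrm{ch}(F_Y))$ into a true equality. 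Step (ii) is a purely combinatorial matter: Theorem \ref{thmCtC} expresses both sides as residue formulas, and their invariance under the exchange $e\leftrightarrow f$ follows by an explicit change of variable in the defining equations for $H_i$ and $\tilde H_j$.

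Finally, for $Y = X$ a strict Calabi--Yau fourfold the argument is identical once Theorem \ref{conjecture quot WC} is invoked for the wall-crossing of Borisov--Joyce/Oh--Thomas classes; alternatively, Corollary \ref{thm42r} transfers the surface identity through the universal transformation $U_e$, and the built-in twist by $\mathsf{E}^{1/2}$ in the definition of $\chi^\vir$ ensures that the base-case symmetry of step (i) survives unchanged in the CY4 setting. The curve case is automatic because $Q_C$ is smooth, so the wall-crossing reduces to tracking the insertions on the ordinary fundamental class directly, and the same Newton--Puiseux comparison applies.
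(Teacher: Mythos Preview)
Your reduction of the Verlinde identity to the Segre one via Theorem~\ref{thmSVD} is fine, and the overall strategy of using the explicit wall-crossing formula is the right one. The gap is in your decomposition into steps (i) and (ii): the base equality you claim in (i),
\[
S_{\mO_Y^e,F_Y}\big((-1)^e q\big)\;=\;S_{\mO_Y^f,E_Y}\big((-1)^f q\big),
\]
is false in general. From the closed formula \eqref{eqfagT3} (or equivalently \eqref{eqAJLO}) the left-hand side depends on $F_Y$ through the exponent $c_1(F_Y)\cdot c_1$, while the right-hand side depends on $E_Y$ through $c_1(E_Y)\cdot c_1$; there is no reason for these to agree. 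The same asymmetry afflicts your step (ii): the correction factor from Theorem~\ref{thmZEZC} carries exponent $f\mu_Y(E_Y)$ on one side and $e\mu_Y(F_Y)$ on the other, and these do not match separately either. What actually happens is that the two mismatches cancel against each other: the product of base and correction has the symmetric exponent $\mu_Y(E_Y)+\mu_Y(F_Y)$, and it is only this combination that one can hope to exchange.

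The paper's argument avoids this trap by not separating base and correction at all. One writes out \eqref{eqfagT3} directly for the Segre genus with $\alpha=F_Y$, obtaining
\[
S_{E_Y,F_Y}\big((-1)^e q\big)\;=\;M_{(e,f)}(q)^{\,c_1^2}\,N_{(e,f)}(q)^{\,\mu_Y(E_Y)+\mu_Y(F_Y)},
\]
where $M_{(e,f)}$ and $N_{(e,f)}$ are explicit series in $q$ depending only on the pair of integers $(e,f)$. The exponent is now manifestly symmetric in $E_Y,F_Y$, and the remaining symmetry $M_{(e,f)}=M_{(f,e)}$, $N_{(e,f)}=N_{(f,e)}$ is exactly what was proved in \cite[proof of Thm.~14]{AJLOP}. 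Your Theorem~\ref{thmCtC} / residue comparison is not needed here; the heavy lifting on the $(e,f)\leftrightarrow(f,e)$ symmetry of the universal series was already done by AJLOP and is simply quoted.
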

	\begin{figure}
		\label{fig1}
		\includegraphics[scale=1]{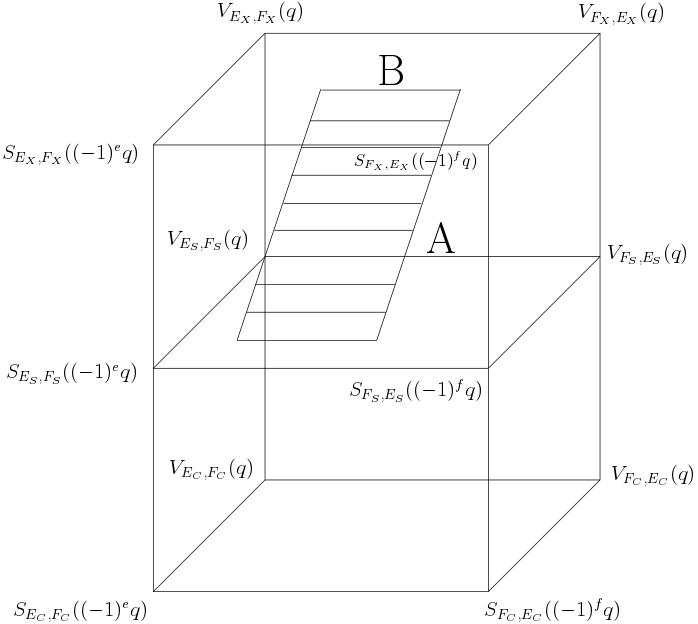}
		\caption{This diagram captures the symmetries of Segre--Verlinde invariants. The letter $B$ is meant to denote that we are specializing to simple rigid locally-free sheaves $E_X,F_X$ for the case $Y=X$. On the other hand, we need to take the ladder A (this is our universal transformation $U_e$ of \eqref{eqUnT} to go from the surfaces to $B$ which puts additional restrictions on geometries of $S$ as described in Theorem \ref{thm42r}.}
		\centering
	\end{figure}
	To conclude, we study an example of the Segre generating series for $Y = X$ and its  K-theoretic refinement studied by the author \cite{bojko2} for the Hilbert schemes of points. These invariants were originally defined by Nekrasov \cite{Nekrasov1} and Nekrasov--Piazzalunga \cite{Nekrasov2}, where they worked equivariantly over $\CC^4$.  
	\begin{example}
		\label{Nekex}
		Let $\alpha_X\in K^0(X)$ be a K-theory class of rank $\textnormal{rk}(\alpha_X) = e$. The invariants 
		$$
		S_{E_X,-\alpha_X}(q) = \sum_{n\geq 0}q^n\int_{[Q_X]^\vir}c_{en}\big(\alpha_X^{[n]}\big)=:I_{E_X,\alpha_X}(q)\,,
		$$
		can be expressed in terms of their K-theoretic refinements. These are defined by
		\begin{equation}
		\label{eqNekI}
		N(\alpha_X,y;q) = \sum_{n\geq 0}q^n\hat{\chi}^{\vir}\Big(\Lambda^{\bullet}_{y^{-1}}(\alpha_X^{[n]})\otimes \textnormal{det}^{-\frac{1}{2}}\big(\alpha_X^{[n]}y^{-1}\big) \Big)\,.
		\end{equation}
		We can recover the former by taking the limit 
		$$
		\textnormal{lim}_{y\to 1^+}(1-y^{-1})^{(a-e)n}[q^n]\Big\{N\big(\alpha_X,y;q\big)\Big\}
		= (-1)^{ne}[q^n]\big\{I_{E_X,\alpha_X}(q)\big\}\
		$$
  as follows from the proof of \cite[Prop. 5.5]{bojko2}.
		Denoting by $M(q) = \prod_{n>0}(1-q^n)^{-n}$ the MacMahon function, we show in Theorem  \ref{thmNek} that
		$$
		N(\alpha_X,y;q)  = M\big(y^{\frac{e}{2}}q\big)M(y^{-\frac{e}{2}}q)^{\frac{1}{2}\big(c_1(\alpha_X)\cdot c_3(X)+\mu_X(E_{X})\big)}\,.
		$$
		By an identical argument as in \cite[(1.10),(1.11)
		]{bojko2} it can be shown to be related to the conjectured formula on $\CC^4$ the proof of which has been announced recently by Kool--Rennemo \cite{KRdraft}.
		Finally, this implies that
		$$
		I_{E_X,\alpha_X}(q) =M\big((-1)^eq \big)^{c_1(\alpha)\cdot c_3(X)+\mu_X(E_X)}\,.
		$$
	\end{example}
	\subsection*{Related works}
	The virtual geometry of $Q_S$ when $E_S = \mO^{e}_S$ is well studied in \cite{OP1, JOP, AJLOP, Lim, Woonamthesis}. However, the extension to a non-trivial vector bundle has only been pursued recently. The work of Stark \cite{stark} also addresses this question. He proves independently  the part of Theorem \ref{thmInd} relating to the virtual Euler characteristic using a universality argument, invariance under the twisting of $E_S$ by a line bundle $L$ and a dimensional reduction to a smooth canonical curve $C$ using cosection localization.

	A shadow of the symmetry described in the first two floors of Figure \ref{fig1} has been observed by Arbesfeld--Johnson--Lim--Oprea--Pandharipande \cite{AJLOP}. However, ours is a genuinely new result, because to recover \cite[Thm. 14]{AJLOP} one would need to divide out by the additional contribution in Theorem \ref{thmZEZC}. As in loc. cit., this leads to forcing additional restrictions which in our case are no longer needed. 
	
	Ricolfi \cite[Conj. 3.5]{ricolfi1} conjectures a formula for the generating series of virtual fundamental classes of some punctual quot-schemes on three-folds.  I was unable to address it  here because there is no virtual fundamental class for 0-dimensional sheaves on a general 3-fold. 
	
	To make our work independent of the previous results in this area found in \cite{AJLOP, JOP, Lim, Woonamthesis} except for the simple example from Oprea--Pandharipande \cite{OP1}, we needed to prove the identity of Theorem \ref{thmCtC} using purely combinatorics. This is accomplished in \cite{bojko4}.
	
	\section*{Acknowledgement}
	I would like to thank Alex Gavrilov and Ira Gessel for their help with the combinatorial identity appearing in this work. Further, I thank Noah Arbesfeld, Dominic Joyce, Martijn Kool, Woonam Lim, Hyeonjun Park, Samuel Stark, Richard Thomas, and Rahul Pandharipande for helpful discussions.

	I was supported by ERC-2017-AdG-786580-MACI. This project has received funding from the European Research Council (ERC) under the European Union Horizon 2020 research and innovation program (grant agreement No 786580). During my stay at the University of Oxford, I was supported by a Clarendon Fund scholarship. 
	
	\section{Minimal geometric input and the construction of $[Q_X]^\vir$}
	We begin by constructing the virtual fundamental classes $[\QuotX]^{\vir}$ for $X$ a strict Calabi--Yau fourfold which we will study only in the case of $\beta=0$ in later sections. Then we explain the minimal external data needed as input into our machinery to obtain all of our results. These are precisely the cap products
	$$
	I_{Y,n}(L)=[\Hilb]^\vir\cap c_n(L^{[n]})
	$$
	for any line bundle $L$, which are known for 
	$$
	\begin{cases}
	Y=S&\textnormal{from Oprea--Pandharipande\cite[(31)]{OP1}}\,,\\
	Y=X& \textnormal{from our previous work \cite[Thm. 3.1]{bojko2} .}
	\end{cases}
	$$
	\subsection{Virtual fundamental classes for Y=X}
	\label{secQCY4c}
	We follow closely the arguments of Ricolfi \cite{ricolfi1} applied to the constructions of Oh--Thomas \cite{OT}, who gave an algebraic construction of virtual fundamental classes for projective moduli schemes of stable sheaves on Calabi--Yau fourfolds. We will continue using the notation $Q_X=\textnormal{Quot}_X(E_X,\beta,n)$. Note that as we increased the dimension of $X$, we allow the support of the quotient $F$ to be in 1 dimension higher than in \cite{ricolfi1}.
	\begin{definition}
		For the rest of the section, we will say that a locally free sheaf $E_X$ is \textit{RS} if it is rigid and simple. Rigid for a perfect complex $E^\bullet$ here means $\Ext^1(E^\bullet,E^\bullet) = 0$ and simple 
  $$
  \Ext^{-i}(E^\bullet, E^\bullet) = 0\,,\quad \text{for}\quad i>0\,,\qquad \text{and}\qquad \Ext^0(E^\bullet,E^\bullet)\cong \CC\,.
  $$
	\end{definition}
	Let 
	$$0\to \mathcal{I}\to \pi_X^*E_X\to \mathcal{F}\to 0$$
	be the universal short exact sequence on $X\times Q_X$, then  Huybrechts--Thomas \cite[Def. 2.6]{HuyTho} define the truncated Atiyah class 
	$$
	\Hom(\mathcal{I},\mathcal{I})[-1]\xrightarrow{\textnormal{At}}\mathbb{L}_{X\times Q_X}\,,
	$$
	which together with $\mathbb{L}_{X\times Q_X}=\pi_X^*\mathbb{L}_X\oplus \pi_{Q_X}^*\mathbb{L}_{Q_X}$ and the natural projection induces 
	$$
	\Hom_{X\times Q_X}(\mathcal{I},\mathcal{I})[-1]\xrightarrow{\textnormal{At}}\pi^*_{Q_X}\mathbb{L}_{Q_X}
	$$
	By Grothendieck--Serre duality, this gives
	$$
\mathbb{E}:=\underline{\textnormal{Hom}}_{Q_X}(\mathcal{I},\mathcal{I})_0[3]\rightarrow \underline{\textnormal{Hom}}_{Q_X}(\mathcal{I},\mathcal{I})[3]\to \mathbb{L}_{Q_X}\,.
	$$
	This is the obstruction theory, which we use to construct the virtual fundamental class using the results below. The majority of the proofs that follow merge the arguments of Ricolfi \cite{ricolfi1} and Oh--Thomas \cite{OT} and the reader is welcome to skip them. Let us also follow the example of Oh--Thomas \cite{OT} and write $a^\bullet$ for a particular representative of $a\in \textnormal{Hom}_{D^b(M)}(\mathcal{E}^\bullet,\mathcal{F}^\bullet)$ which is a true morphism of complexes $\mathcal{E}^\bullet,\mathcal{F}^\bullet\in D^b(M)$. Moreover, we will use $\mathcal{E}_{\bullet}$ to denote the dual of $\mathcal{E}^\bullet$ and similarly for $a_{\bullet}$.
	
	Here we recall that for a fixed class $\alpha\in K^0(X)$ and the moduli space of simple perfect complexes $M_{\alpha}$ (see Inaba \cite[Thm. 0.2]{Inaba} for the proof of its existence), the construction of the virtual fundamental classes always depends on the choice of the orientation on $M_{\alpha}$. This is a $\ZZ_2$ choice for each connected component. Changing the orientations of a connected component changes its virtual fundamental class by a sign. For the reader's benefit, we will avoid going into detail while still recalling some necessary background.
	\begin{definition}
		\label{Def: orientations}
		On the space $\mC_X$ by \cite[(4.13)]{bojko} and  Cao--Gross--Joyce \cite[Def. 3.9]{CGJ}, there is a natural construction of a $\ZZ_2$-bundle denoted by $O^{\omega}$ coming from gauge-theoretic data.  
  Its restrictions to each connected component $\mC_{\alpha}\subset \mC_X$ corresponding to a K-theory class $\alpha\in K^0(X)=\pi_0(\mC_X)$ is denoted by $O^{\omega}_{\alpha}$. In \cite{CGJ} and the auhtor's \cite{bojko}, these bundles are showed to be trivializable 
		thus leading to a $\ZZ_2$-choice of trivializations $$o_\alpha: \ZZ_2\longrightarrow O^\omega_{\alpha}$$
		called \textit{orientations}. These induce orientations on the moduli stack of simple perfect complexes $\mathfrak{M}_{\alpha}=M_{\alpha}\times [*/\GG_m]$ for the algebraic space $M_{\alpha}$.  We pull the orientations back along the natural map 
		$$
		\Gamma_{\alpha}:\mathfrak{M}_{\alpha}\longrightarrow \mC_{\alpha}\,,
		$$
		satisfying $\Gamma^*_{\alpha}(\mathcal{U}_\alpha) = \mathcal{E}_{\alpha}$ for the universal objects $\mathcal{U}_{\alpha}$ on $X\times \mC_{\alpha}$ and $\mathcal{E}_{\alpha}$ on $X\times \mathfrak{M}_{\alpha}$. This way we get orientations on $M_{\alpha}\times *$. In our case, we are working with $\alpha= \llbracket E_X\rrbracket -\llbracket F\rrbracket \,,$ where $[E_X\to F]$ is a point in $Q_X$. 
	\end{definition}
	
	\begin{lemma}
		\label{leRes}
		Let $E_X$ be RS, then the complex $\mathbb{E}$ is perfect of tor-amplitude $[-2,0]$. There is an isomorphism $\theta: \mathbb{E}\xrightarrow{\sim}\mathbb{E}^{\vee}[2]$. In particular, it admits a  three-term locally free resolution
		$$
		\mathcal{E}^\bullet =(\mathcal{T}\to \mathcal{E}\to \mathcal{T}^*)\xrightarrow{a^\bullet} \mathbb{E}
		$$
		which moreover comes with an isomorphism $\delta^\bullet: \mathcal{E}^{\bullet} \to \mathcal{E}_{\bullet}[2]$, such that $\delta^\bullet =  a_\bullet[2]\circ \theta\circ a^\bullet$. After picking an orientation $o_{\alpha}:\mathbb{Z}_2\xrightarrow{\sim}O_{\alpha}$ as in Theorem \cite[Thm. 5.4]{bojko}, we obtain a natural $SO(\mE,\mathbb{C})$ structure on $\mE$. 
	\end{lemma}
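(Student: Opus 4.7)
The plan is to follow the strategy of Oh--Thomas \cite{OT} for stable sheaves, with the intermediate reductions of Ricolfi \cite{ricolfi1}, adapting both to the presence of the external sheaf $E_X$. We proceed in four steps.

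\textbf{Step 1 (tor-amplitude).} To show $\mathbb{E}$ is perfect of tor-amplitude $[-2,0]$, it suffices by cohomology and base change to check, at each closed point $[E_X\to F]\in Q_X$ with kernel $I$, that $\Ext^i(I,I)_0 = 0$ for $i\notin\{1,2,3\}$. The vanishing outside $[0,4]$ is automatic since $X$ is four-dimensional, while the case $i=4$ is Serre-dual on the CY4 $X$ to the case $i=0$. Hence the only real content is $\Ext^0(I,I)_0 = 0$, i.e.\ $I$ is simple. Here I plan to use the short exact sequence $0\to I\to E_X\to F\to 0$ together with the RS hypothesis. Specifically, applying $\Hom(-,E_X)$ and using that $F$ is $1$-dimensional on the $4$-fold $X$ (so that $\Ext^3(E_X,F)=0$ by dimension, hence $\Ext^1(F,E_X)=0$ by Serre duality) together with $\Hom(F,E_X)=0$ gives $\Hom(I,E_X) \cong \Hom(E_X,E_X) \cong \CC$. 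The inclusion $I\hookrightarrow E_X$ then induces an injection $\Hom(I,I)\hookrightarrow \Hom(I,E_X)\cong \CC$, forcing $\Hom(I,I)=\CC$, which is what we need.

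\textbf{Step 2 (self-duality).} With the triviality of $K_X$, Serre duality on the fibres of $\pi_{Q_X}$ gives a natural isomorphism $\underline{\Hom}_{Q_X}(\mathcal{I},\mathcal{I})^\vee \cong \underline{\Hom}_{Q_X}(\mathcal{I},\mathcal{I})[4]$. This splits compatibly with the trace/traceless decomposition and descends to the traceless summand; shifting by $3$ on one side yields the desired isomorphism $\theta: \mathbb{E}\xrightarrow{\sim}\mathbb{E}^\vee[2]$. The symmetry of $\theta$ (rather than merely being an isomorphism) is built into the Serre duality pairing, being induced by the Yoneda pairing composed with the trace.

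\textbf{Step 3 (three-term self-dual resolution).} Given a perfect complex of tor-amplitude $[-2,0]$ equipped with a self-duality $\theta$ as above, the construction of a three-term locally free resolution $\mathcal{E}^\bullet = (\mathcal{T}\to \mathcal{E}\to \mathcal{T}^*)$ compatible with $\theta$ in the sense required (so $\delta^\bullet = a_\bullet[2]\circ\theta\circ a^\bullet$) is exactly the content of the isotropic-subbundle trick in Oh--Thomas \cite[\S4]{OT}; we would invoke it verbatim after verifying that the local conditions (existence of a sufficiently negative twist and local resolutions) hold for $Q_X$, which is projective. I expect no new difficulty here.

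\textbf{Step 4 ($SO$ structure).} The two-out-of-three lemma for orientations, combined with Definition \ref{Def: orientations} and \cite[Thm.\ 5.4]{bojko}, produces a $\ZZ_2$-trivialization of the orientation bundle $O^\omega_\alpha$ restricted to $Q_X$ via the classifying map into $\mC_X$. Pulled back along $a^\bullet$, this trivializes $\det(\mathcal{E})^{1/2}$ compatibly with the quadratic form coming from $\delta^\bullet$, producing an $SO(\mathcal{E},\CC)$-structure on $\mathcal{E}$; this is the analogue of \cite[Prop.\ 4.2]{OT} with the orientation input replaced by the one from \cite{CGJ, bojko}.

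\textbf{Main obstacle.} The only step requiring genuine input beyond the references is Step 1, where the vanishing $\Ext^0(I,I)_0=0$ uses the RS assumption on $E_X$ in a crucial way---the analogous statement fails without simplicity, and rigidity is implicitly needed through the vanishing $\Ext^1(F,E_X)=0$ deduced from Serre duality and the dimension of $\mathrm{supp}(F)$. Once simplicity of $I$ is in hand, the remaining steps are mild adaptations of existing machinery.
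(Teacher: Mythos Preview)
Your proposal is correct and follows essentially the same route as the paper: both establish simplicity of $I$ via the injection $\Hom(I,I)\hookrightarrow\Hom(I,E_X)\cong\Hom(E_X,E_X)\cong\CC$ (using $\Hom(F,E_X)=\Ext^1(F,E_X)=0$ from local freeness of $E_X$ and $\dim\mathrm{supp}(F)\le 1$), deduce $\Ext^i(I,I)_0=0$ for $i=0,4$ by Serre duality, and then invoke Oh--Thomas \cite[Prop.~4.1,~4.2]{OT} verbatim for the self-dual three-term resolution and the $SO(\mE,\CC)$-structure.

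One small inaccuracy in your ``Main obstacle'' commentary: rigidity of $E_X$ is \emph{not} what gives $\Ext^1(F,E_X)=0$; that vanishing comes purely from $E_X$ being locally free and the support of $F$ being at most one-dimensional (so $H^3(E_X^*\otimes F)=0$). Only simplicity of $E_X$ is used in this lemma; rigidity enters through the separate identity $\Ext^1(I,E_X)=0$, which the paper records here but actually uses only in the subsequent proposition (for the open embedding $Q_X\hookrightarrow M_{\llbracket E_X\rrbracket-\llbracket F\rrbracket}$).
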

	\begin{proof}
		For a point $[E_X\to F]\in Q_X$, we will denote the associated short exact sequence by
		\begin{equation}
		\label{eqSes}
		¨0\to I\to E_X\to F\to 0\,.
		\end{equation}
		We first note down two useful identities:
		\begin{equation}
		\textnormal{Ext}^0(I,E_X)\cong \textnormal{Ext}^0(E_X,E_X)\cong \mathbb{C}\,,\qquad \textnormal{Ext}^1(I,E_X) =0\,,
		\end{equation}
		which follow from applying $\textnormal{Ext}^0(-,E_X)$ to \eqref{eqSes} and using that $E_X$ is locally-free. Using $\textnormal{Ext}^0(I,I) =H^0(X,\mathcal{O}_X)\oplus \textnormal{Ext}^0(I,I)_{0}$, the inclusion
		$$
		0\to \textnormal{Ext}^0(I,I)\hookrightarrow \textnormal{Ext}^0(I,E_X)\cong \CC
		$$
		induces 
		\begin{equation}
		\label{eqIsocon}
		\textnormal{Ext}^0(I,I)\cong \textnormal{Ext}^0(I,E_X)\cong \mathbb{C}.
		\end{equation}
		we therefore obtain $\textnormal{Ext}^i(I,I)_0 = 0$ for $i=0,4$ by Serre duality. Applying Proposition Oh--Thomas \cite[Prop. 4.1]{OT} and that $Q_X$ is projective, we obtain the resolution $\mathcal{E}^{\bullet}$ together with $\delta^\bullet$ and $a^\bullet$ satisfying the above conditions. Then applying \cite[Prop. 4.2.]{OT}, we obtain the choice of the $SO(E_X,\mathbb{C})$-structure.
	\end{proof}
	We now construct as in \cite[§4.3]{OT}(see also Behrend--Fantechi \cite[§ 4]{BF}) the cone $\mathcal{C}_{\mathcal{E}^{\bullet}}\subset \mE$ by using $\mE\to \mT^*$ as the two term perfect obstruction theory. 
	\begin{proposition}
		Let $E_X$ be RS then the cone $C_{\mathcal{E}^{\bullet}}\subset \mT$ is isotropic of dimension $\text{rk}(\mT)$. 
	\end{proposition}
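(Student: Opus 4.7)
The plan is to realize $Q_X$ as an open subscheme of the Oh--Thomas moduli space $M_\alpha$ of simple perfect complexes on $X$ of class $\alpha = \llbracket E_X\rrbracket - \llbracket F\rrbracket$, and then import the isotropy statement directly from \cite[Prop. 4.3]{OT}. The classifying map
$$\Phi\colon Q_X \to M_\alpha,\qquad [E_X \twoheadrightarrow F] \mapsto [I]$$
is well defined because $I$ is perfect (as the kernel of a surjection between perfect complexes on $X$) and simple by \eqref{eqIsocon}, so $[I]$ genuinely lies in $M_\alpha$.

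The first substantial step is to show that $\Phi$ is an open immersion. On tangent spaces $d\Phi$ is the natural map $\textnormal{Hom}(I,F) \to \Ext^1(I,I)_0$ coming from the long exact sequence obtained by applying $\textnormal{Hom}(I,-)$ to \eqref{eqSes}. The vanishing $\Ext^1(I,E_X) = 0$ together with $\textnormal{Hom}(I,I) \cong \textnormal{Hom}(I,E_X) \cong \mathbb{C}$ from Lemma \ref{leRes} collapses the relevant four-term piece to an isomorphism $\textnormal{Hom}(I,F)\xrightarrow{\sim}\Ext^1(I,I)_0$. The same $\Ext$-vanishings, applied at each infinitesimal thickening, show that a deformation of $I$ as a perfect complex lifts, uniquely up to scaling, to a deformation of the inclusion $I\hookrightarrow E_X$ (so back to a deformation inside $Q_X$). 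Combined with rigidity of $E_X$, this yields formal \'etaleness of $\Phi$, and hence an open immersion onto its image.

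Once $\Phi$ is an open immersion, the universal complex $\mathcal{I}$ on $X\times Q_X$ pulls back from the universal perfect complex on $X\times M_\alpha$, and with it the truncated Atiyah class, the obstruction theory $\mathbb{E}$, the three-term resolution $\mathcal{E}^\bullet$ of Lemma \ref{leRes}, the isomorphism $\delta^\bullet$, and the $SO(\mathcal{E},\mathbb{C})$-structure induced by the orientation $o_\alpha$ from Definition \ref{Def: orientations}. In particular, the cone $C_{\mathcal{E}^\bullet}\subset \mathcal{E}$ is the restriction along $\Phi$ of the analogous cone on $M_\alpha$. Invoking \cite[Prop. 4.3]{OT}, which proves isotropy and the claimed dimension on $M_\alpha$ (ultimately via the $(-2)$-shifted symplectic structure of Pantev--To\"en--Vaqui\'e--Vezzosi on the derived moduli stack of perfect complexes on a strict Calabi--Yau fourfold), and observing that open immersions preserve both isotropy and fiber dimension over the base, gives the statement.

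The main obstacle is the upgrade from the tangent-level isomorphism to a scheme-theoretic open immersion; this rests squarely on the RS hypothesis on $E_X$ via the $\Ext$-vanishings of Lemma \ref{leRes}, which must be propagated through successive infinitesimal thickenings. A secondary, straightforward verification is that the orientation data on $M_\alpha$ pulls back along $\Phi$ to the orientation used in the construction on $Q_X$, which is immediate from the pullback compatibility of the bundle $O^\omega$ recalled in Definition \ref{Def: orientations}.
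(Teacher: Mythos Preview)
Your proposal is correct and follows essentially the same route as the paper: open-embed $Q_X$ into the moduli space $M_\alpha$ of simple perfect complexes via $[E_X\twoheadrightarrow F]\mapsto [I]$, then import the isotropy of the cone from Oh--Thomas. The paper makes two points more precise than your sketch: the formal \'etaleness of $\Phi$ is obtained by citing Ricolfi \cite[Prop.~2.8]{ricolfi1} together with \cite[Rem.~2.3.8]{Ser} (your ``propagated through successive infinitesimal thickenings'' is exactly this, using the tangent isomorphism $\Ext^0(I,F)\cong\Ext^1(I,I)$ and the obstruction injection $\Ext^1(I,F)\hookrightarrow\Ext^2(I,I)$), and the compatibility of the Atiyah-class obstruction theory on $Q_X$ with the one pulled back from $M_\alpha$ is checked by passing through the derived mapping stack $\boldsymbol{\mathcal{M}}_X$ and an explicit commutative diagram, rather than asserted directly.
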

	\begin{proof}
		The idea is to use the construction for coherent sheaves of Oh--Thomas \cite{OT}. This can be done, because of the following:
		We claim that there is a map $\phi: Q_X\to M_{\llbracket E_X\rrbracket-\llbracket F\rrbracket}$ and prove that $\phi$ is an open embedding by checking that the argument of Ricolfi \cite[§2.1 -§2.3]{ricolfi1} still apply.
		
		Note that it is injective by the definition of $Q_X$. Moreover, by applying $\textnormal{Ext}^0(I,-)$ to \eqref{eqSes}, we obtain 
		\begin{equation*}
		\begin{tikzcd}
		0\arrow[r]&\textnormal{Ext}^0(I,I) \arrow[r]&\textnormal{Ext}^0(I,E_X)\arrow[r,"u"]&\textnormal{Ext}^0(I,F)\\
		\arrow[r,"\partial"]& \textnormal{Ext}^1(I,I)\arrow[r]&0\arrow[r]&\textnormal{Ext}^1(I,F)\\
		\arrow[r,"\partial"]&\textnormal{Ext}^2(I,I)\,,
		\end{tikzcd}
		\end{equation*}
		where $u$ must be zero by \eqref{eqIsocon}. Therefore, we have 
		\begin{equation}
		\label{eqObs}
		\textnormal{Ext}^0(I,F)\xrightarrow{\sim} \textnormal{Ext}^1(I,I)\,,\qquad \textnormal{Ext}^1(I,F)\hookrightarrow \textnormal{Ext}^2(I,I)\,.
		\end{equation}
		By the proof of Ricolfi's \cite[Prop. 2.8]{ricolfi1} together with Serre's \cite[Rem. 2.3.8]{Ser} it follows from \eqref{eqObs} that $\phi$ is locally étale thus an open immersion. \\
		
		For the rest of the proof, we follow the arguments of Oh--Thomas \cite[p. 22, p. 32]{OT}. Let $\boldsymbol{\mathcal{M}}_X=\textbf{Map}(X,\textbf{\textnormal{Perf}})$ be the derived mapping stack for $\textbf{Perf}_{\mathbb{C}}$ the derived stack of perfect complexes. Let, $\mathcal{M}_{\llbracket E_X\rrbracket-\llbracket F\rrbracket} = [*/\mathbb{G}_m]\times M_{\llbracket E_X\rrbracket-\llbracket F\rrbracket}$. We have the natural $\tilde{m}: \mathcal{M}_{\llbracket E_X\rrbracket -\llbracket F\rrbracket}\rightarrow\boldsymbol{\mathcal{M}}$ together with $q:Q_X\to \boldsymbol{\mathcal{M}}$ induced by the respective universal complexes. Using the section $s: M_{\llbracket E_X\rrbracket - \llbracket F\rrbracket }\to  M_{\llbracket E_X\rrbracket - \llbracket F\rrbracket }\times [*/\mathbb{G}_m]$ and its composition $m =\tilde{m}\circ s$ gives the commutative diagram:
		$$
		\begin{tikzcd}
		Q_X\arrow[d,"\phi"]\arrow[dr,"q"]&\\
		M_{\llbracket E_X\rrbracket-\llbracket F\rrbracket}\arrow[r,"m"]&\boldsymbol{\mathcal{M}_X}
		\end{tikzcd}
		$$
		In particular, because $\phi$ is an open immersion we can reduce the constructions in loc. cit. to  $Q_X$. We give a short summary:
		
		Let $\mathfrak{q}: \uHom_{Q_X}(\mathcal{I},\mathcal{I})[3]= q^*\mathbb{L}_{\boldsymbol{\mathcal{M}}_X} \to \mathbb{L}_{Q_X}$ be the map induced by $q$, then we have the following commutative diagram:
		\begin{equation*}
		\begin{tikzcd}
		\arrow[d,equal]\underline{\textnormal{Hom}}_{Q_X}(\mathcal{I},\mathcal{I})_0[3]\arrow[r,"\textnormal{i}_1"] &\underline{\textnormal{Hom}}_{Q_X}(\mathcal{I},\mathcal{I})[3]\arrow[d,equal]\arrow[r,"\textnormal{At}"]&\mathbb{L}_{Q_X}\arrow[d,equal]\\
		\underline{\textnormal{Hom}}_{Q_X}(\mathcal{I},\mathcal{I})_0[3]\arrow[r,"\textnormal{i}_2"] &\underline{\textnormal{Hom}}_{Q_X}(\mathcal{I},\mathcal{I})[3]\arrow[r,"\mathfrak{q}"]&\mathbb{L}_{Q_X}
		\end{tikzcd}\,.
		\end{equation*}
		The map $\textnormal{i}_2$ is induced by the splitting $\boldsymbol{\mathcal{M}}_{\textnormal{det}(E_X)}\times \textbf{Pic}(X) =\boldsymbol{\mathcal{M}}_X $ étale locally around $[S]$. The square diagram on the right commutes by Schürg--Toën--Vezzosi \cite[Rem. A.1]{STV} and the one on the left does also because in both cases the map is obtained as the kernel of the trace map. The proof then follows by identical arguments as in Oh--Thomas \cite[Prop. 4.1, §4.4]{OT}. The main point is that we rely on a neighbourhood $\boldsymbol{U}\to \boldsymbol{\mathcal{M}}_X$ of $[I]$ from \eqref{eqSes}, where the map is of relative dimension 1 and induces an étale neighbourhood $t_0(\boldsymbol{U})\to Q_X$. We used $t_0(-)$ to denote the classical truncation of the derived affine scheme $\boldsymbol{U}$. Proving isotropy then relies on comparing a choice of a resolution on $Q_X$ and on $t_0(\boldsymbol{U})$, where the former has a cone which is canonically isotropic by the work of Bassat--Brav--Bussi--Joyce \cite[Thm. 2.10]{BBBJ} and Brav--Bussi--Joyce \cite[Ex. 5.16]{BBJ}.
	\end{proof}
	
	We are now able to construct the virtual fundamental classes.
	\begin{definition}
		Let $E_X$ be RS and let $\sqrt{0^!_{\mathcal{E}^\bullet}}:A_*(C_{\mE^{\bullet}})\to A_{*-\frac{\text{rk}(\mE)}{2}}(Q_X)$ be the square-root Gysin map of Oh--Thomas \cite[Def. 3.3]{OT} for a fixed $SO(\mathcal{E},\mathbb{C})$ structure as in Lemma \ref{leRes}, then we define
		$$
		[Q_X]^{\vir} = \sqrt{0^!_{\mathcal{E}^\bullet}}[C_{\mathcal{E}^\bullet}]\in A_{e\cdot n}(Q_X)\,.
		$$
		It is independent of the choice of $\mathcal{E}^{\bullet}$ by \cite[p. 28-29]{OT}. 
	\end{definition}

	\subsection{Minimal input data}
	As we already explained in § \ref{Sec: our methods}, by using the ideas in \cite{bojko2}, we only need a minuscule amount of knowledge about the virtual fundamental class of $\text{Quot}_S(\mO_S,n) =\text{Hilb}^n(Y)$ to recover the information about all descendants of the class $[\QuotY]^\vir$. We use this section to note down the necessary invariants for later use. 
	
	Let $Y=C,S, X$.  We only need one ingredient for all the results discussed above, . Let us denote
	$$
	I_{Y,L}(q) = 1+\sum_{n>0}q^n I_{Y,n}(L)\,, 
	$$
	for the invariants 
	$$
	I_{Y, n}(L) = \int_{[Q_Y]^\vir}s\big((-L)^{[n]}\big)\,.
	$$
	When $E_Y = \mO_Y$, $\beta=0$ we have  $\textnormal{Quot}_Y(\mathbb{C}^1,n)= Y^{[n]}$ and the virtual fundamental classes get identified with
	\begin{align*}
	\big[\textnormal{Quot}_S(\mathbb{C},n)\big]^{\textnormal{vir}} &= \big[S^{[n]}\big]\cap c_n(K_S^{[n]}\,^\vee)\,,\\
	\big[\textnormal{Quot}_C(\mathbb{C},n)\big]^\vir &= \big[C^{[n]}\big]\,,\\
	\big[\textnormal{Quot}_X(\mathbb{C},n)\big]^{\textnormal{vir}} &= \big[X^{[n]}\big]^{\vir}
	\end{align*}
	using that $S^{[n]}, C^{[n]}$ are smooth and taking the first equality from Oprea--Pandharipande \cite[(31)]{OP1}. 
	
	From now on we will denote $c_i(Y)$ by $c_i$ for Chern-classes when $Y$ is understood. From \cite[Cor. 15]{OP1}, we obtain 
	\begin{equation}
	\label{eqrk1}
	I_{S,L}(q)= A(q)^{c_1(L)\cdot c_1}B^{c_1^2}\,.
	\end{equation}
	where 
	$$A(q)=1-t\qquad B=-\Big(\frac{q}{t}\Big)^2\Big(\frac{dq}{dt}\Big)^{-1}\qquad q=-t(1-t)^{-1}\,.$$
	A simple computations shows that 
	\begin{equation}
	\label{eqABs}
	A(q)=\frac{1}{1-q}\,,\qquad B=1 \,.
	\end{equation}
	
	We will avoid addressing the curve case using wall-crossing, because it is not a fruitful approach while for $Y=X$ the result is parallel but depends on the additional choices of orientations as in \cite[§3.1]{bojko2}. As we already did compute the virtual fundamental classes of $0$-dimensional sheaves in \cite[Thm. 3.10]{bojko2}, we only recall their formula in Lemma \ref{lemNnp} below.
	
	\section{Vertex algebras and wall-crossing for quot-schemes}
	\label{SecVACQ}
	The main machinery used for computing interesting invariants are the vertex algebras of Joyce \cite{Joycehall} coming from a geometric construction. We explain the necessary topological data needed to construct them and how they lead to Lie algebras, where the wall-crossing takes place. We finish by discussing the resulting wall-crossing formulae in the algebro-geometric setting and the induced formulae in the topological setting. 
	
	\subsection{Vertex algebras}
	\label{VAinAG}
	Let us recall first the definition of vertex algebras focusing on \textit{graded lattice vertex algebras}. For background literature, we recommend  \cite{Borcherds, KacVA, BZVA, FLM, LLVA, gebert}, with Borcherds \cite{Borcherds} being most concise.
	
	\begin{definition}
		A \textit{$\mathbb{Z}$-graded vertex algebra} over the field $\mathbb{Q}$ is a collection of data $(V_*,T,\ket{0}, \mathsf{Y})$, where $V_*$ is a $\ZZ$-graded vector space, $T: V_*\to V_{*+2}$ is graded linear, $\ket{0}\in V_0$, $\mathsf{Y}: V_*\to \textnormal{End}(V_*)\llbracket z,z^{-1}\rrbracket$ is graded linear after setting $\textnormal{deg}(z)=-2$,  satisfying the following set of assumptions.
		Let $u,v,w\in V_*$, then 
		\renewcommand{\theenumi}{\roman{enumi}}
		\begin{enumerate}
			\item We always have $\mathsf{Y}(u,z)v\in V_*(\!(z)\!)$,
			\item $\mathsf{Y}(\ket{0},z)v = v$, 
			\item $\mathsf{Y}(v,z)\ket{0}=e^{zT}v$, 
			\item Let $\delta(z)=\sum_{n\in \ZZ}z^n\in\mathbb{Q}\llbracket z,z^{-1}\rrbracket$ \begin{align*}
			z_2^{-1}\delta\Big(\frac{z_1-z_0}{z_2}\Big)\mathsf{Y}&\big(\mathsf{Y}(u,z_0)v,z_2)w = z_0^{-1}\delta\Big(\frac{z_1-z_2}{z_0}\Big)\mathsf{Y}(u,z_1)\mathsf{Y}(v,z_2)w \\&- (-1)^{\textnormal{deg}(u)\textnormal{deg}(v)}z_0^{-1}\delta\Big(\frac{z_2-z_1}{-z_0}\Big)\mathsf{Y}(v,z_2)\mathsf{Y}(u,z_1)w\,,
			\end{align*}
			where $z_i$ are different variables for $i=0,1,2$.
		\end{enumerate}
	\end{definition}
	By Borcherds \cite{Borcherds}, the graded vector-space $V_{*+2}/T(V_*)$ carries a graded Lie algebra structure determined by
	\begin{equation}
	\label{eqLia}
	[\bar{u},\bar{v}]=[z^{-1}]\mathsf{Y}(u,z)v\,,  
	\end{equation}
	where $\overline{(-)}$ denotes the equivalence class in the quotient.
	Let $A^{\pm}$ be abelian groups and $\chi^{\pm}: A^{\pm}\times A^{\pm}\to \ZZ$ be symmetric bi-additive maps. Let us denote $\mathfrak{h}^{\pm} = A^{\pm}\otimes \mathbb{Q}$ and fix a basis $B^{\pm}$ of $\mathfrak{h}^{\pm}$ with the total basis being 
	$$
	B=B^+\sqcup B^-\,.
	$$
	For $(A^\bullet,\chi^\bullet)$, where $A^\bullet =A^+\oplus A^-$ and $\chi^{\bullet} = \chi^+\oplus \chi^-$ and a choice of a group 2-cocycle $\epsilon: A^+\times A^+\to \ZZ_2$ satisfying
	\begin{equation}
	\label{eqSig}
	\epsilon_{\alpha,\beta} = (-1)^{\chi^+(\alpha,\beta)+\chi^+(\alpha,\alpha)\chi^+(\beta,\beta)}\epsilon_{\beta,\alpha}   \,,\quad  \forall \,\,\alpha,\beta\in A^+
	\end{equation}
	there is a natural graded vertex algebra on
	\begin{align*}
	V_*&=\QQ[A^+]\otimes_{\QQ}\textnormal{SSym}_{\QQ}\llbracket u_{v,i}, v\in B,i>0 \rrbracket\\
	&\cong\mathbb{Q}[A^+]\otimes _{\mathbb{Q}}\textnormal{Sym}_{\QQ}\big(A^+\otimes t^2\QQ[t^2]\big)\otimes_{\QQ}\Lambda_{\QQ}\big(A^-\otimes t\QQ[t^2]\big)\,,    
	\end{align*}
	where $u_{v,i}$ are formal variables of degree 
	$$
	\begin{cases}
	2i&\text{if}\quad v\in B^+\\
	2i-1&\text{if}\quad v\in B^-
	\end{cases}\,.
	$$
	The vertex algebra structure is called the \textit{lattice vertex algebra} $(V_*,T,\ket{0},\mathsf{Y})$ associated to $(A^\bullet,\chi^\bullet)$.  Its fields can be described explicitly as follows. Let $\alpha\in A^+$, such that $\alpha=\sum_{v\in B^+}\alpha_v v$. We use $e^{\alpha}$ to denote the corresponding element in $\QQ[A^+]$. For $K\in \textnormal{SSym}_{\QQ}\llbracket u_{v,i}, v\in B,i>0 \rrbracket$ any $\alpha,\beta\in A^+$ and $v\in \mathfrak{h}^{\pm}$ we have
	\begin{align*}
	\label{fields}
	\mathsf{Y}(e^{0}\otimes u_{v,1},z)e^{\beta}\otimes K &= e^\beta\otimes \Big\{\sum_{k>0}u_{v,k}\cdot Kz^{k-1} \\
	+ &\sum_{k>0}\sum_{w\in B}k^{(1-\delta_v)}\chi^{\bullet}(v,w)\frac{dK}{d u_{w,k}}z^{-k-1+\delta_v} + \chi^\bullet(v,\beta)z^{-1}\Big\}\,,\\
	\mathsf{Y}(e^{\alpha}\otimes 1,z)e^\beta\otimes K &= \epsilon_{\alpha,\beta}z^{\chi^+(\alpha,\beta)}e^{\alpha+\beta}\otimes \textnormal{exp}\Big[\sum_{k>0}\sum_{v\in B^+}\frac{\alpha_v}{k}u_{v,k}z^k\Big]\\
	&\textnormal{exp}\Big[-\sum_{k>0}\sum_{v\in B^+}\chi^+(\alpha,v)\frac{d}{du_{v,k}}z^{-k}\Big]K\,,
	\numberthis
	\end{align*}
 where 
 $$
 \delta_v =\begin{cases}
     0&\text{if}\quad v\in \mathfrak{h}^+\\
    1&\text{if}\quad v\in \mathfrak{h}^-
 \end{cases}\,.
 $$
	Note that by the reconstruction lemma in Lepowsky--Li \cite[Thm. 5.7.1]{LLVA}, Ben--Zvi \cite[Thm. 4.4.1]{BZVA} and Kac \cite[Thm. 4.5]{KacVA}, these formulae are sufficient for determining all fields. These structures were observed to appear in algebraic geometry and topology by Joyce \cite{Joycehall}. We now give a simplified description. 
 
 When $\pi_{I}:\prod_{k\in K}Z_k\to \prod_{i\in I} Z_i$ is the projection to factors labeled by $i\in I\subset K$, we will use the notation 
			$$
			(\Theta)_{I}= \pi_{I}^*(\Theta)\,,
			$$
			when $\Theta$ is a perfect complex, K-theory class, or cohomology class on $\prod_{i\in I} Z_i$.
	\begin{definition}
		\label{DeftopVA}
		Let $(\mathcal{C},\mu, 0)$ be an H-space (see Hatcher \cite{Hatcher})with a $\mathbb{C}\mathbb{P}^{\infty}$ action $\Phi:\mathbb{C}\mathbb{P}^{\infty}\times \mathcal{C}\to \mathcal{C}$ which is an $H$-map with respect to the multiplication $\mu$ an identity $0$. Let $\theta\in K^0(\mathcal{C}\times \mC)  =[\mathcal{C}\times \mC,BU\times\ZZ]$ be a K-theory class satisfying $\sigma^*(\theta) = \theta^\vee$, where $\sigma:\mC\times\mC\to \mC\times \mC$ permutes the two factors, together with
		\begin{align*}
		(\mu\times \textnormal{id}_{\mathcal{C}})^*(\theta)= (\theta)_{1,3}+ (\theta)_{2,3}\,,&\qquad 
		(\textnormal{id}_{\mathcal{C}}\times \mu)^*(\theta)=(\theta)_{1,2}+ (\theta)_{1,3}\,,\\
		(\Phi\times \textnormal{id}_{\mathcal{C}})^*(\theta) = \mV_1\boxtimes \theta\,,&\qquad
		(\textnormal{id}_{\mathcal{C}}\times \Phi)^*(\theta) = \mV_1 ^*\boxtimes \theta\,,
		\end{align*}
		where $\mV_1\to \mathbb{C}\mathbb{P}^{\infty}$ is the universal line bundle.
		
		Let $\pi_0(\mathcal{C})\to K$ be a morphism of commutative monoids. Denote $\mathcal{C}_\alpha$ to be the open and closed subset of $\mathcal{C}$ which is the union of connected components of $\mathcal{C}$ mapped to $\alpha\in K$. We write  $\theta_{\alpha,\beta} = \theta|_{\mathcal{C}_\alpha\times \mathcal{C}_\beta}$, and $\chi^+(\alpha,\beta) = \textnormal{rk}(\theta_{\alpha,\beta})$ must be a symmetric bi-additive form on $K$. Let $\epsilon: K\times K\to  \{-1,1\}$ satisfying
		\begin{align*}
		\label{signs}
		\epsilon_{\alpha,\beta} &= (-1)^{\chi^+(\alpha,\beta)+\chi^+(\alpha,\alpha)\chi^+(\beta,\beta)}\epsilon_{\beta,\alpha}   \,,\quad  \forall \,\,\alpha,\beta\in A^+\\
		\epsilon_{\alpha,0} &= \epsilon_{0,\alpha}=1\,,\qquad
\epsilon_{\alpha,\beta}\epsilon_{\alpha+\beta,\gamma}=\epsilon_{\beta,\gamma}\epsilon_{\alpha,\beta+\gamma}\,,
		\numberthis
		\end{align*}
		be a group 2-cocycle and $\hat{H}_a(\mathcal{C}_\alpha) = H_{a-\chi(\alpha,\alpha)}(\mathcal{C}_\alpha)$. Then we denote by $$(\hat{H}_*(\mathcal{C}), \ket{0},T, \mathsf{Y})$$ the vertex algebra on the graded $\QQ$-vector space $\hat H_*(\mathcal{C}) =\bigoplus_{\alpha\in K}\hat H_*(\mathcal{C}_{\alpha})$ defined from the data $$(\mathcal{C}, K(\mathcal{C}),\Phi,\mu,0,\theta, \epsilon)$$ by 
		\begin{itemize}
			\item $\ket{0} =0_*(*)$, where $*$ denotes the generator of $H_0(\text{pt})$ and $T(u) = \Phi_*(t\boxtimes u)$, where $t$ is the dual of $c_1(\mV_1)$,
			\item the state to field correspondence $Y$ is given by
			\begin{align*}
			\mathsf{Y}(u,z)v = \epsilon_{\alpha,\beta} (-1)^{a\chi^+(\beta,\beta)}z^{\chi^+(\alpha,\beta)}\mu_*(e^{zT}\otimes \textnormal{id})\big((u\boxtimes v)\cap c_{z^{-1}}(\theta_{\alpha,\beta})\big)\,,
			\end{align*}
			for all $u\in \hat{H}_a(\mathcal{C}_\alpha)$, $v\in \hat{H}_b(\mathcal{C}_\beta)$. 
		\end{itemize}
		\label{definition haltop}
	\end{definition}
	\begin{remark}
 The complex $\theta$ has a geometric interpretation which is useful to keep in mind when constructing these vertex algebras in relation to wall-crossing. For this, we fix $\mC$ to be a scheme of perfect complexes on $Y$ which admits an obstruction theory 
		$$
		\EE = \uHom_{\mC}(\mE,\mE)^\vee[-1] \longrightarrow \LL_{\mC}\,,
		$$
		where $\mE$ is the universal perfect complex on $Y\times \mC$. If
		$$
		\mu: \mC\times \mC\longrightarrow \mC
		$$
		were an embedding induced by taking direct sums of perfect complexes, then the virtual co-normal bundle would be given by
  \footnote{We work in the next equation with the factors $Y\times \mC\times \mC$ in this specified order.}
		$$
		\theta[1]= \uHom_{\mC\times \mC}\big(\mE_{1,2},\mE_{1,3}\big)^\vee\oplus \uHom_{\mC\times \mC}\big(\mE_{1,3},\mE_{1,2}\big)^\vee\,.
		$$
		In particular, if we pull back the first summand along the diagonal $\Delta: \mC\to \mC\times \mC$ then we recover $\EE[1]$. The general set up for quot-schemes is similar, but we need to adapt the class $\theta$ to the pair obstruction theory when $Y=S,C$.
	\end{remark}
	
	The wall-crossing formulae in Joyce \cite{JoyceWC}, Gross--Joyce--Tanaka \cite{GJT} are expressed in terms of a Lie algebra defined by Borcherds \cite{Borcherds}. Let $(\hat{H}_*(\mathcal{C}), \ket{0},T, \mathsf{Y})$ be the vertex algebra from Definition \ref{definition haltop} and define
	$$\Pi_{*+2}:\hat{H}_{*+2}(\mathcal{C})\longrightarrow \check{H}_*(\mathcal{C}) = \hat{H}_{*+2}(\mathcal{C})/T\big(\hat{H}_{*}(\mathcal{C})\big)\,,$$
	then this homology has a natural Lie algebra structure given by
	\begin{equation}
	\label{lie algebra}
	[\bar{u},\bar{v}]=[z^{-1}]\{\mathsf{Y}(u,z)v\}\,, 
	\end{equation}
	where $\bar{U}$ denotes the equivalence class in $\hat{H}_{*+2}(\mathcal{C})$.
	
	The following remark discusses briefly, how to reduce working with (higher) stacks and perfect complexes to topological spaces and K-theory classes, but a reader not familiar with this machinery can pretend  that any stack with a perfect complex is a topological space with a K-theory class. 
	
	\begin{remark}
 \label{rem:toprel}
		For a higher stack $\mathcal{S}$, we denote by $H_*(\mathcal{S}) = H_*(\mathcal{S}^{\textnormal{top}})$, $H^*(\mathcal{S}) = H^*(\mathcal{S}^{\textnormal{top}})$ its Betti (co)homology as in Joyce \cite{Joycehall}, Gross \cite{gross} using topological realization functor of Blanc \cite{Blanc}. Note that we will always treat $H_*(T)$ as a direct sum and $H^*(T)$ as a product over all degrees. Following May--Ponto \cite[§24.1]{MayPonto} define the topological K-theory of $\mathcal{S}$ to be
		$$
		K^0(\mathcal{S}) =[\mathcal{S}^{\textnormal{top}}, BU\times\ZZ]\,,
		$$
		where $[X,Y] =\pi_{0}\big(\textnormal{Map}_{C^0}(X,Y) \big)$. For any perfect complex $\mathcal{E}$ on $\mathcal{S}$, there is a unique map $\phi_{\mathcal{E}}: \mathcal{S}\to \textnormal{Perf}_{\CC}$ in $\textbf{Ho}(\textbf{HSt})$ satisfying $\phi_{\mathcal{E}}^*(\mU) = \mE$ for the universal complex $\mU$ on $\textnormal{Perf}_{\CC}$. Using Blanc \cite[§4.1]{Blanc}, this gives
		$$
		\llbracket \mathcal{E}\rrbracket: \mathcal{S}^{\textnormal{top}}\longrightarrow BU\times \ZZ\,.
		$$
		in $\textbf{Ho}(\textbf{Top})$. We then have a well defined map assigning to each perfect complex $\mathcal{E}$ its class $\llbracket \mathcal{E}\rrbracket\in K^0(\mathcal{S})$.  
		Using these constructions we will construct the data in Definition \ref{DeftopVA} out of the corresponding lifts to algebraic geometry.
	\end{remark}
	\subsection{Vertex algebra construction}
	This section builds examples of the construction in Definition \ref{DeftopVA} by describing the precise topological data needed to work with quot-schemes parametrizing the morphisms
	\begin{equation}
	\label{Eq: EYtoF}
	W\otimes E_Y\longrightarrow F\,.
	\end{equation}
	Here, $E_Y$ is fixed, $W$ is a vector space of a fixed dimension giving a point in $BU$ and $F$ a zero-dimensional sheaf. After doing so for the stack of morphisms of the form \eqref{Eq: EYtoF} the construction of which was given by Joyce  \cite[Def. 5.5]{JoyceWC} when $E_Y = \mO_Y$, we address its purely topological analog on $\mP_Y = (\BU\times \ZZ)\times \mC_Y$. To compare the two, one should view $BU\times \ZZ$ as keeping track of the vector space $U$ and $\mC_Y$ seeing the K-theory class of $F$. These explicit descriptions are useful for describing the appropriate data of the vertex algebra, especially the complexes governing the obstruction theory.

	\begin{definition}
		\label{defAMTH}
		\begin{enumerate}[(i)]
			\item For $Y=X, S, C$ we consider the abelian category $\mathcal{A}_{E_Y}$ of  triples $(F,U,\phi)$, where $U$ is a vector space $\phi: W\otimes E_Y\rightarrow F$ and $F$ is a zero-dimensional sheaf. The morphisms are the pairs $(v,f):(F_1,U_1,\phi_1)\to (F_2,U_2,\phi_2)$, where $v:U_1\to U_2$ is a linear map and $f:F_1\to F_2$ a morphism of $\mO_X$-modules satisfying $\phi_2\circ v = f\circ \phi_1$.
			\item We define $K(\mathcal{A}_{E_Y}) = \ZZ\times \ZZ$ and the cone $C(\mathcal{A}_{E_Y}) = \{(n,d)\in\ZZ\times \ZZ|\,n,d\geq 0 \quad\textnormal{and}\quad (d,n)\neq 0  \}$. The integer $d$ keeps track of the rank of $W$ while $n$ keeps track of the number of points in the quotient. 
			\item Let $\mathcal{E}_{np}$ be the universal sheaf on $Y\times \mathcal{M}_{np}$, where $\mathcal{M}_{np}$ is the moduli stack of 0-dimensional sheaves in class $np$ and $p=\llbracket \mathcal{O}_y\rrbracket$ for some $y\in Y$. From now on, we leave out $p$ from the subscript in the notation where possible.
   
   The moduli stack $\mathcal{N}^{E_Y}_0$ is constructed as in Definition \cite[Def 2.12]{bojko2} (see also \cite[Def. 4.5.17]{PhD}), except in the first bullet point of  \cite[Def 2.12]{bojko2}, we take the total space of the vector bundle 
			$$\pi_{n,d}: \pi_{2\,*}\big(\pi_1^*(E^*_Y)^{}\otimes\mathcal{E}_{n}\big)\boxtimes \mathcal{V}^*_d\to \mathcal{M}_{n}\times [*/\textnormal{GL}(d,\CC)]\,,$$
   where $\mV_d$ is the universal vector bundle on $[*/\GL(d,\CC)]$. We then set
			$$
			\mathcal{N}^{E_Y}_0 = \bigsqcup_{\begin{subarray}a (n,d)\\
   n,d\geq 0\end{subarray}}\mathcal{N}^{E_Y}_{(n,d)} \,,
			$$
			where 
			$$
			\mathcal{N}^{E_Y}_{(n,d)} = \begin{cases}\textnormal{Tot}\big(\pi_{n,d}\big)&\textnormal{if}\quad n,d\neq 0\,,\\
			\mM_{n}&\textnormal{if}\quad  d=0,n\neq 0\,,\\
			[*/\textnormal{GL}(d,\CC)]&\textnormal{if}\quad d\neq 0,n=0\,.
			\end{cases}
			$$
			and empty otherwise. 
			\item For the stack $\mN^{E_Y}_0$, we will construct the perfect complex $\Theta^{E_Y,\textnormal{pa}}$ on $\mN^{E_Y}_0\times \mN^{E_Y}_0$ such that 
			$$
			\Delta^*(\Theta^{E_Y,\textnormal{pa}})[-1] 
			$$
			is the obstruction theory of the stack which at each point $[W\otimes E_Y\to F]$ is given by
			$$
			\begin{cases} \text{Ext}^\bullet(W\otimes E_Y\to F,F)&\text{if}\quad Y=C,S\\
			\text{Ext}^\bullet(W\otimes E_Y\to F, W\otimes E_Y\to F) &\text{if}\quad Y=X\,.
			\end{cases}
			$$
			Fixing the order $Y\times \mM_{n_1}\times \mM_{n_2}$ of products, let $$\Theta_{n_1,n_2}=\underline{\textnormal{Hom}}_{\mathcal{M}_{n_1}\times \mathcal{M}_{n_2}}\Big((\mathcal{E}_{n_1})_{1,2},(\mathcal{E}_{n_2})_{1,3}\Big)^\vee\,.$$
			We define $\Theta^{E_Y,\textnormal{ob}}$, which captures the obstruction theory of pairs \eqref{eqClF} and \eqref{eq:CYob}, by
			\begin{equation*} 
			\Theta^{E_Y,\textnormal{ob}}_{(n_1,d_1),(n_2,d_2)} =\begin{cases} (\pi_{n_1,d_1}\times \pi_{n_2,d_2})^*\\
			\Big\{(\Theta_{n_1,n_2})_{1,3}\oplus \Big( \mathcal{V}_{d_1} \boxtimes\pi_{2\,*}\big(\pi^*_1(E^*_Y)\otimes  \mathcal{E}_{n_2}\big)^\vee\Big)_{2,3}[1]\Big\}&\textnormal{if} \quad Y=S,C\\
			&
			\\
			(\pi_{n_1,d_1}\times \pi_{n_2,d_2})^*&\\
			\Big\{(\Theta_{n_1 ,n_2 })_{1,3} \oplus \Big((V_{d_1}\boxtimes V^*_{d_2})^{\oplus \chi(E^*_Y\otimes E_Y)}\Big)_{2,4}&\\
			\oplus  \Big(\mathcal{V}_{d_1}\boxtimes \pi_{2\,*}\big(\pi^*_1(E^*_Y)\otimes\mathcal{E}_{n_2 }\big)^\vee\Big)_{2,3}[1]
			&\\\oplus \Big( \pi_{2\, *}\big( \pi^*_1(E^*_Y)\otimes \mathcal{E}_{n_1 }\big)^\vee\boxtimes\mathcal{V}_{d_2}\Big)_{1,4} [1]\Big\}\,.&\textnormal{if}\quad Y=X
			\end{cases}
			\numberthis
			\end{equation*}
   	Working with $Y=C,S$, the correct pair vertex algebra structure requires the symmetrization of $\Theta^{E_Y,\textnormal{ob}}$: 
 $$\Theta^{E_Y,\textnormal{pa}} = \Theta^{E_Y,\textnormal{ob}}\oplus\sigma^*\big(\Theta^{E_Y,\textnormal{ob}}\big)^\vee\,.$$
 	In the case of $Y=X$, the K-theory class is already self-dual, so we follow \cite{bojko2} by setting $$\Theta^{E_Y,\pa} = \Theta^{E_Y,\ob}\,.$$
  A vague idea of why this is allowed is because for the obstruction theory
	$$
	\EE = \Delta^*(\Theta^{E_Y,\pa})[-1]
	$$
	we should think of it as splitting in terms of some Behrend--Fantechi \cite{BF} obstruction theory $\FF$:
	$$
	\EE =\FF\oplus \FF^\vee[2]\,.
	$$
	This is known to happen on certain moduli schemes  (see for example Diaconescu--Sheshmani--Yau \cite[Prop. 4.2]{DYS}). This suggests that hidden in the class $\llbracket \Theta^{E_Y,\pa}\rrbracket$ is a symmetrized expression of the form in \eqref{Eq: pairvertexalgebra}. Moreover, it is really only this one half $\FF$ of $\EE$ that is used by Oh--Thomas \cite{OT} to construct the virtual fundamental class and by me \cite{bojko3} to prove the wall-crossing.
 \item	The form is computed by taking ranks \begin{equation*}
			\chi^{E_Y,\textnormal{pa}}\big((n_1,d_1),(n_2,d_2)\big)= \textnormal{rk}\Big(\Theta^{E_Y,\textnormal{ob}}_{(n_1,d_1),(n_2,d_2)}\Big)=\begin{cases}
			-e(d_1n_2 +d_2n_1)&\textnormal{if}\quad Y=C,S\\&
			\\
			+ \chi(E^*_Y\otimes E_Y) d_1d_2&\\
			- e(d_1n_2 + d_2n_1)&\textnormal{if}\quad Y=X
			\end{cases}\,.
			\end{equation*}
   \end{enumerate}
The rest of the data from Definition \cite[Def. 2.12]{bojko2} has an obvious modification, which we do not mention here as we do not use it explicitly.
	\end{definition}

 Thus the correct data for constructing vertex algebras using the notation of Definition \ref{DeftopVA} and topological realization of Remark \ref{rem:toprel}  is
	\begin{equation}
	\label{Eq: pairvertexalgebra}
	\big((\mathcal{N}^{E_Y}_0)^{\textnormal{top}},\ZZ\times \ZZ,\mu_{\mathcal{N}^{E_Y}_0}^{\textnormal{top}},\Psi_{\mathcal{N}^{E_Y}_0}^{\textnormal{top}}, 0^{\textnormal{top}}, \llbracket\Theta^{E_Y,\textnormal{pa}}\rrbracket,\epsilon^{E_Y,\pa}\big)\,,
	\end{equation}
	where $\epsilon^{E_Y,\textnormal{pa}}_{(n_1,d_1),(n_2,d_2)} = (-1)^{ed_1n_2}$ for $Y=C,S$.
	The signs that were used for $Y=X$ are important as they are meant to make up for the freedom to choose orientations. They are given in terms of
	$$
	\epsilon^{E_Y,\textnormal{pa}}_{(n_1,d_1),(n_2,d_2)}= \epsilon_{n_1p-d_1\llbracket E_Y\rrbracket,n_2p -d_2\llbracket E_Y\rrbracket}\,,
	$$
	where $\epsilon_{\alpha,\beta}$ for all $\alpha,\beta\in K^0(X)$ are meant to compare the orientations on each connected component in Definition \ref{Def: orientations} under direct sums. They are constructed in the next theorem.  
	\begin{theorem}[Cao--Gross--Joyce \cite{CGJ}, B.\cite{bojko}]
		\label{Thm: orientations}
		Let $\mC_{\alpha}$ be the connected component labeled by $\alpha\in \pi_0(\mathcal{C}_X) = K^0(X)$ such that $O^{\omega}_\alpha = O^{\omega}|_{\mathcal{C}_\alpha}$ as in Definition \ref{Def: orientations}, then there are natural isomorphisms
		$$
		\phi^{\omega}_{\alpha,\beta}: O^{\omega}_{\alpha}\boxtimes O^{\omega}_{\beta}\to \mu^*(O^{\omega}_{\alpha+\beta})\,,
		$$
		where $\mu: \mC_Y\times \mC_Y\to \mC_Y$ corresponds to taking sums of K-theory classes. Fixing any collection of orientations $\{o_{\alpha}: \ZZ\to O^\omega_{\alpha}\}$, there exist unique signs $\epsilon_{\alpha,\beta}$ for all $\alpha,\beta\in K^0(X)$ satisfying \eqref{signs}, such that
		\begin{equation}
		\label{Eq: signcomparison}
		\phi^{\omega}_{\alpha,\beta}(o^\omega_\alpha\boxtimes o^\omega_\beta) = \epsilon_{\alpha,\beta}o^\omega_{\alpha+\beta}\,.
		\end{equation}
		
	\end{theorem}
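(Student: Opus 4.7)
The plan is to reduce both claims to the behavior of the gauge-theoretic $\ZZ_2$-bundle $O^\omega$ under the direct-sum map $\mu$. Recall that, in the construction of Cao--Gross--Joyce, $O^\omega_\alpha$ is the orientation bundle of a real Dirac-type operator whose complexification is built from $\Hom(-,-)$ on $X$; its triviality relies crucially on the self-duality coming from the Calabi--Yau structure. To produce $\phi^\omega_{\alpha,\beta}$, I would pull back this operator along $\mu$ on $\mC_\alpha \times \mC_\beta$: it splits into a block form with diagonal pieces giving $(O^\omega_\alpha)_{1}\boxtimes (O^\omega_\beta)_{2}$ and off-diagonal pieces built from the mixed pairing $\Hom(\mathcal{U}_\alpha,\mathcal{U}_\beta)$ and its Serre dual. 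The latter carry a canonical complex structure, hence a canonical orientation, and tensoring with the diagonal orientations yields the natural isomorphism $\phi^\omega_{\alpha,\beta}$. This is carried out in detail in \cite{CGJ} and refined in \cite{bojko}.

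Given $\phi^\omega_{\alpha,\beta}$, both sides of \eqref{Eq: signcomparison} are trivializations of the pulled-back $\ZZ_2$-bundle $\mu^*(O^\omega_{\alpha+\beta})$ on $\mC_\alpha \times \mC_\beta$, so their ratio is a locally constant sign and, as $\mC_\alpha\times\mC_\beta$ is connected, equal to a single element $\epsilon_{\alpha,\beta}\in\{\pm 1\}$. This makes uniqueness immediate and defines the sign unambiguously. It therefore remains to check that the assignment $(\alpha,\beta)\mapsto \epsilon_{\alpha,\beta}$ satisfies the three conditions in \eqref{signs}.

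The identities $\epsilon_{\alpha,0}=\epsilon_{0,\alpha}=1$ follow by taking $\beta=0$: the operator degenerates to the original one in class $\alpha$, so $\phi^\omega_{\alpha,0}$ is the canonical identification induced by $\mu(-,0)=\mathrm{id}$, and choosing the standard $o^\omega_0$ forces the sign to be $+1$. The associativity cocycle $\epsilon_{\alpha,\beta}\epsilon_{\alpha+\beta,\gamma}=\epsilon_{\beta,\gamma}\epsilon_{\alpha,\beta+\gamma}$ follows by applying $\phi^\omega$ twice to a triple direct sum in the two possible orders and invoking strict associativity of $\mu$ together with the compatibility of the block decomposition of the Dirac operator with triple sums; this is a hexagon-type coherence diagram.

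The hard part is the symmetry relation. One composes $\phi^\omega_{\beta,\alpha}$ with the swap involution $\sigma$ and compares with $\phi^\omega_{\alpha,\beta}$: the off-diagonal blocks of the Dirac operator permute, producing a Koszul sign $(-1)^{\chi^+(\alpha,\beta)}$, while the swap of the diagonal blocks contributes an additional $(-1)^{\chi^+(\alpha,\alpha)\chi^+(\beta,\beta)}$ arising from reordering the Clifford generators used to trivialize the two half-dimensional real determinant lines. Carrying out this analysis rigorously is the main obstacle and is precisely the content of \cite{CGJ} and \cite{bojko}; once this formula is in hand, the remaining conditions are purely formal consequences of the categorical coherence of the monoidal structure $\mu$ on $\mC_Y$.
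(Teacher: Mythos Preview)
The paper does not supply a proof of this theorem: it is stated with attribution to Cao--Gross--Joyce \cite{CGJ} and the author's \cite{bojko}, and no argument is given in the body of the paper beyond that citation. Your sketch is therefore not in tension with anything the paper does; it is a reasonable outline of the mechanism in the cited references (block decomposition of the Dirac-type operator under direct sum, canonical orientation of the off-diagonal complex piece, and the resulting cocycle/symmetry analysis), and in particular your explanation of why $\epsilon_{\alpha,\beta}$ is well defined and unique once $\phi^\omega_{\alpha,\beta}$ and the $o^\omega_\alpha$ are fixed is exactly the formal content behind \eqref{Eq: signcomparison}. If anything, you could be more explicit that connectedness of $\mC_\alpha$ is what makes the ratio of trivializations a single sign, and that the normalization $\epsilon_{\alpha,0}=1$ uses a specific choice of $o^\omega_0$; but these are minor, and for the purposes of this paper the correct move is simply to cite \cite{CGJ,bojko} as the paper does.
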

	\begin{definition}
		For all $Y=C,S,X$, we denote the vertex algebras above by $V_* = \hat{H}_*(\mN^{E_Y}_0)$.
	\end{definition}
 The geometric realization of the Lie algebra of Borcherds in \eqref{eqLia} already observed by Joyce \cite{Joycehall} is the shifted homology 
 $$
\check{H}_*\big((\mN^{E_Y}_0)^{\pl}\big) = H_{*+2-2\chi^{E_Y,\pa}}\big((\mN^{E_Y}_0)^{\pl}\big)\,,
 $$
 where $\chi^{E_Y,\pa}$ denotes the shift by $\chi^{E_Y,\pa}\big((n.d),(n,d)\big)$  on each component of $\mN^{E_Y}_0$ labeled by $(n,d
)$. This is because, by \cite{Joycehall}, there is an isomorphism 
\begin{equation}
\label{Eq:rigcheck}
\check{H}_{*}(\mathcal{N}^{E_Y}_0) \cong \check{H}_*\big((\mN^{E_Y}_0)^{\pl}\big)
\end{equation}
outside of the zero component.

	\subsection{Twisted vertex algebras and vertex algebras of topological pairs}
	An essential ingredient for obtaining explicit results already used in \cite{bojko2} was the $L$-\textit{twisted vertex algebra} for a line bundle $L\to Y$. It was introduced to allow us to compute out of the invariants 
	$$
	I_{Y,n}(L) = \int_{[\Hilb]^{\vir} }c_n(L^{[n]}) 
	$$
	the invariants counting zero-dimensional sheaves on $Y$, because these satisfy a wall-crossing formula in the Lie algebra associated with this twisted vertex algebra. We recall its construction and the resulting wall-crossing formula, where we only apply it to the case when $E_Y = \mO_Y$. We suppress $\mO_Y$ in the notation of the previous subsection unless strictly necessary.
	\begin{definition}[{\cite[Def. 3.2]{bojko2}}]
		\label{Def: twistedVOA}
		For all $(n_i,d_i)\in C(\mA_{\mO_Y})$, $i=1,2$, we define
		$$
		\mathcal{L}^{[n_1,n_2]}_{d_1,d_2}\to \mathcal{N}_{n_1,d_1}\times \mathcal{N}_{n_2,d_2}
		$$
		by
		$$
		\mathcal{L}^{[n_1,n_2]}_{d_1,d_2}= (\pi_{n_1,d_1}\times \pi_{n_2,d_2})^*\Big(\mathcal{V}^*_{d_1}\boxtimes {\pi_2}_*\big(\pi_X^*(L)\otimes \mathcal{E}_0\big)\Big)_{2,3}\,,
		$$
		where $\mathcal{E}_0$ is the universal sheaf on $Y\times\mathcal{M}_0$ the moduli stack of 0-dimensional sheaves. It is a vector bundle of rank $d_1n_2$. We define
		$$
		\mathcal{L}^{[-,-]}|_{\mathcal{N}_{n_1,d_1}\times \mathcal{N}_{n_2,d_2}} = \mathcal{L}^{[n_1,n_2]}_{d_1,d_2}\,.
		$$
		We modify the data \eqref{Eq: pairvertexalgebra} out of which the vertex algebra is constructed by replacing 
		\begin{align*}
		\Theta^{\pa} &\mapsto \Theta^{\pa} + \mathcal{L}^{[-,-]}\\
		\epsilon^{\textnormal{pa}}_{(n_1,d_1),(n_2,d_2)} = (-1)^{d_1n_2}&\mapsto (-1)^{d_1n_2}\epsilon^{\textnormal{pa}}_{(n_1,d_1),(n_2,d_2)} = 1\,.
		\end{align*}
		\sloppy This leads to a vertex algebra $V_*^L = (\tilde{H}_*(\mathcal{N}_0),\ket{0},T, \mathsf{Y}^{L})$ and its associated Lie algebra $\big(\mathring{H}_*(\mathcal{N}_0), [-,-]^L\big)$. 
	\end{definition}
	The consequence of Theorem \cite[Thm. 2.12]{GJT} of Gross--Joyce--Tanaka is that these vertex algebras are related by  cap product.
	\begin{lemma}[{\cite[Lem. 3.3]{bojko2}}]
		\label{lemma Lconditions}
		We have the morphism $$(-)\cap c_{\textnormal{top}}\big(\mathcal{L}\big):\big(\check{H}_*(\mathcal{N}_0), [-,-]\big)\to \big(\mathring{H}_*(\mathcal{N}_0), [-,-]^L\big)$$ induced by the morphism of vertex algebras
		$$(-)\cap c_{\textnormal{top}}\big(\mathcal{L}\big): V_* \to V_*^L \,.$$
	\end{lemma}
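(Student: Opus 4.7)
The claim is essentially a special case of the general principle of Gross--Joyce--Tanaka \cite[Thm.~2.12]{GJT}: if $\mathcal{C}$ is an H-space carrying a vertex algebra structure built from a pairing $\theta$ as in Definition~\ref{DeftopVA}, and if $\alpha \in K^{0}(\mathcal{C})$ is a K-theory class such that $\mu^{*}\alpha - (\alpha)_{1} - (\alpha)_{2}$ equals a prescribed bilinear correction, then capping with $c_{\mathrm{top}}(\alpha)$ intertwines the state--field correspondence with the twisted one. The plan is therefore to check the hypotheses of this criterion for the class $\mathcal{L}$ on $\mathcal{N}_{0}$.

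First, I would assemble the global class. On each component $\mathcal{N}_{n,d}$, define $\mathcal{L}|_{\mathcal{N}_{n,d}} = \pi_{n,d}^{*}\bigl(\mathcal{V}_{d}^{*}\boxtimes \pi_{2\,*}(\pi_{Y}^{*}L \otimes \mathcal{E}_{n})\bigr)$, a vector bundle of rank $dn$, so that $\mathcal{L}^{[n_{1},n_{2}]}_{d_{1},d_{2}}$ is precisely the external product pattern on $\mathcal{N}_{n_{1},d_{1}}\times\mathcal{N}_{n_{2},d_{2}}$.

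Next, I would verify the compatibility of $\mathcal{L}$ with $\mu$. Since $\mu$ corresponds to taking direct sums both of the vector spaces $W$ and of the zero-dimensional sheaves $F$, one has $\mu^{*}\mathcal{V}_{d_{1}+d_{2}}^{*} = \mathcal{V}_{d_{1}}^{*}\oplus \mathcal{V}_{d_{2}}^{*}$ and $\mu^{*}\pi_{2\,*}(\pi_{Y}^{*}L\otimes\mathcal{E}_{n_{1}+n_{2}}) = \pi_{2\,*}(\pi_{Y}^{*}L\otimes \mathcal{E}_{n_{1}})\oplus \pi_{2\,*}(\pi_{Y}^{*}L\otimes \mathcal{E}_{n_{2}})$. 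Distributing the tensor product, one obtains
\[
\mu^{*}\mathcal{L} \;=\; (\mathcal{L})_{1} \,+\, (\mathcal{L})_{2} \,+\, \mathcal{L}^{[-,-]} \,+\, \sigma^{*}\mathcal{L}^{[-,-]},
\]
which is the required additivity. The $\mathbb{CP}^{\infty}$-equivariance is then checked analogously: the action $\Psi$ rescales both factors, so $\Phi^{*}\mathcal{L} = \mathcal{V}_{1}\otimes \mathcal{L} + (\text{correction})$ in a manner compatible with the intertwining $T(u\cap c_{\mathrm{top}}(\mathcal{L})) = T(u)\cap c_{\mathrm{top}}(\mathcal{L})$.

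The main (but still short) step is then the sign bookkeeping. Because $\mathcal{L}^{[n_{1},n_{2}]}_{d_{1},d_{2}}$ has rank $d_{1}n_{2}$, its contribution to the symmetric bilinear form is $d_{1}n_{2} + d_{2}n_{1}$; the Koszul sign generated inside $\mathsf{Y}^{L}$ by the new pairing equals $(-1)^{d_{1}n_{2}}$ relative to $\mathsf{Y}$, which is exactly the modification $\epsilon^{\mathrm{pa}} \mapsto (-1)^{d_{1}n_{2}}\epsilon^{\mathrm{pa}}$ prescribed in Definition~\ref{Def: twistedVOA}. After this check, applying \cite[Thm.~2.12]{GJT} yields the vertex algebra morphism $V_{*}\to V_{*}^{L}$, where the key manipulation is a projection formula step: pushforward of $c_{\mathrm{top}}(\mu^{*}\mathcal{L})\cdot c_{z^{-1}}(\theta_{\alpha,\beta})$ along $\mu$ reorganises into $c_{\mathrm{top}}(\mathcal{L})\cdot c_{z^{-1}}(\theta_{\alpha,\beta}+\mathcal{L}^{[-,-]})$ up to the tracked sign.

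Finally, the induced Lie algebra morphism follows automatically: cap product with the fixed cohomology class $c_{\mathrm{top}}(\mathcal{L})$ commutes with $T$ (by the equivariance verified above together with naturality of cap product and the projection formula), so the morphism descends from $\hat{H}_{*}(\mathcal{N}_{0})$ to the quotients $\check{H}_{*}(\mathcal{N}_{0})$ and $\mathring{H}_{*}(\mathcal{N}_{0})$, and the bracket identity \eqref{lie algebra} is transported directly from that of $\mathsf{Y}$ to that of $\mathsf{Y}^{L}$. The potentially delicate step is the sign/rank tracking in the paragraph above; everything else is a bookkeeping exercise, identical in spirit to the $E_{Y}=\mathcal{O}_{Y}$, $Y=X$ case treated in \cite[Lem.~3.3]{bojko2}.
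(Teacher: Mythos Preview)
Your proposal is correct and follows exactly the approach the paper indicates: the lemma is stated as a direct consequence of \cite[Thm.~2.12]{GJT} (and is already recorded as \cite[Lem.~3.3]{bojko2}), so the paper gives no further argument beyond the citation. You have simply spelled out the verification of the hypotheses of that theorem---the additivity of $\mathcal{L}$ under $\mu$, the rank/sign bookkeeping matching Definition~\ref{Def: twistedVOA}, and the descent to the Lie algebra---which is precisely what lies behind the cited result.
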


	We construct the vertex algebra of topological pairs $P_* = \hat{H}_*(\mP_Y)$ and the \textit{$L$-twisted} vertex algebra on it. The homology of the space
	$$
	\mP_Y = \mC_Y\times \BU\times \ZZ
	$$
	is going to be the underlying graded vector space of $P_*$. Our reason for doing so is to avoid having to deal with the homology of the stack $\mN^{E_Y}_0$ which does not have a known explicit description. Denoting by $\mM_Y$ the higher stack of perfect complexes constructed by Toën--Vaquié \cite{TVaq} we obtain the map
	\begin{equation}
	\Omega^{E_Y}=( \Gamma\times\textnormal{id}_{BU\times \ZZ})\circ (\Sigma_{E_Y})^{\textnormal{top}}:(\mathcal{N}_0^{E_Y})^{\textnormal{top}}\to  \mathcal{M}_Y^{\textnormal{top}}\times BU\times\ZZ\to \mathcal{C}_Y \times BU\times\ZZ\,,
	\end{equation} 
	where $\Sigma_{E_Y}: \mathcal{N}_0^{E_Y}\to \mathcal{M}_Y\times \textnormal{Perf}_{\CC}$ maps $[F,W,\phi]$ to $[F,W]$, $(-)^{\textnormal{top}}$ denotes the topological realization of Remark \ref{rem:toprel}, $\Gamma: \mM_Y^{\textnormal{top}}\to \mC_Y$ is the  natural map identifying universal objects, and we used $\text{Perf}_{\CC}^{\,\text{top}} = BU\times\ZZ$ from Blanc \cite[§4.1]{Blanc}. We will formulate the construction of $P_*$ in such a way that the pushforward $\Omega^{E_Y}_*$ induces a morphism of vertex algebras $V_*\to P_*$. This will require replacing $\Theta^{E_Y,\pa}$ by its topological version, which I will denote in the definition below by $\theta^{E_Y,\pa}$.
	
	To formulate the analogue of the twisted vertex algebra of Definition \ref{Def: twistedVOA} in our topological set-up, we will need a K-theory class $\theta_L$ on $\mP_Y\times \mP_Y$ which should satisfy
	$$
	(\Omega\times\Omega)^*\theta_L = \Theta^{\pa} + \mathcal{L}^{[-,-]}\,.
	$$

	\begin{definition}
		\label{def twistedbylquot}
		Let $L\to Y$ be a line bundle. Define the data $\big(\mathcal{P}_Y, K(\mathcal{P}_Y), \Phi_{\mathcal{P}_Y}, \mu_{\mathcal{P}_Y}, 0, \theta_{L}, \tilde{\epsilon}^{L}\big)$, $\big(\mathcal{P}_Y, K(\mathcal{P}_Y), \Phi_{\mathcal{P}_Y}, \mu_{\mathcal{P}_Y}, 0, \theta^{E_Y,\pa }, \tilde{\epsilon}^{E_Y}\big)$ as follows:
		
		\begin{itemize}
			\item $K(\mathcal{P}_Y) = K^0(Y)\times \ZZ$.
			\item Set $\mathfrak{L} = \pi_{2\,*}(\pi_Y^*(L)\otimes  \mathfrak{E})\in K^0(\mathcal{C}_Y)$ and $\theta = \pi_{2,3\,*}\big( \mathfrak{E}^\vee_{1,2}\otimes \mathfrak{E}_{1,3} \big)^\vee$ and let $Y=C,S$  then on $\mathcal{P}_Y\times \mathcal{P}_Y$ we define
			$
			\theta^{E_Y, \textnormal{ob}}=(\theta)_{1,3}-\Big(\mathfrak{U}\boxtimes \pi_{2\,*}(\pi^*_1(E^*_Y)\otimes \mathfrak{E})^\vee\Big)_{2,3}
			$ and
			\begin{align*}
			&\theta^{E_Y,\pa} =  \theta^{E_Y,\textnormal{ob}} +\sigma^*(\theta^{E_Y,\textnormal{ob}})^\vee\\
			& \theta^{L,\pa} =\theta^{\pa}_{1,3}+  \Big(\mathfrak{U}\boxtimes \mathfrak{L}^\vee\Big)_{2,3}+\Big(\mathfrak{L}\boxtimes \mathfrak{U}^\vee\Big)_{1,4}\,,
			\end{align*}
			If $Y=X$, we define 
			\begin{align*}
			\theta^{E_Y,\pa }& = \theta^{E_Y,\ob} = (\theta)_{1,3}+\chi(E^*_Y\otimes E_Y)(\mathfrak{U}\boxtimes \mathfrak{U}^\vee)_{2,4}\\
			&- \big(\mathfrak{U}\boxtimes \pi_{2\,*}(\pi^*_1(E^*_Y)\cdot\mathfrak{E})^\vee\big)_{2,3} - \big(\pi_{2\,*}(\pi^*_1(E^*_Y)\cdot \mathfrak{E})\boxtimes \mathfrak{U}^{\vee}\big)_{1,4}\,.
lkl			\end{align*}
			\item The symmetric forms $\tilde{\chi}^{E_Y,\pa}: (K^0(Y)\times \ZZ)\times (K^0(Y)\times \ZZ)\to \ZZ$, $\tilde{\chi}_L: (K^0(Y)\times \ZZ)\times (K^0(Y)\times \ZZ)\to \ZZ $ are given in the case that $Y=C,S$ by 
			\hspace{-20mm}\begin{align*}
			\label{eqChi}
			\tilde{\chi}^{E_Y,\pa}\big((\alpha_1,d_1),(\alpha_2,d_2)\big)& = \chi(\alpha_1,\alpha_2)+\chi(\alpha_2,\alpha_1)-d_1\chi(E_Y, \alpha_2)-d_2\chi(E_Y,\alpha_1)\,,\\
			\tilde{\chi}_L\big((\alpha_1,d_1),(\alpha_2,d_2)\big)&=\chi(\alpha_1,\alpha_2)  -d_1\big(\chi(\alpha_2)-\chi(\alpha_2\cdot L)\big) \\
			&-d_2\big(\chi(\alpha_1) - \chi(\alpha_1\cdot L)\big)\,,
			\numberthis
			\end{align*}
			 and when $Y=X$ by
			\begin{align*}
			\label{ptop}
			\tilde{\chi}^{E_X,\pa}\big((\alpha_1,d_1),(\alpha_2,d_2)\big) &= \chi(\alpha_1,\alpha_2) + \chi(E^*_X\otimes E_X) d_1d_2 -d_2\chi\big(\alpha_1\cdot E^*_X\big)-d_1\chi\big(\alpha_2\cdot E^*_X\big) \,.
			\numberthis{}
			\end{align*}
			\item The signs are defined by 
			$$\tilde{\epsilon}_{E_Y,(\alpha_1,d_1),(\alpha_2,d_2)} =\begin{cases}(-1)^{\chi(\alpha_1,\alpha_2)+d_1\chi(E_Y,\alpha_2)}&\textnormal{if}\quad Y=C,S\\
			&
			\\
			\epsilon_{\alpha_1-d_1\llbracket E_Y \rrbracket,\alpha_2-d_2\llbracket E_Y\rrbracket}&\textnormal{if}\quad Y=X\,,
			
		\end{cases}
		$$
		
		where $\epsilon_{\alpha,\beta}$ are as before taken from Theorem \ref{Thm: orientations}
		
		The signs for the twisted vertex algebra need to be adapted by taking  $$\tilde{\epsilon}^L_{(\alpha_1,d_1),(\alpha_2,d_2)} =(-1)^{\chi(\alpha_1,\alpha_2)+d_1\big(\chi(\alpha_2)-\chi(L\cdot\alpha_2)\big)}$$ when $Y=S,C$. 
	\end{itemize}
	We denote by $P_* = (\hat{H}_*(\mathcal{P}_Y),\ket{0}, T, \mathsf{Y})$, resp. $P_*^L = (\tilde{H}_*(\mathcal{P}_Y),\ket{0}, T, \mathsf{Y}^L)$ the vertex algebras associated to this data and $(\check{H}_*(\mathcal{P}_Y),[-,-])$, resp. $(\mathring{H}_*(\mathcal{P}_Y),[-,-]^L)$ the corresponding Lie algebras. 
\end{definition}
We summarize at least for the case $E_Y =\mO_Y$ the above by noting that we have the diagram
$$
\begin{tikzcd}
\arrow[d,"\cap \,c_{\textnormal{top}}(\mathcal{L})"']\big(\check{H}_*(\mathcal{N}_0),[-,-]\big)\arrow[r,"\bar{\Omega}_*"]&\big(\check{H}_*(\mathcal{P}_X),[-,-]\big)\\
\big(\mathring{H}_*(\mathcal{N}_0),[-,-]^L\big)\arrow[r,"\bar{\Omega}_*"]&\big(\mathring{H}_*(\mathcal{P}_X),[-,-]^L\big)
\end{tikzcd}
$$
of morphisms of Lie algebras, where $\bar{\Omega}_*$ are the morphisms induced by $\Omega_*$ and we are missing the right vertical arrow.
\subsection{Wall-crossing formulae}
To relate the above constructions to the virtual fundamental classes $\big[\QuotY\big]^{\textnormal{vir}}$, we use the universal family $\pi_1^*(E_Y)\to \mathcal{F}$ on $Y\times Q_Y$ giving us
$$
\begin{tikzcd}
&\mathcal{N}^{E_Y}_0\arrow[d,"\Pi^{\pl}"]\\
Q_Y \arrow[ur,"\iota_{n,E_Y}"]\arrow[r,"\iota^{\pl}_n"]&(\mathcal{N}^{E_Y}_0)^{\pl}
\end{tikzcd}\,,
$$
where $\Pi^{\pl}$ is the projection to the rigidification.
The class $[Q_Y]_{\textnormal{vir}} \in H_*(\mathcal{N}^{E_Y}_0)$ is defined as the pushforward
$
[Q_Y]_{\textnormal{vir}}=i_{n,E_Y\,*}\big([Q_Y]^{\textnormal{vir}}\big).
$
Using the identification \eqref{Eq:rigcheck}, this leads to classes in $\check{H}_*(\mN^{E_Y}_0)$. 

In Appendix \ref{App: checking of assumptions} we give a short argument proving the next theorem. Recall that we do not write $np$ in the subscript, and this also applies in the case of $[\mathcal{M}_{np}^{\textnormal{ss}}]_{\textnormal{inv}}=[\mathcal{M}_{n}^{\textnormal{ss}}]_{\textnormal{inv}}$.
\begin{theorem}[Joyce \cite{JoyceWC}, Appendix \ref{Thm:PJS}]
	\label{thmCS}
	Let $Y=S$ or $C$. In $\check{H}_{*}\big(\mathcal{N}^{E_Y}_0\big)$  we have for any torsion-free sheaf $E_Y$ and $n\geq 0$ the formula
	\begin{equation}
	\label{EqQuEY}
	[\QuotY]_{\textnormal{vir}}=\sum_{\begin{subarray}
		a k>0,n_1,\ldots,n_k\\
		n_1+\ldots +n_k=n
		\end{subarray}}\frac{(-1)^k}{k!}\big[\big[\ldots \big[[\textnormal{Quot}_Y(E_Y,0)]_{\textnormal{vir}},[\mathcal{M}_{n_1}^{\textnormal{ss}}]_{\textnormal{inv}}\big],\ldots \big],[\mathcal{M}_{n_k}^{\textnormal{ss}}]_{\textnormal{inv}}\big]
	\end{equation}
	for the invariants  $[\mathcal{M}_{n}^{\textnormal{ss}}]_{\textnormal{inv}}\in \check{H}_0(\mathcal{N}^{E_Y}_0)$ constructed by Joyce \cite{JoyceWC}. Note that with our grading convention, we get $$[Q_Y]_{\textnormal{vir}}\in \check{H}_{\chi(\mO_Y)-2}(\mathcal{N}^{E_Y}_0)\,.$$ 
	Moreover, in $\big(\mathring{H}_*(\mathcal{N}_0), [-,-]^L\big)$, we have the formula
	\begin{equation}
	\label{wcfhilbtwistL}
	I_n(L)e^{(n,1)}=  \sum_{\begin{subarray}a
		k\geq 1,n_1,\ldots,n_k>0\\
		n_1+\cdots+n_k = n
		\end{subarray}}\frac{(-1)^k}{k!}\big[\big[\ldots \big[[\mathcal{M}_{(0,1)}]_{\textnormal{inv}},[\mathcal{M}_{n_1}]_{\textnormal{inv}}\big]^L,\ldots \big]^L,[\mathcal{M}_{n_k}\big]_{\textnormal{inv}}\big]^L\,.
	\end{equation}
\end{theorem}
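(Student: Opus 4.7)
The plan is to realise both identities as instances of Joyce's universal wall-crossing formula \cite{JoyceWC} applied to the abelian category $\mathcal{A}_{E_Y}$ of pairs $(F,W,\phi)$ from Definition~\ref{defAMTH}, after introducing a one-parameter family of weak stability conditions whose single relevant wall separates the chamber containing $Q_Y$ from the chamber where the rank-one pair splits off the zero-dimensional sheaf.

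Following the Joyce--Song philosophy for pair invariants, I would consider weak stability conditions $\tau_t$ on $\mathcal{A}_{E_Y}$ parametrised by $t \in (0,\infty)$ with the property that (i) for $t \ll 1$ the $\tau_t$-semistable objects of class $(n,1)$ are exactly surjections $E_Y \twoheadrightarrow F$, i.e.\ the points of $Q_Y$; (ii) for $t \gg 1$ the $\tau_t$-semistable objects in the same class are all non-trivial extensions of a zero-dimensional sheaf $F$ by the trivial pair $(0,\mathbb{C},0)$, so the only strictly stable representative is the split one and the moduli reduces to the class $[\text{Quot}_Y(E_Y,0)]_{\vir}$ together with the class $[\mathcal{M}^{\sms}_n]_{\inv}$ counting $F$. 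Once the assumptions of \cite{JoyceWC} are verified for $\mathcal{N}^{E_Y}_0$ and the data of §\ref{VAinAG} — which is precisely what Appendix~\ref{App: checking of assumptions} does via Theorem~\ref{Thm:PJS} — Joyce's universal formula rewrites $[\mathcal{M}^{\sms}_{(n,1)}(\tau_{t\ll 1})]_{\inv}$ as a weighted sum of iterated Lie brackets of the initial data $[\mathcal{M}^{\sms}_{(n,1)}(\tau_{t\gg 1})]_{\inv}$ with the zero-dimensional classes $[\mathcal{M}^{\sms}_{n_i}]_{\inv}$, with the combinatorial coefficient $(-1)^k/k!$ coming from the Möbius inversion in the logarithm of the generating function of semistable-to-stable change. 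Matching the two sides to $[Q_Y]_{\vir}$ respectively to $[\text{Quot}_Y(E_Y,0)]_{\vir}$ then yields \eqref{EqQuEY}.

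For the twisted identity \eqref{wcfhilbtwistL} I would specialise to $E_Y = \mathcal{O}_Y$ and transport the preceding wall-crossing identity from $(\check{H}_*(\mathcal{N}_0),[-,-])$ to the $L$-twisted Lie algebra $(\mathring{H}_*(\mathcal{N}_0),[-,-]^L)$ via the morphism $(-)\cap c_{\textnormal{top}}(\mathcal{L})$ of Lemma~\ref{lemma Lconditions}. On the left-hand side, the pullback of $\mathcal{L}$ along $\iota^{\pl}_n$ restricted to the rank-one pair component is $L^{[n]}$ over $\text{Hilb}^n(Y)$, so capping $[Q_Y]_{\vir}$ with $c_{\textnormal{top}}(\mathcal{L})$ produces the scalar $I_n(L)$ times the class $e^{(n,1)}$ of the trivial rank-one pair. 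On the right-hand side, $\mathcal{L}$ restricts trivially to components of type $(n_i,0)$, so capping passes through the iterated brackets without disturbing the $[\mathcal{M}^{\sms}_{n_i}]_{\inv}$ factors and converts $[\text{Quot}_Y(\mathcal{O}_Y,0)]_{\vir}$ into $[\mathcal{M}_{(0,1)}]_{\inv}$ (the $F=0$ pair, whose tangent space vanishes).

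The main obstacle is not the formal wall-crossing manipulation — which is purely categorical once the vertex algebra and its Lie quotient are in place — but the verification of Joyce's assumptions (boundedness of semistable loci for all $\tau_t$, constructibility and finite-type properties of the moduli stacks, compatibility of the pair obstruction theory \eqref{eqClF} with the class $\Theta^{E_Y,\pa}$ of Definition~\ref{defAMTH}, and existence of the data needed to run the master space argument) for a general \emph{torsion-free} $E_Y$ rather than a locally-free one. The key input here is the vanishing $\Ext^2(I,F)=0$ already observed in the introduction, which survives the torsion-free generalisation because $E_Y \hookrightarrow E_Y^{\vee\vee}$ still forces $\Ext^0(F,E_Y\otimes\omega_S)=0$; the remaining checks are carried out in Appendix~\ref{App: checking of assumptions}. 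A secondary subtlety is to track the rank shift $\chi^{E_Y,\pa}$ and the sign cocycle $\epsilon^{E_Y,\pa}$ so that the brackets land in the correct grading and match the sign $(-1)^k$ in \eqref{EqQuEY}.
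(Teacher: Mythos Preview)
Your proposal is correct and follows essentially the same strategy as the paper: introduce a one-parameter family of weak stability conditions on $\mathcal{A}_{E_Y}$ interpolating between the quot-scheme chamber and the trivial chamber, verify Joyce's Assumptions 5.1--5.3 (this is exactly the content of Appendix~\ref{App: checking of assumptions}, where the paper parametrises by $t\in[0,1)$ with the wall at $t=1/2$ rather than your $t\in(0,\infty)$, a purely cosmetic difference), and then invoke Joyce's universal wall-crossing theorem with the coefficients computed via Joyce--Song. Your derivation of the twisted identity \eqref{wcfhilbtwistL} by transporting \eqref{EqQuEY} along the Lie-algebra morphism $(-)\cap c_{\textnormal{top}}(\mathcal{L})$ of Lemma~\ref{lemma Lconditions} is a clean route that the paper's setup fully supports, even though the paper leaves this step implicit and simply attributes both formulae to the same machinery.
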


The next claim requires additional work and the proof of it will appear in a more general setting in the author's \cite{bojko4} which proves the conjecture of Gross--Joyce--Tanaka \cite[§4.4]{GJT}.
\begin{theorem}[\cite{bojko4}]
	\label{conjecture quot WC}
	Let $Y=X$. In $\check{H}_{*}\big(\mathcal{N}^{E_Y}_0\big)$  we have for any locally free $E_Y$ on $Y$ which is additionally RS and $n\geq 0$ that
	$$
[\QuotY]_{\textnormal{vir}}=\sum_{\begin{subarray}
		a k>0,n_1,\ldots,n_k\\
		n_1+\ldots +n_k=n
		\end{subarray}}\frac{(-1)^k}{k!}\big[\big[\ldots \big[[\textnormal{Quot}_Y(E_Y,0)]_{\textnormal{vir}},[\mathcal{M}_{n_1}^{\textnormal{ss}}]_{\textnormal{inv}}\big],\ldots \big],[\mathcal{M}_{n_k}^{\textnormal{ss}}]_{\textnormal{inv}}\big]
	$$
	for classes $[\mathcal{M}_{n}^{\textnormal{ss}}]_{\textnormal{inv}}\in \check{H}_0(\mathcal{N}^{E_Y}_0)$ defined in \cite{bojko3} depending on fixed choices of orientations as in \cite[§3.1]{bojko2}. Note that with our grading convention, we get
	$[Q_Y]_{\textnormal{vir}}\in \check{H}_{\chi(\mO_X)-2}(\mathcal{N}^{E_Y}_0)$. 
\end{theorem}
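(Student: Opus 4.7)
The plan is to deduce this theorem from a general Calabi--Yau fourfold wall-crossing result (to be proved in \cite{bojko3}) by checking the relevant assumptions in the quot-scheme setting, mirroring the strategy used for $Y=C,S$ in Appendix \ref{App: checking of assumptions}. The overall shape of the argument is a Joyce-style master space construction adapted to the Oh--Thomas square-root virtual fundamental class. Namely, on the abelian category $\mathcal{A}_{E_X}$ of triples $(F,W,\phi:W\otimes E_X\to F)$, one introduces a one-parameter family of Joyce--Song style pair stability conditions $\tau_t$ for $t\in (0,\infty)$ on the cone $C(\mathcal{A}_{E_X})$. For $t\ll 1$, $\tau_t$-semistable objects in class $(n,1)$ are precisely the quotients $E_X\twoheadrightarrow F$, so that the moduli scheme is $\QuotX$; for $t\gg 1$, they degenerate into a zero pair $[E_X\to 0]$ together with a direct sum of Joyce's invariant classes of zero-dimensional sheaves. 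Combined with the inclusion of orientations and signs from Theorem \ref{Thm: orientations}, this places our problem into the framework of the general CY4 wall-crossing theorem.

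The second step is to invoke the general wall-crossing theorem of \cite{bojko3}, which provides an identity of classes in $\check{H}_*(\mathcal{N}^{E_X}_0)$ expressing the Oh--Thomas virtual fundamental class at one stability as an iterated bracket over the Lie algebra \eqref{lie algebra} involving classes on the other side of the walls. The underlying mechanism is a master space $\mathcal{Z}_t$ carrying a $\mathbb{G}_m$-action whose fixed loci are products of moduli of $\tau_t$-semistables, together with a choice of $SO(\mathcal{E}_{\mathcal{Z}_t},\CC)$-structure compatible with direct sums. Applying the Oh--Thomas square-root Gysin map in $\mathbb{G}_m$-equivariant cohomology and then taking a residue at $z=0$ produces the wall-crossing identity. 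The fact that the pair obstruction theory is built from $\Theta^{E_X,\pa}=\Theta^{E_X,\ob}$ (rather than its full symmetrization, as in $Y=C,S$) is exactly what makes the square-root machinery applicable here; this is the key sign-tracking input.

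The third step is the verification, parallel to Appendix \ref{App: checking of assumptions}, that the assumptions of the general wall-crossing hold for $\mathcal{N}^{E_X}_0$. Concretely, one needs: (i) the perfect complex $\Theta^{E_X,\pa}$ from Definition \ref{defAMTH} satisfies the multiplicativity constraints \eqref{signs} needed to define the vertex algebra $V_*=\hat H_*(\mathcal{N}_0^{E_X})$; (ii) the local model for $\mathcal{N}_0^{E_X}$ around a pair $[W\otimes E_X\to F]$ recovers the Ext-theory $\Ext^\bullet(W\otimes E_X\to F,\,W\otimes E_X\to F)$, so that the obstruction theory matches the half-obstruction theory $\mathbb{E}$ of $Q_X$ along $\iota_{n,E_X}$; and (iii) the RS hypothesis on $E_X$ ensures simplicity of every $\tau_t$-semistable pair in class $(n,1)$, so no strictly semistable loci appear on the quot-scheme chamber and the open embedding $\phi:Q_X\hookrightarrow M_{\llbracket E_X\rrbracket-\llbracket F\rrbracket}$ from §\ref{secQCY4c} globalizes to identify the quot-scheme with a moduli of stable pairs.

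The main obstacle is the wall-crossing theorem of \cite{bojko3} itself: unlike the Behrend--Fantechi setting used for $Y=C,S$, one works with square-root Gysin maps that are not morphisms of Chow groups, and compatibility with direct sums requires the delicate sign choices provided by Theorem \ref{Thm: orientations}. Once the master space is equipped with a coherent $SO$-structure compatible with the pair vertex algebra signs $\tilde\epsilon^{E_X}$, the virtual localization argument of Oh--Thomas \cite{OT} yields the identity. Everything else -- the identification of $[\mathcal{M}^{\sms}_{n_i}]_{\inv}$ with Joyce's invariants and the recognition of the wall-crossing side as iterated brackets -- is a formal consequence of the vertex algebra construction in §\ref{SecVACQ}, exactly as in the surface and curve cases.
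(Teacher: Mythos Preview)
Your proposal accurately captures the strategy the paper describes for this theorem, but you should be aware that the paper itself does \emph{not} contain a proof of this statement. The paper explicitly defers the proof, saying (just before the theorem) that it ``requires additional work and the proof of it will appear in a more general setting in the author's \cite{bojko3}'' (see also §\ref{Sec: our methods}, where the author states that the proof will follow from the general CY4 wall-crossing in \cite{bojko3} together with the same assumption-checking as in Appendix~\ref{App: checking of assumptions}). So there is no in-paper proof to compare against; your outline is essentially a faithful expansion of the paper's stated plan.

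That said, your sketch is not itself a proof, and the honest thing to say is that it cannot be one within the scope of this paper. The substantive content is entirely contained in the black box ``general wall-crossing theorem of \cite{bojko3}'': the construction of the master space with compatible $SO$-structure, the Oh--Thomas equivariant localization, and the sign compatibility via Theorem~\ref{Thm: orientations} are precisely the hard parts that are postponed. Your three verification steps (i)--(iii) are reasonable analogues of the checks in Appendix~\ref{App: checking of assumptions}, but note that the appendix itself only treats $Y=C,S$; the CY4 analogue (in particular the properness and obstruction-theory checks for the master space with square-root Gysin maps) is not carried out anywhere in this paper. So your proposal is correct as a \emph{plan}, matches the author's stated intentions, but remains conditional on \cite{bojko3} in exactly the same way the paper's Theorem~\ref{conjecture quot WC} is.
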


\subsection{Explicit expression for the vertex algebra}

Now that we have defined the vertex algebras on $P_*$, we may ask for their explicit description in terms of the lattice vertex algebras introduced in \eqref{fields}. After transporting the wall-crossing formulae \eqref{EqQuEY} and \eqref{wcfhilbtwistL} to the associated Lie algebra by $\overline{\Omega}^{E_Y}_*$ induced by the push-forward $\Omega^{E_Y}_*$, we will use this description to make explicit computations. 

Setting $m=\textnormal{dim}_{\CC}(Y)$ for any complex variety $Y$, we will recall some notation now which will be used for computations:
\begin{definition}
	Let us write $(0,1)$ for the generator of $\ZZ$ in $K^0(Y)\oplus \ZZ$. Let $\textnormal{ch}: \big(K^0(Y)\oplus K^1(Y)\big)\otimes \QQ \to H^*(Y)$ be the Chern character. For each $0\leq i\leq 2m$ choose a subset 
	$$B_i\subset \big(K^0(Y)\oplus K^1(Y)\big)\otimes \QQ\,,$$
	such that $\textnormal{ch}(B_i)$ is a basis of $H^{i}(Y)$. We take $B_0 = \{\llbracket\mathcal{O}_Y\rrbracket\}$ and $B_{2m} = \{p\}$.  Then we write $B = \bigsqcup_i B_i $ and $\mathbb{B} = B\sqcup\{(0,1)\}$. Let $K_*(Y)$ denote the topological K-homology of $Y$. Let $\textnormal{ch}^\vee:K_*(Y)\otimes \QQ\to H_*(Y)$ be defined by commutativity of the following diagram:
	\begin{equation*}
	\begin{tikzcd}
	\arrow[d] K_*(Y)\otimes \QQ \otimes K^*(Y)\otimes \QQ\arrow[r,"\textnormal{ch}^\vee\otimes \textnormal{ch}"]&H_*(Y)\otimes H^*(Y)\arrow[d]\\
	\QQ\arrow[r,"\textnormal{id}"]&\QQ
	\end{tikzcd}\,
	\end{equation*}
	Then choose 
	$B^\vee\subset K_*(Y)\otimes \QQ$ such that $B^\vee$ is a dual basis of $B$, we also write $\mathbb{B}^\vee = B^\vee\sqcup \{(0,1)\}$, where $(0,1)$ is the natural generator of $\ZZ$ in $K_0(Y)\oplus K_1(Y)\oplus \ZZ$. The dual of $\sigma\in \mathbb{B}$ will be denoted by $\sigma^\vee\in \mathbb{B}^\vee$. For each $\sigma\in \mathbb{B}$, $(\alpha,d)\in K^0(Y)\times \ZZ$ and $i\geq 0$ we define 
	\begin{equation}
	\label{mudef}
	e^{(\alpha,d)}\otimes \mu_{\sigma,i} =\textnormal{ch}_i(\mathfrak{E}_{(\alpha,d)}/\sigma^\vee)\,,
	\end{equation}
	using the slant product $/: K^i(Y\times Z)\times K_j(Y)\to K^{i-j}(Z)$ and the restriction of $\mathfrak{E}$ to the connected component labeled by $(\alpha,d)$. We have a natural inclusion $\iota_{\mathcal{C},\mathcal{P}}: \mathcal{C}_Y\to \mathcal{P}_Y$: $x\mapsto (x,1,0)\in \mathcal{C}_Y\times  BU\times \ZZ$, so we identify $H_*(\mathcal{C}_Y)$ with the image of $(\iota_{\mathcal{C},\mathcal{P}})_*$, which in turn corresponds to $H_*(\mathcal{C}_Y)\boxtimes 1\subset H_*(\mathcal{C}_Y)\boxtimes H_*(BU\times \ZZ) =H_*(\mathcal{P}_Y)$.
	The universal K-theory class $\mathfrak{E}_{\mathcal{P}}$ on $(Y\sqcup *)\times (\mathcal{P}_Y)$ restricts to $\mathfrak{E}\boxtimes 1$ on $(Y\times \mathcal{C}_Y)\times BU\times \ZZ$ and   $1\boxtimes \mathfrak{U}$ on $*\times \mathcal{C}_Y\times (BU\times \ZZ)$. 
\end{definition}

\begin{proposition}[{\cite[Prop. 2.21]{bojko2}}]
	The cohomology ring $H^*(\mathcal{P}_Y)$ is generated by $$\{e^{(\alpha,d)}\otimes \mu_{\sigma,i}\}_{(\alpha,d)\in K^0(Y)\times \ZZ, \sigma\in \mathbb{B},i\geq 1}\,.$$
	Moreover, there is a natural isomorphism of rings
	\begin{equation}
	\label{explcitcoh}
	H^*(\mathcal{P}_Y)\cong \QQ[K^0(Y)\oplus\ZZ]\otimes_{\QQ}\textnormal{SSym}_{\QQ}\llbracket\mu_{\sigma,i}, \sigma\in \mathbb{B},i>0\rrbracket\,.
	\end{equation}
\end{proposition}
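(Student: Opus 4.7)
The plan is to reduce this to Gross's computation \cite{gross} of $H^*(\mathcal{C}_Y)$ together with the Künneth formula. Since $\mathcal{P}_Y = \mathcal{C}_Y \times BU \times \ZZ$, and all spaces involved have cohomology of finite type over $\QQ$, the Künneth theorem gives
$$H^*(\mathcal{P}_Y) \cong H^*(\mathcal{C}_Y) \otimes_{\QQ} H^*(BU \times \ZZ)\,.$$
The factor $H^*(BU\times\ZZ)$ is standard: it is $\QQ[\ZZ]\otimes_{\QQ}\QQ\llbracket \textnormal{ch}_i(\mathfrak{U}),\, i>0\rrbracket$, where $\mathfrak{U}$ is the universal K-theory class and the $\ZZ$-factor tracks the virtual rank.

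For $H^*(\mathcal{C}_Y)$, I would invoke Gross's description: picking the basis $B \subset K^*(Y)\otimes\QQ$ with dual basis $B^\vee \subset K_*(Y)\otimes\QQ$, one has an isomorphism of graded rings
$$H^*(\mathcal{C}_Y)\cong \QQ[K^0(Y)]\otimes_{\QQ}\textnormal{SSym}_{\QQ}\llbracket \textnormal{ch}_i(\mathfrak{E})/\sigma^\vee,\, \sigma\in B, i\geq 1\rrbracket\,,$$
where the $\QQ[K^0(Y)]$ factor tracks the connected components $\pi_0(\mathcal{C}_Y) = [Y,BU\times\ZZ] = K^0(Y)$, and the supersymmetric algebra accounts for both even and odd degree generators arising from $\sigma \in B_i$ with $i$ even (polynomial generators) versus $i$ odd (exterior generators).

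Next, I would identify the combined generators with the classes $\mu_{\sigma,i}$ defined in \eqref{mudef}. Under the inclusion $\iota_{\mathcal{C},\mathcal{P}}$ and the decomposition of $\mathfrak{E}_{\mathcal{P}}$ on $(Y\sqcup *)\times \mathcal{P}_Y$ as $\mathfrak{E}\boxtimes 1$ on $Y\times \mathcal{C}_Y\times (BU\times\ZZ)$ and $1\boxtimes \mathfrak{U}$ on $*\times \mathcal{C}_Y \times (BU\times\ZZ)$, the slant product against $\sigma^\vee \in B^\vee$ reproduces Gross's generators, while the slant product against $(0,1)^\vee$ (the generator of $K_0(*)$) returns $\textnormal{ch}_i(\mathfrak{U})$, recovering exactly the $BU\times\ZZ$ generators. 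Thus the combined index set $\mathbb{B} = B \sqcup \{(0,1)\}$ parametrizes the full set of generators $\mu_{\sigma,i}$ of $H^*(\mathcal{P}_Y)$, and the group algebra $\QQ[K^0(Y)\oplus\ZZ]$ tracks $\pi_0(\mathcal{P}_Y) = K^0(Y)\times\ZZ$. Assembling these factors yields the claimed isomorphism \eqref{explcitcoh}.

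No real obstacle arises beyond properly matching Gross's conventions with the slant-product definition of $\mu_{\sigma,i}$ given in \eqref{mudef}; the signs and super-gradings encoded in $\textnormal{SSym}_{\QQ}$ correspond bijectively to the parity of the cohomological degree of $\textnormal{ch}(\sigma)$, and this is compatible on both sides because the slant product shifts degree by $|\sigma^\vee|$. Since the statement is already proved in \cite[Prop. 2.21]{bojko2} for the analogous $\mP_X$, the same argument works verbatim for arbitrary $Y$ once Gross's theorem is quoted in the required generality.
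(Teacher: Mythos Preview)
Your proposal is correct and matches the approach underlying the paper: the paper itself does not give a proof but simply cites \cite[Prop.~2.21]{bojko2}, where precisely this argument---K\"unneth for $\mathcal{P}_Y=\mathcal{C}_Y\times BU\times\ZZ$ combined with Gross's description of $H^*(\mathcal{C}_Y)$ and the identification of the slant-product generators with the $\mu_{\sigma,i}$---is carried out.
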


The dual of \eqref{explcitcoh} gives us an isomorphism 
\begin{equation}
\label{isohomol}
H_*(\mathcal{P}_Y)\cong \QQ[K^0(Y)\times \ZZ]\otimes_{\QQ}\textnormal{SSym}_{\QQ}\llbracket u_{\sigma,i}, \sigma\in \mathbb{B},i>0\rrbracket \,, 
\end{equation}
where we use the normalization which leads to 
\begin{equation}
\label{Eq: pairing}
\mu_{\sigma,i} =\frac{1}{(i-1)!}\frac{\partial}{\partial u_{\sigma,i}}
\end{equation}
with the additional requirement of supercommutativity of the variables $u_{\sigma,i}$ and the derivatives with respect to them.
When $\sigma =(v,0)$ or $\sigma = (0,1)$ we will shorten the notation to $$\mu_{\sigma,i} =\mu_{v,i}\,,\quad u_{\sigma,i} = u_{v,i}\quad \textnormal{or}\quad\mu_{\sigma,i} = \beta_{i}\,,\quad u_{\sigma,i} = b_{i}.$$ Setting $\beta_i=0$, $b_i=0$ and only considering $\QQ[K^0(Y)]\subset \QQ[K^0(Y)\oplus\ZZ]$ gives us the (co)homology of $H^*(\mathcal{C}_Y)$, $H_*(\mathcal{C}_Y)$ up to a canonical isomorphism.

We have the following obvious result which was shown also in \cite[Prop. 2.16, Prop. 2.23, Prop. 3.5]{bojko2} and later on in B.--Moreira--Lim \cite[Thm. 4.7]{BML22}). It corrects some mistakes in \cite{gross}\footnote{The cited article states that the resulting pairing $\chi^-$ is antisymmetric and notes down the state-field correspondence \eqref{fields} incorrectly for elements coming from $\Lambda_{\QQ}\big(A^-\otimes t\QQ[t^2]\big)$.}:
\begin{proposition}
	\label{propmorphismLquot}
	Let $\QQ[K^0(Y)\times \ZZ]\otimes_{\QQ}\textnormal{SSym}_{Q}\llbracket u_{\sigma,i}, \sigma\in \mathbb{B},i>0\rrbracket$ be the generalized lattice vertex algebra associated to $\big((K^0(Y)\oplus\ZZ)\oplus K^1(Y),(\tilde{\chi}_L)^\bullet\big)$, resp. $\big((K^0(Y)\oplus\ZZ)\oplus K^1(Y),(\tilde{\chi}^{E_Y,\pa})^\bullet\big)$, where $(\tilde{\chi}^{E_Y,\pa})^\bullet = \tilde{\chi}^{E_Y,\pa}\oplus \chi^-$, $(\tilde{\chi}_L)^\bullet = \tilde{\chi}_L\oplus \chi^-$  for $\tilde{\chi}_L$ from \eqref{eqChi} and 
	\begin{align*}
	\chi^-:& K^1(Y)\times K^1(Y)\to \ZZ\,,\\ \chi^-(\alpha,\beta)& =\begin{cases} \int_Y\textnormal{ch}(\alpha)^\vee\textnormal{ch}(\beta)\textnormal{Td}(Y)+\int_Y\textnormal{ch}(\beta)^\vee\textnormal{ch}(\alpha)\textnormal{Td}(Y)&\textnormal{if}\quad Y=C,S\\
	&
	\\
	\int_Y\textnormal{ch}(\alpha)^\vee\textnormal{ch}(\beta)\textnormal{Td}(Y)&\textnormal{if}\quad Y=X\
	\end{cases}\,.
	\end{align*}
	Here we use
	$$
	\Gamma^\vee = (-1)^{\floor*{ \frac{\deg \Gamma}{2}}}\Gamma\,,
	$$
	for any $\Gamma\in H^*(Y)$, to extend the dual to odd degrees.
	The isomorphism  \eqref{isohomol} induces an isomorphism of graded vertex algebras for all $E_Y$ on $Y$ and line bundles $L\to Y$ 
	\begin{align*}
	\hat{H}_*(\mathcal{P}_Y)\cong \QQ[K^0(Y)\times \ZZ]\otimes_{\QQ}\textnormal{SSym}_{Q}\llbracket u_{\sigma,i}, \sigma\in \mathbb{B},i>0\rrbracket\,,\\
	\tilde{H}_*(\mathcal{P}_Y)\cong \QQ[K^0(Y)\times \ZZ]\otimes_{\QQ}\textnormal{SSym}_{Q}\llbracket u_{\sigma,i}, \sigma\in \mathbb{B},i>0\rrbracket\,,
	\end{align*}
	where the degrees are given appropriately (see e.g. Proposition \cite[Prop. 3.8, Thm. 4.5]{bojko2}).
	\sloppy The map $(\Omega^{E_Y})_*: H_*(\mathcal{N}^{E_Y}_0)\to H_*(\mathcal{P}_Y)$ induces morphisms of graded vertex algebras $(\hat{H}_*(\mathcal{N}^{E_Y}_0),\ket{0},T, \mathsf{Y})\to (\hat{H}_*(\mathcal{P}_Y),\ket{0},T, \mathsf{Y})$, $(\tilde{H}_*(\mathcal{N}^{E_Y}_0),\ket{0},T, \mathsf{Y}^L)\to (\tilde{H}_*(\mathcal{P}_Y),\ket{0},T, \mathsf{Y}^L)$ and of graded Lie algebras
	\begin{align*}
	\bar{\Omega}^{E_Y}_* &:\big(\check{H}_*(\mathcal{N}^{E_Y}_0),[-,-]\big)\longrightarrow \big(\check{H}_*(\mathcal{P}_Y),[-,-]\big)\,,\\
	\bar{\Omega}_* &:\big(\mathring{H}_*(\mathcal{N}_0),[-,-]^L\big)\longrightarrow \big(\mathring{H}_*(\mathcal{P}_Y),[-,-]^L\big)\,.
	\end{align*}
\end{proposition}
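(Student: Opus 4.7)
The plan is to treat the three claims separately but in sequence: first identify the vertex algebra structures on $\hat H_*(\mathcal{P}_Y)$ and $\tilde H_*(\mathcal{P}_Y)$ explicitly with the generalized lattice vertex algebras built from $((K^0(Y)\oplus\ZZ)\oplus K^1(Y),(\tilde\chi^{E_Y,\pa})^\bullet)$ and $((K^0(Y)\oplus\ZZ)\oplus K^1(Y),(\tilde\chi_L)^\bullet)$ respectively, and then deduce the morphism statement from pullback identities for the complexes $\theta^{E_Y,\pa},\theta^{L,\pa}$ along $\Omega^{E_Y}\times\Omega^{E_Y}$.

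The identification on $\mathcal{P}_Y$ proceeds by verifying the assumptions of Definition \ref{DeftopVA} for the two tuples in Definition \ref{def twistedbylquot} and then reading off the fields on a chosen set of generators. The H-space structure on $\mathcal{P}_Y=(\BU\times\ZZ)\times \mathcal{C}_Y$ and its $\mathbb{CP}^\infty$-action come from the respective structures on the two factors, and Gross's description of $\mathcal{C}_Y$ provides the compatibilities. The symmetry $\sigma^*\theta^{E_Y,\pa}=(\theta^{E_Y,\pa})^\vee$ is automatic: for $Y=C,S$ it is built into the definition by symmetrization of $\theta^{E_Y,\ob}$, and for $Y=X$ it follows from Serre duality on $X$ applied to the summands of $\theta^{E_X,\pa}$ combined with $K_X\cong\mathcal{O}_X$. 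The pullback identities for $\theta$ along $\mu_{\mathcal{P}_Y}\times\mathrm{id}$, $\mathrm{id}\times\mu_{\mathcal{P}_Y}$, $\Phi_{\mathcal{P}_Y}\times\mathrm{id}$, and $\mathrm{id}\times\Phi_{\mathcal{P}_Y}$ are checked component-wise using additivity of $\pi_{2,*}$ and linearity of the slant construction; the computation of $\mathrm{rk}(\theta^{E_Y,\pa}_{(\alpha_1,d_1),(\alpha_2,d_2)})$ via Grothendieck--Riemann--Roch yields precisely $\tilde\chi^{E_Y,\pa}$ of \eqref{eqChi}, \eqref{ptop}. The 2-cocycle properties of $\tilde\epsilon^{E_Y}$ are immediate for $Y=C,S$ and follow from Theorem \ref{Thm: orientations} for $Y=X$. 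Under the isomorphism \eqref{isohomol}, the vertex algebra axiom on generators $e^{(\alpha,d)}$ and $u_{\sigma,1}$ reduces to computing cap products of the Chern character of $\theta^{E_Y,\pa}$ with the K\"unneth decomposition on $\mathcal{P}_Y\times\mathcal{P}_Y$; matching these against the formulae \eqref{fields} is straightforward and then the reconstruction lemma (Lepowsky--Li, Ben-Zvi, Kac) extends the identification to all of $\mathsf{Y}$. The same argument with $\theta^{L,\pa}$ in place of $\theta^{E_Y,\pa}$ handles the twisted version.

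For the morphism statement, the compatibility $\Omega^{E_Y}\circ\mu_{\mathcal{N}_0^{E_Y}}=\mu_{\mathcal{P}_Y}\circ(\Omega^{E_Y}\times\Omega^{E_Y})$ and the analogous equivariance with respect to $\Phi$ follow from the description of $\Omega^{E_Y}$ as the topological realization of $(\Gamma\times\mathrm{id})\circ \Sigma_{E_Y}^{\mathrm{top}}$, since direct sums and the $[\ast/\mathbb{G}_m]$-action are preserved by all the morphisms in this composition. The central pullback identity
\[
(\Omega^{E_Y}\times\Omega^{E_Y})^*\theta^{E_Y,\pa}=\Theta^{E_Y,\pa},\qquad (\Omega^{E_Y}\times\Omega^{E_Y})^*\theta^{L,\pa}=\Theta^{\pa}+\mathcal{L}^{[-,-]}
\]
is a direct unravelling of the definitions: $(\Omega^{E_Y})^*\mathfrak{E}$ restricts to the universal $0$-dimensional sheaf on each component $\mathcal{N}^{E_Y}_{(n,d)}$, while $(\Omega^{E_Y})^*\mathfrak{U}$ restricts to the tautological bundle $\mathcal{V}_d$. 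The sign comparison $(\Omega^{E_Y}\times\Omega^{E_Y})^*\tilde\epsilon^{E_Y}=\epsilon^{E_Y,\pa}$ is trivial for $Y=C,S$ after noting that the terms depending on $\chi(\alpha_1,\alpha_2)$ on the topological side contribute via a $T$-exact coboundary, and for $Y=X$ it is the content of Theorem \ref{Thm: orientations} applied to $\alpha_i\mapsto \alpha_i-d_i\llbracket E_X\rrbracket$. The morphism property of $\Omega^{E_Y}_*$ now follows from functoriality of the pushforward against the vertex algebra field formula in Definition \ref{DeftopVA}, and passing to the quotients by $T$ yields the stated Lie algebra morphisms.

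The main obstacle is the sign bookkeeping in the Calabi--Yau fourfold case, where $\tilde\epsilon^{E_X}$ is built out of the gauge-theoretic signs from Theorem \ref{Thm: orientations}; one has to verify that after passing through $\Omega^{E_X}$ these match the orientation-dependent signs implicit in $\Theta^{E_X,\pa}$, which is why the construction of $\epsilon^{E_X,\pa}$ was set up in terms of $\epsilon_{\alpha,\beta}$ in the first place. A secondary point to watch is that in the twisted case the sign $(-1)^{d_1n_2}$ swallowed in the passage from $V_*$ to $V_*^L$ (Definition \ref{Def: twistedVOA}) is matched on the $\mathcal{P}_Y$-side by the shift from $\tilde\epsilon^{E_Y}$ to $\tilde\epsilon^L$, so that the morphism $(-)\cap c_{\mathrm{top}}(\mathcal{L})$ of Lemma \ref{lemma Lconditions} commutes with $\bar\Omega_*$ in the obvious square.
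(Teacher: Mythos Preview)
The paper does not give a proof of this proposition; the text preceding it calls it an ``obvious result'' and defers entirely to the author's earlier work \cite[Prop.\ 2.16, Prop.\ 2.23, Prop.\ 3.5]{bojko2} and to \cite[Thm.\ 4.7]{BML22}. Your proposal is a faithful outline of how those references proceed: verify the axioms of Definition~\ref{DeftopVA} for the topological data on $\mathcal{P}_Y$, compute the Chern character of $\theta^{E_Y,\pa}$ (resp.\ $\theta^{L,\pa}$) against the K\"unneth basis to match the lattice formulae \eqref{fields} on generators, invoke the reconstruction lemma, and then check that $\Omega^{E_Y}$ intertwines the H-space, $\mathbb{CP}^\infty$, and $\theta$-data.

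One correction to your sign discussion for $Y=C,S$: there is no need to argue via a coboundary. The map $\Omega^{E_Y}$ sends the class $(n,d)\in\ZZ\times\ZZ$ to $(np,d)\in K^0(Y)\times\ZZ$, and for zero-dimensional classes one has $\chi(n_1p,n_2p)=0$ and $\chi(E_Y,n_2p)=en_2$, so the pulled-back sign $\tilde\epsilon_{E_Y,(n_1p,d_1),(n_2p,d_2)}=(-1)^{ed_1n_2}$ agrees with $\epsilon^{E_Y,\pa}_{(n_1,d_1),(n_2,d_2)}$ on the nose. The phrase ``$T$-exact coboundary'' does not belong here; coboundary arguments are relevant only when comparing two cocycles on the \emph{same} monoid that differ by a boundary, not when checking that a map of monoids intertwines given cocycles.
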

\section{Computing invariants}
We use the notation and results of §\ref{SecVACQ} to do explicit computations. The first two subsections are focused on computing the general invariants corresponding to the topological data of $[\QuotY]_{\vir}$. Afterwards, we integrate multiplicative genera of classes of the form $\alpha^{[n]}$ and $T^\vir$ to obtain closed formulae in this general setting.

\subsection{Computing the classes counting zero-dimensional sheaves}

We will be using the notation
$$
\mathscr{Q}_{E_Y,n}=\bar{\Omega}^{E_Y}_*\Big(\big[\QuotY\big]_{\textnormal{vir}}\Big)\,,\quad\textnormal{and}\quad  \mathscr{M}_{n}= \bar{\Omega}^{E_Y}_*\big([\mathcal{M}_{n}]_{\textnormal{inv}}\big)
$$
for our main classes of interest.  For $Y=S$, after we apply wall-crossing in the Lie algebra $(\mathring{H}_*(\mathcal{P}_Y),[-,-]^L)$ to recover $\mathscr{M}_{n}$ from $I_Y(L;q)$, we will compute a closed formula for their generating series. We then compute the full generating series $$
\mathscr{Q}_{E_Y}(q) = \sum_{n\geq 0}\mathscr{Q}_{E_Y,n}q^n
$$
by wall-crossing back inside of 
$
(\check{H}_*(\mathcal{P}_Y),[-,-])\,.
$
Most importantly, we will be doing all of our computations under the assumption that $b_1(S) = 0$, but we explain how to remove this restriction in Remark \ref{remark b1}.

For any $Y=C,S$, we will always choose the basis $B^\vee_i\in H^i(Y)$, such that $B_{i}=B_{\textnormal{2dim}(Y) -i}^{\vee}$ and $B^\vee$ is dual to $B$ with respect to the pairing  
$$
\int_Yv\cdot w \,,\qquad v\in H^i(Y),w\in H^{2\dim(Y)-i}(Y)\,. 
$$
Thus we have elements $v^\vee \in B^\vee_i$ such that $\int_Y w\cdot v^\vee =\delta_{v,w},$ whenever  $w\in B_i.$
For any class $h\in H^*(Y)$ we  will also write $h_v=h\cdot v^\vee$.

The next definition is meant to address the freedom of the choice of trivialization $o_{\alpha}: \ZZ_2\to O_{\alpha}$ for any zero-dimensional sheaf $F$ as explained in Definition \ref{Def: orientations}.
They are similar to the point-canonical ones  we used in \cite[Def. 3.2]{bojko2} with the exception that the structure sheaf is now replaced by $E_X$.

\begin{definition}
	\label{DefEpoint}
	As in Joyce--Tanaka--Upmeier \cite[Thm. 2.27]{JTU} or \cite[Thm. 5.4]{bojko}, choose the order of generators of $\{N\llbracket E_X\rrbracket + np\}_{n,N\in \ZZ}\subset K^0(X)$ such that
	$
	\llbracket E_X\rrbracket< p.
	$
	We now fix the orientations $o_{\llbracket E_X\rrbracket}, o_p$ such that  $o_p$ induces the Oh-Thomas/ Borisov--Joyce virtual fundamental class $[M_p]^{\textnormal{vir}} =\textnormal{Pd}(c_3(X))$ and $o_{\llbracket E_Y\rrbracket}$ induces the virtual fundamental class $[\{E_X\}]^{\textnormal{vir}}=1\in H_0(\textnormal{pt})$ by Definition \ref{Def: orientations}. We will denote these choices of orientations $o^{\textnormal{can}}_p$ and $o^{\textnormal{can}}_{\llbracket E_X\rrbracket}$ respectively. Using the construction in \cite[Thm. 2.27]{JTU} or \cite[Thm. 5.4]{bojko}, these determine canonical orientations for all $\alpha=N\llbracket E_X\rrbracket + np$. We call such orientation $E_X$-\textit{point-canonical}. Explicitly, these orientations can be described by setting 
	$$
	\epsilon_{N\llbracket E_X\rrbracket+kp,M\llbracket E_X\rrbracket +lp} = (-1)^{Mek}
	$$
	and applying  \eqref{Eq: signcomparison} to define the orientations $o_{N\llbracket E\rrbracket+ lp}$ for all $N,l\in\ZZ$.
\end{definition}

We now restrict the number of parameters needed to describe the classes $\msM_{n}$. Define the subgroups of K-theory $N_{\leq k}(Y)´\subset K^0(Y)$ to be maximal such that
$$
\textnormal{ch}\big(N_{\leq k}(Y)\big) \subset \bigoplus_{i\geq 2m -2k}H^i(Y)\,,
$$
where we continues using $m=\dim_{\CC}(Y)$.
The following result generalizes \cite[Lem. 3.6]{bojko2}.
\begin{lemma}
	\label{lemDeg}
	Let $\alpha\in N_{\leq k}$ and $\mathcal{M}_{\alpha}$ a moduli stack of sheaves on $Y$ with class $\alpha$. Consider the map $i_{\alpha}:(\mathcal{M}_{\alpha})^{\textnormal{top}} \to \mathcal{C}_{\alpha}$ induced by the universal sheaf $\mathcal{F}_{\alpha}\to Y\times \mathcal{M}_{\alpha}$, then $i_{\alpha\,*}\big(H_2(\mathcal{M}_{\alpha})\big)\subset H_2(\mathcal{C}_{\alpha})$ is contained in
	$$
	\bigoplus_{i\geq d}H^{2i}(Y)\oplus \Lambda^2\big(\bigoplus_{i\geq d}H^{2i+1}\big)\,.
	$$
	where $d = m - (k+1)$
	and we used the isomorphism 
	\begin{equation}
	¨\label{eqIso}
	H_2(\mathcal{C}_{\alpha}) = H^{\textnormal{even}}(Y)\oplus \Lambda^2H^{\textnormal{odd}}(Y)\,.  
	\end{equation}
\end{lemma}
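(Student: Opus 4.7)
The plan is to dualize. Since $H_2(\mathcal{C}_\alpha)$ is paired over $\mathbb{Q}$ with $H^2(\mathcal{C}_\alpha)$, it is enough to produce a spanning family of degree-two cohomology classes that detect the complement of the stated subspace and to check that they all pull back to zero under $i_\alpha$. By Proposition \ref{propmorphismLquot} the cohomology $H^2(\mathcal{C}_\alpha)$ is spanned by the generators $\mu_{v,1}$ with $v\in B^+$ (each of degree $2$) together with the products $\mu_{v,1}\mu_{w,1}$ with $v,w\in B^-$ (each factor of degree $1$); under \eqref{eqIso} the first family is dual to $H^{\textnormal{even}}(Y)$ and the second to $\Lambda^2 H^{\textnormal{odd}}(Y)$. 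Because every forbidden wedge in $\Lambda^2$ must contain at least one factor $\mu_{v,1}$ with $v\in B_j$ and $j<2d$, the lemma reduces to the single vanishing
$$
i_\alpha^*(\mu_{v,1})=0\qquad\text{whenever}\quad v\in B_j,\ j<2d.
$$

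The next step is to translate this into a statement about Chern characters. Naturality of the slant product and of $\textnormal{ch}$ gives $i_\alpha^*(\mu_{v,1})=\textnormal{ch}_1(\mathcal{F}_\alpha/v^\vee)$, and K\"unneth-decomposing $\textnormal{ch}(\mathcal{F}_\alpha)\in H^*(Y\times\mathcal{M}_\alpha)$ shows that this pullback is precisely the $H^j(Y)\otimes H^{2-\delta}(\mathcal{M}_\alpha)$ component of $\textnormal{ch}_{i_0}(\mathcal{F}_\alpha)$, where $\delta\in\{0,1\}$ is the parity of $j$ and $i_0=1+\lfloor j/2\rfloor$. So a sufficient condition for the pullback to vanish is that $\textnormal{ch}_{i_0}(\mathcal{F}_\alpha)$ itself vanishes on $Y\times \mathcal{M}_\alpha$.

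The key input is a codimension-of-support estimate for the universal sheaf. Since $\alpha\in N_{\leq k}$, every sheaf parametrized by $\mathcal{M}_\alpha$ has support of dimension at most $k$, so the support of $\mathcal{F}_\alpha$ has codimension at least $m-k$ inside $Y\times\mathcal{M}_\alpha$. Invoking the standard principle that the topological K-theory class of a coherent sheaf factors through K-theory with support, and that the Chern character of a class with support of codimension $\geq c$ lands in $H^{\geq 2c}$, one obtains $\textnormal{ch}_i(\mathcal{F}_\alpha)=0$ for every $i<m-k$. For $j<2d$ one has $i_0=1+\lfloor j/2\rfloor\leq 1+(d-1)=d<m-k$, so $\textnormal{ch}_{i_0}(\mathcal{F}_\alpha)=0$ and the required vanishing follows.

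The main obstacle is justifying the codimension bound on Chern characters when $\mathcal{M}_\alpha$ is only a higher stack and $Y\times\mathcal{M}_\alpha$ is not a smooth scheme. I would reduce to the smooth-scheme case through a smooth atlas of $\mathcal{M}_\alpha$ or via Blanc's topological realization; the support stratification pulls back compatibly along smooth covers, and on a smooth ambient space the vanishing of $\textnormal{ch}_i$ below the codimension of the support is classical via locally free resolutions supported on the support cycle. Descent of the rational-cohomology vanishing is then routine, and once this is in place the remainder of the argument is bookkeeping with the K\"unneth decomposition.
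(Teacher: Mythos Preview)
Your argument is correct and is essentially the paper's own proof: both reduce to showing $i_\alpha^*(\mu_{v,1})=0$ for $v\in B_j$ with $j<2d$, identify this class as a K\"unneth component of $\textnormal{ch}_{i_0}(\mathcal{F}_\alpha)$ with $i_0=1+\lfloor j/2\rfloor\leq d<m-k$, and conclude from the codimension of the support of the universal sheaf --- exactly what the paper abbreviates as ``by dimension arguments.'' You are simply more explicit about the codimension-of-support input and the descent through a smooth atlas, which the paper leaves implicit.
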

\begin{proof}
	Recall from \cite[Lem. 3.6 ]{bojko2}, that we have the
	\begin{equation}
	\label{pullchernofE}
	\textnormal{ch}(\mathfrak{E}_{\alpha})=\sum_{\begin{subarray}a v\in B\\
		i\geq 0\end{subarray}}v\boxtimes e^{\alpha}\otimes\mu_{v,i}\,,
	\end{equation}
	where $\mathfrak{E}$ is the universal K-theory class on $Y\times \mathcal{C}_Y$. 
	Let $\textnormal{deg}(v) = j$, then
	$$
	\textnormal{deg}\Big(v\boxtimes i_{\alpha}^*\big(e^{\alpha}\otimes \mu_{v,1}\big)\Big) = \begin{cases}
	j+1&\textnormal{if}\quad j\,\textnormal{ is odd}\\
	j+2&\textnormal{if}\quad j\,\textnormal{ is even}
	\end{cases}\,.
	$$
	Therefore $\big(\textnormal{id}_Y\times i_{\alpha}\big)^*\Big(v\boxtimes (e^{\alpha}\otimes \mu_{v,1}) \Big)= 0$ whenever 
	$$
	2m - 2(k+1)\geq \begin{cases}j+1&\textnormal{if}\quad j\,\textnormal{ is odd}\\
	j+2&\textnormal{if}\quad j\,\textnormal{ is even}
	\end{cases}
	$$
	by dimension arguments.
	In particular, we conclude that $i_{\alpha}
	^*(e^\alpha\otimes \mu_{v,1}) = 0$, whenever $v\in B_{<2d}$. Thus the result follows by using the isomorphism \eqref{eqIso}. 
\end{proof}

From now on we fix the $E_X$-point-canonical orientations for $Y=X$.  When $\Gamma\in H^*(Y)$ is a cohomology class, we will denote its coefficients with respect to a fixed basis $B\subset H^*(Y)$ by 
$$
\Gamma = \sum_{v\in B}\Gamma_v v\,.
$$
\begin{lemma}
	\label{lemNnp}
	If Conjecture \ref{conjecture quot WC} holds, then 
	$$
	\mathscr{M}_{n} = e^{(n,0)}\otimes 1\cdot \mathscr{N}_{n} + \QQ T(e^{(n,0)}\otimes 1)\in H_2(\mC_Y)/\big(TH_0(\mC_Y)\big)\,,
	$$
	where for the series $\mathscr{N}(q) = \sum_{n>0}\mathscr{N}_{n}q^n$ we have
	$$
	\textnormal{exp}\big(\mathscr{N}(q)\big) = \begin{cases}
	\Big[1-e^pq\Big]^{\Big(-\sum_{v\in B_2}c_{1,v}u_{v,1}\Big)}&\textnormal{if}\quad  Y=S\,,\\
	\\
	M(e^pq)^{\Big(\sum_{v\in B_6}c_3(X)_vu_{v,1}\Big)}&\textnormal{if}\quad Y=X\,.
	\end{cases}
	$$
	
\end{lemma}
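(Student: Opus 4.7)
The plan is to invert the twisted wall-crossing \eqref{wcfhilbtwistL}, after pushing it forward via $\bar\Omega_*$ into the Lie algebra $(\mathring{H}_*(\mP_Y),[-,-]^L)$, and match the outcome against the closed formulae for $I_{Y,L}(q)$ recalled in \S 2.2. Since $\bar\Omega_*$ is a morphism of Lie algebras by Proposition \ref{propmorphismLquot}, the identity \eqref{wcfhilbtwistL} becomes, after summing in $q$,
\[
\sum_{n\geq 0}I_n(L)\,\bar\Omega_*\bigl(e^{(n,1)}\bigr)\,q^n \;=\; \sum_{k\ge 0}\frac{(-1)^k}{k!}\bigl[\!\bigl[\cdots\bigl[\bar\Omega_*\bigl([\mM_{(0,1)}]_{\inv}\bigr),\,\msM(q)\bigr]^L,\cdots\bigr]^L,\msM(q)\bigr]^L ,
\]
where $\msM(q)=\sum_{n>0}\msM_n\,q^n$. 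The left-hand side is known in closed form from \eqref{eqrk1}-\eqref{eqABs} for $Y=S$ and from \cite[Thm.~3.1]{bojko2} for $Y=X$, so the goal is to solve for $\msM(q)$.

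To cut down the unknowns, apply Lemma \ref{lemDeg} to the moduli stack $\mM_{np}$ of zero-dimensional sheaves: modulo $T H_0(\mP_Y)$, the class $\msM_n$ lies in the subspace spanned by $e^{(n,0)}\otimes 1$ together with $e^{(n,0)}\otimes u_{v,1}$ for $v\in B_{2m-2}$, and the stated formula amounts to determining the coefficients of these $u_{v,1}$. Using the explicit lattice fields \eqref{fields} with the twisted data $(\tilde\chi_L,\tilde\epsilon^L)$ of Definition \ref{def twistedbylquot}, a direct residue computation yields
\[
\bigl[\bar\Omega_*\bigl([\mM_{(0,1)}]_{\inv}\bigr),\,e^{(n,0)}\otimes u_{v,1}\bigr]^L \;=\; \bigl(\chi(v\cdot L)-\chi(v)\bigr)\cdot e^{(n,1)}\otimes 1 + T(\cdots),
\]
with the $L$-dependence controlled by the pairing $\tilde\chi_L$ in \eqref{eqChi}. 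Because $\tilde\chi_L$ vanishes on two arguments $(m,0),(n,0)$ for $m,n>0$, iterated brackets containing more than one copy of $\msM(q)$ contribute nothing to the $e^{(-,1)}\otimes 1$ sector, so after projection to that sector only the $k=0,1$ terms on the right-hand side survive.

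The surviving identity is a linear equation equating $\log I_{Y,L}(q)$ with a linear functional of $\msN(q)$ depending on $L$. Reading off the coefficient of each $u_{v,1}$ by varying $c_1(L)$ (respectively $c_1(\alpha)$ in the fourfold case), and inserting the explicit formulae for $I_{Y,L}(q)$ from \S 2.2, reproduces the claimed closed expressions in both the surface and fourfold cases. The main technical obstacle is the vanishing of higher iterated brackets in the second paragraph: this requires carefully combining the grading and the signs $\tilde\epsilon^L$ of Definition \ref{def twistedbylquot} with the restriction coming from Lemma \ref{lemDeg}, to confirm that only the linear order in $\msM(q)$ contributes after projecting to the $e^{(-,1)}\otimes 1$ sector. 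In the fourfold case one additionally invokes the $E_X$-point canonical orientations of Definition \ref{DefEpoint} together with Theorem \ref{conjecture quot WC} to guarantee that the wall-crossing is available in the required form.
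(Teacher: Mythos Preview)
Your overall strategy — push the twisted wall-crossing forward along $\bar\Omega_*$, reduce the unknown $\msM_n$ to coefficients $a_v(n)$ of $u_{v,1}$ via Lemma \ref{lemDeg}, and solve by matching against $I_{Y,L}(q)$ — is exactly the paper's approach. The single-bracket computation you sketch is also correct: indeed
\[
\bigl[e^{(mp,1)}\otimes 1,\; e^{(np,0)}\otimes u_{v,1}\bigr]^L \;=\; -\,\tilde\chi_L\bigl((mp,1),(v,0)\bigr)\cdot e^{((m+n)p,1)}\otimes 1
\]
is a scalar multiple of $e^{((m+n)p,1)}\otimes 1$, with the scalar independent of $m$.

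The gap is your claim that only the $k=0,1$ terms survive after projecting to the $e^{(-,1)}\otimes 1$ sector. This is false, and the reason you give is beside the point: the iterated brackets in \eqref{wcfhilbtwistL} are never between two classes of type $(m,0)$ and $(n,0)$. Each new bracket pairs the running result (which sits in the $(-,1)$ sector) with one more $\msM_{n_i}$. But your own single-bracket formula shows that bracketing $e^{(mp,1)}\otimes 1$ with $\msM_{n_i}$ lands \emph{back} in the scalar subspace $\QQ\cdot e^{((m+n_i)p,1)}\otimes 1$, picking up a factor $s_{n_i}:=-\sum_{v}a_v(n_i)\,\tilde\chi_L\bigl((0,1),(v,0)\bigr)$. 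Iterating therefore gives
\[
\frac{(-1)^k}{k!}\bigl[\bigl[\cdots\bigl[e^{(0,1)},\msM_{n_1}\bigr]^L,\cdots\bigr]^L,\msM_{n_k}\bigr]^L
\;=\;\frac{(-1)^k}{k!}\,s_{n_1}\cdots s_{n_k}\cdot e^{(np,1)}\otimes 1,
\]
and \emph{all} $k$ contribute. Summing in $q$ yields $I_{Y,L}(q)=\exp\bigl(-\sum_n s_n q^n\bigr)$, which is precisely why $\log I_{Y,L}(q)$ appears — not because higher brackets vanish, but because they exponentiate the linear functional. The paper phrases this as an induction on $n$ (deferring to \cite[Thm.~4.9]{bojko2}): the $k\ge 2$ terms involve only $a_v(n')$ with $n'<n$, so the $k=1$ term determines $a_v(n)$. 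Either route closes the argument; your vanishing claim does not.
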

\begin{proof}
	We begin by noting that from \cite[Lem. 2.2]{bojko2}, we recover the action of the translation operator
	$$
	T(e^{np,0}) = e^{(n,0)}\otimes nu_{p,1}
	$$
	which implies together with Lemma \ref{lemDeg} that (assuming $b_1(Y) = 0$)
	$$
	\mathscr{N}_{n} = \begin{cases}
	\sum_{v\in B_2}a_v(n)u_{v,1} + \QQ u_{p,1}&\text{if}\qquad Y=S\,,\\
	\sum_{v\in B_6}a_v(n)u_{v,1} + \QQ u_{p,1}&\text{if}\qquad Y=X\,.
	\end{cases}
	$$

	To compute the coefficients, we start from \eqref{wcfhilbtwistL}
	which we may push forward to $\big(\mathring{H}(\mP_Y), [-,-]^L\big)$ where it becomes 
	\begin{equation}
	\label{wcfhilb2twistL}
	I_{Y,n}(L)e^{(np,1)}\otimes 1=  \sum_{\begin{subarray}a
		k\geq 1,n_1,\ldots,n_k>0\\
		n_1+\cdots+n_k = n
		\end{subarray}}\frac{(-1)^k}{k!}\big[\big[\ldots \big[e^{(0,1)}\otimes 1,\mathscr{M}_{n_1}\big]^L,\ldots \big]^L,\mathscr{M}_{n_k}\big]^L\,.
	\end{equation}
	We now only focus on the case $Y=S$, for which each iterated bracket reduces to computing 
	$$
	[e^{(mp,1)}\otimes 1, e^{(n,0)}\otimes \mathscr{N}_{n}]^L =-e^{(m+n)p,1}\otimes \sum_{v\in B_2}\int_Sc_1(L)\textnormal{ch}(v)a_{v}(n)\,.
	$$
	This is obtained from \eqref{fields} and taking residues. It is useful to rewrite the term on the left-hand side as
	$$
	[q^n]\Big\{\exp\Big(\sum_{k>0}\frac{1}{k}q^k\Big)\Big\}\,.
	$$
	using \eqref{eqrk1} combined with \eqref{eqABs}. By the same induction argument as in \cite[Thm. 4.9]{bojko2} but without having to consider the sign compatibility, we obtain $a_v(n) = -\frac{1}{n}c_{1,v}$ thus
	$$\mathscr{N}(n) =-
	\frac{1}{n}\sum_{v\in B_2}c_{1,v}u_{v,1}\,.\footnote{Note that the result of \eqref{eqrk1} is originally stated for algebraic line bundles. For our purposes, we need the result for a basis of $H^2(S)$. This can be obtained, as the arguments of Oprea--Pandharipande \cite{OP1} are purely topological.}$$
	The case $Y=X$ has been proved in \cite[Thm. 3.10]{bojko2}. 
\end{proof}

\subsection{Computing the VFCs of quot-schemes}
We can now compute the generating series $\mathscr{Q}_{E_Y}(q) = 1+\sum_{n>0}q^n\frac{\mathscr{Q}_{E_Y,n}}{e^{(np,1)}}$. In the following, we use the notation 
$$[z^n]\big\{f(z)\big\}=f_n\quad \textnormal{where}\quad  f(z) = \sum_{n\geq 0}f_nz^n\,.$$
For the class of a point $p$, we will also always use $y_k$ to denote  $u_{p,k}$.
\begin{theorem}
	\label{thmMain}
	Let $Y=S$ and
	\begin{align*}
	\mathscr{H}_{e,n}&=[z^{ne}]\bigg\{\sum_{v\in B_2}\frac{c_{1,v}}{n}\sum_{k>0}u_{v,k}z^{k}\textnormal{exp}\Big[\sum_{k>0}\frac{ny_k}{k}z^k\Big]\bigg\}\,,\\
	L_{e,n}&=[z^{ne}]\bigg\{\textnormal{exp}\Big[\sum_{k>0}\frac{ny_k}{k}z^k\Big]\bigg\}\,,\\
	Q_{e, n,j}&=\frac{1}{n}[z^{ne+j}]\bigg\{\textnormal{exp}\Big[\sum_{k>0}\frac{ny_k}{k}z^k\Big]\bigg\}\,.
	\end{align*}
	Then
	\begin{equation}
	\label{eqQug}
	\mathscr{Q}_{E_S}(q) = \textnormal{exp}\bigg[\sum_{n>0}\Big(\mathscr{H}_{e,n}+\Big(\frac{ec_1^2}{2n}-\frac{c_1(E_S)\cdot c_1}{n}\Big)L_{e,n}\Big)q^n-c_1^2\sum_{\begin{subarray}a n,m>0\\
		j>0\end{subarray}}jQ_{e,n,-j}Q_{e,m,j}q^{n+m}\bigg] \,.
	\end{equation}
	Let $Y=X$ and 
	$$
	\mathscr{H}_{e,n}=[z^{ne}]\bigg\{\sum_{v\in B_6}\sum_{l|n}\frac{n}{l^2}c_{3,v}\sum_{k>0}u_{\llbracket\mathcal{O}_C\rrbracket,k}z^{k}\textnormal{exp}\Big[\sum_{k>0}\frac{ny_k}{k}z^k\Big]\bigg\}\,,
	$$
	then
	\begin{equation}
	\label{eqQugX}
	\mathscr{Q}_{E_X}(q) = \textnormal{exp}\Big[\sum_{n>0}(-1)^{ne}\Big(\mathscr{H}_{e,n}+c_1(E_X)\cdot c_3\sum_{l|n}\frac{n}{l^2}L_{e,n}\Big)q^n\Big] \,.
	\end{equation}

\end{theorem}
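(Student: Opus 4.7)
The proof proceeds by applying the wall-crossing formulas of Theorems~\ref{thmCS} and~\ref{conjecture quot WC}, pushed forward via $\bar{\Omega}^{E_Y}_*$ into the Lie algebra $(\check H_*(\mP_Y),[-,-])$, and then evaluating the resulting iterated brackets using the lattice-vertex-algebra structure of Proposition~\ref{propmorphismLquot}. The Quot-scheme $\textnormal{Quot}_Y(E_Y,0)$ is a single reduced point whose image under $\bar\Omega^{E_Y}$ is $e^{(0,1)}\otimes 1$ (for $Y=X$ the $E_X$-point-canonical orientations of Definition~\ref{DefEpoint} fix this class and produce the sign $(-1)^{ne}$ visible in~\eqref{eqQugX}). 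With $\mathscr{M}_n\equiv e^{(np,0)}\otimes \mathscr{N}_n$ modulo $T$-exact terms from Lemma~\ref{lemNnp}, the formal-series identity $\exp\bigl(-\sum_nq^nL_n\bigr)=\sum_{k\ge 0}\frac{(-1)^k}{k!}\bigl(\sum_nq^nL_n\bigr)^k$ applied to $L_n=\mathrm{ad}_{\mathscr{M}_n}$ repackages the wall-crossing sum as
\[
\mathscr{Q}_{E_Y}(q)=\exp\bigl(-\mathrm{ad}_{\mathcal{M}(q)}\bigr)\bigl(e^{(0,1)}\otimes 1\bigr),\qquad\mathcal{M}(q)=\sum_{n>0}q^n\mathscr{M}_n.
\]

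To evaluate this adjoint exponential I would substitute into the field formula~\eqref{fields}. The numerical data needed, read off from~\eqref{eqChi} and~\eqref{ptop}, are $\chi^+((0,1),(np,0))=-en$, $\chi^+((0,1),(v,0))=-\chi(E_Y,v)$ for $v\in K^0(Y)$, and $\chi^+((np,0),(mp,0))=0$. The last vanishing means the OPE of $Y(\mathscr{M}_n,z_1)$ with $Y(\mathscr{M}_m,z_2)$ is regular at $z_1=z_2$, so a Wick-type expansion of $\exp(-\mathrm{ad}_{\mathcal{M}(q)})$ contains only pairwise contractions between distinct $\mathscr{M}_{n_i}$; furthermore $\mathscr{N}_n$ is linear in $u_{v,1}$ for $v\in B_2$ (resp.~$B_6$), so contractions within any triple are bounded and the Wick expansion truncates at quadratic order.

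One-point, normal-ordered contributions come from extracting the $[z^{-1}]$-residue of $z^{-en}\cdot\exp\bigl[\sum_k\tfrac{ny_k}{k}z^k\bigr]$, where $y_k=u_{p,k}$, acting on $\mathscr{N}_n$; equivalently, these are the $[z^{en}]$-extractions that define $\mathscr{H}_{e,n}$, $L_{e,n}$ and $Q_{e,n,j}$. Substituting $\mathscr{N}_n$ from Lemma~\ref{lemNnp} produces the coefficient $c_{1,v}$ (resp.~$c_{3,v}$); contracting with the annihilation weight $\chi(E_Y,v)$ for a degree-two (resp.~degree-six) basis element gives the $c_1(E_S)\cdot c_1(S)$ (resp.~$c_1(E_X)\cdot c_3(X)$) coefficient. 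The diagonal self-pairing of $c_{1,v}$ through the Poincar\'e pairing on $B_2$ yields the $ec_1^2/(2n)$ term in~\eqref{eqQug}, while the $\sum_{l\mid n}n/l^2$ factor in~\eqref{eqQugX} is $[q^n]\log M(q)$ coming from the MacMahon form of $\mathscr{N}_n$ for $Y=X$.

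The main obstacle is the careful bookkeeping of the two-point Wick contractions, which in the surface case produce the $-c_1^2\sum_{n,m,j>0}jQ_{e,n,-j}Q_{e,m,j}q^{n+m}$ summand. Each such contraction pairs an annihilation $d/du_{v,k}$ in $Y(\mathscr{M}_n,z_1)$ against a creation $u_{v,k}$ in $Y(\mathscr{M}_m,z_2)$; the weight $j$ arises from the asymmetric residue structure $[z_1^{ne-j-1}][z_2^{me+j-1}]$ after summing over the two orderings of the pair, and the two factors of $c_{1,v}$ combine via the Poincar\'e pairing on $B_2$ to yield $c_1(S)^2$. For $Y=X$ the analogous contraction vanishes automatically because $c_3(X)^2\in H^{12}(X)=0$, which is precisely why~\eqref{eqQugX} has no analogue of this summand.
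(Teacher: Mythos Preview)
Your approach is essentially identical to the paper's: push the wall-crossing formula forward via $\bar{\Omega}^{E_Y}_*$, evaluate the iterated brackets using the lattice-vertex-algebra fields of Proposition~\ref{propmorphismLquot} with the Euler pairings you list, and organize the combinatorics of contractions---your Wick expansion is exactly the paper's explicit pair-counting $\frac{(2k)!}{2^k k!}$. One small imprecision: the $\frac{ec_1^2}{2n}$ coefficient does not arise from a separate self-pairing of $c_{1,v}$ but from the Todd-class piece $e\,\frac{c_1\cdot v}{2}$ inside $\tilde{\chi}^{E_S,\pa}\big((v,0),(m,1)\big)=-\chi(E_S,v)$, i.e.\ the same mechanism that produces your $c_1(E_S)\cdot c_1$ term.
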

\begin{proof}
	Using Proposition \ref{propmorphismLquot}, we may pushforward the identity \eqref{EqQuEY} to the Lie algebra $\check{H}(\mP_Y)$ where it becomes 
	\begin{equation}
	\mathscr{Q}_{E_Y,n}=\sum_{\begin{subarray}
		a k>0,n_1,\ldots,n_k\\
		n_1+\ldots +n_k=n
		\end{subarray}}\frac{(-1)^k}{k!}\big[\big[\ldots\big[e^{(0,1)},\mathscr{M}_{n_1}\big],\ldots \big],\mathscr{M}_{n_k}\big]\,.
	\end{equation}
	Reordering it, we may write 
	$$
	\msQ_{E_Y,n} =  \sum_{\begin{subarray}a
		k\geq 1,n_1,\ldots,n_k\\
		\sum n_i = n
		\end{subarray}}\frac{1}{k!}\big[e^{(n_1p,0)}\otimes \mN(n_1) ,\big[\ldots,\big[e^{(n_kp,0)}\otimes \mN(n_k),e^{(0,1)}\big]\ldots\big]\big]\,.
	$$
	For now, let us restrict to the case $Y=S$ and only sketch the computation here. We will use multiple times that 
	\begin{align*}
	\tilde{\chi}^{E_S,\pa}(v,w) &= \chi(v,w)+\chi(w,v) = -2v\cdot w\,,\quad v,w\in H^2(S)\,,\\
	\tilde{\chi}^{E_S,\pa}\big((n,0),(0,1) \big)&=-en\,,\\
	\tilde{\chi}^{E_s,\pa}\big((v,0),(m,1)\big)&=-\chi(E_S,v)=-e \frac{c_1\cdot v}{2}+c_1(E_S)\cdot v\,,\quad v\in H^2(S)\,.
	\end{align*}
	
	Starting from a polynomial $K=K(u_{v,j},y_k)$ for $v\in B_2$ and $j,k\geq  1$ we act on it with the field $\msY(\msM_{n},z)$. Using \eqref{fields}, this amounts to
	\begin{align*}
	&\mathsf{Y}\big(e^{(np,0)}\otimes \sum_{v\in B_2}\frac{c_{1,v}}{n}u_{v,1},z\big)e^{(mp,1)}\otimes K=-\sum_{v\in B_2}\frac{c_{1,v}}{n}:\mathsf{Y}(e^0\otimes u_{v,1},z)\mathsf{Y}(e^{(np,0)}\otimes 1,z):e^{(mp,0)}\otimes K \\
	&= -\sum_{v\in B_2}\frac{c_{1,v}}{n}z^{-ne}e^{\big(1,(n+m)p\big)}\otimes \bigg\{\sum_{k>0}u_{v,k}z^{k-1}\textnormal{exp}\Big[\sum_{k>0}\frac{ny_k}{k}z^k\Big]-\textnormal{exp}\Big[\sum_{k>0}\frac{ny_k}{k}z^k\Big]\\
	&\sum_{k>0}\sum_{w\in B_2}k2v\cdot w \frac{d}{du_{w,k}}z^{-k-1}\bigg\}K-\Big(\frac{ec_1^2}{2n}-\frac{c_1(E_S)\cdot c_1}{n}\Big)z^{-1}K
	\end{align*}
	Taking residues, we will use the next set of identities
	\begin{align*}
	\frac{d}{du_{w,k}}\mathscr{H}_{e,n_1}=c_{1,w}Q_{e,n_1,-k}  \,,\qquad \sum_{v\in B_2}c_{1,v}c_{1,v^\vee}=c_{1}^2\,,\qquad \sum_{v\in B_2}c_1(E_S)_{v^\vee}c_{1,v}=c_1(E_S)\cdot c_1(S)
	\end{align*}
	to express the iterated Lie bracket.
	By induction and using that the number of ways to form pairs out of $2k$ elements is $\frac{2k!}{2^k}$, a simple combinatorial argument shows that for any partition of the form
	$$n=n_1+n_2+\ldots+n_l+n_{l+1}+\ldots+n_{2k+l}$$ the coefficient of $$\prod_{i=1}^l\Big(\mathscr{H}_{n_i}-\big(\frac{ec_1^2}{2n_i}-c_1(E_S)\cdot c_1\big)L_{n_i}\Big)\sum_{j>0}j(-c_1^2)Q_{n_{l+1},j}Q_{n_{l+2},j}\ldots \sum_{j'>0}j'(-c_1^2)Q_{n_{l+2k-1},j'}Q_{n_{l+2k},j'}$$ in 
	$$
	\sum_{n_1,\ldots,n_k}\frac{1}{k!} \big[\mathscr{M}_{n_k}, \ldots\big[\mathscr{M}_{n_1},e^{(0,1)}\otimes 1\big]\ldots \big]\big]
	$$
	is
	\begin{align*}
	&\frac{2^k}{(2k+l)!}{2k+l\choose l}\frac{2k!}{k!2^k}C\Big(\begin{subarray}
	a n_1,\ldots, n_l\\
	n_{l+1}, \ldots n_{l+2k-1}\\
	n_{l+2}, \ldots n_{l+2k}
	\end{subarray}\Big)\\
	&=\frac{1}{l!k!}C\Big(\begin{subarray}
	a n_1,\ldots, n_l\\
	n_{l+1}, \ldots n_{l+2k-1}\\
	n_{l+2}, \ldots n_{l+2k}
	\end{subarray}\Big)\,,
	\end{align*}
	where
	$
	C\Big(\begin{subarray}
	a n_1,\ldots, n_l\\
	n_{l+1}, \ldots n_{l+2k-1}\\
	n_{l+2}, \ldots n_{l+2k}
	\end{subarray}\Big)\
	$
	are some additional combinatorial coefficients that we avoid specifying explicitly except that the right-hand side is precisely the corresponding coefficient of the same term in \eqref{eqQug}. \\

	Finally, the computation for $Y=X$ is the simplest and is closer to the one in \cite[Prop. 5.1]{bojko2}. 
	We again rephrase the necessary Euler pairings in cohomological terms
	\begin{align*}
	\label{EqCYpa}
	\tilde{\chi}\big((n,0),\sigma\big)={\begin{cases}ne&\textnormal{if }\sigma = (\llbracket \mathcal{O}_X\rrbracket,0)\\
		-ne&\textnormal{if }\sigma = (0,1)\\
		0&\textnormal{otherwise}
		\end{cases}}\quad
	\tilde{\chi}\big((v,0),(m,1)\big) = {\begin{cases}
		c_{1}(E_X)\cdot v&\textnormal{if }v\in B_6\\
		-em&\textnormal{if }v=p
		\end{cases}}
	\numberthis
	\end{align*}
	This time, we need to be more careful with signs by recalling from \ref{DefEpoint} that 
	$$
	\epsilon_{N\llbracket E_X\rrbracket+kp,M\llbracket E_X\rrbracket +lp} = (-1)^{Mek}\,.
	$$ 
	So acting with the field $\mathsf{Y}(\mM_{n},z)$ on  $K=K(u_{w,k})$ for $w\in B_{6,8}$. we recover
	\begin{align*}
	&Y\big(e^{(np,0)}\otimes \sum_{v\in B_2}\frac{n}{l^2}c_{3,v}u_{v,1},z\big)e^{(mp,1)}\otimes K\\
	=&\sum_{v\in B_2}\sum_{l|n}\frac{n}{l^2}c_{3,v}:\mathsf{Y}(e^0\otimes u_{v,1},z)\mathsf{Y}(e^{np}\otimes 1,z):e^{(mp,0)}\otimes K \\
	=& (-1)^{ne}\sum_{v\in B_2}\sum_{l|n}\frac{n}{l^2}c_{3,v}z^{-ne}e^{\big(1,(n+m)p\big)}\otimes \bigg\{\sum_{k>0}u_{v,k}z^{k-1}\textnormal{exp}\Big[\sum_{k>0}\frac{ny_k}{k}z^k\Big]\\
	&+c_1(E_X)\cdot c_3\,\,z^{-1}\textnormal{exp}\Big[\sum_{k>0}\frac{ny_k}{k}z^k\Big]\bigg\}K\,.
	\end{align*}
	
	Taking residues, the result follows by an easy computation.
\end{proof}

\begin{remark}
	\label{remark b1}
	Going through the above computation without the assumption $b_1(S)=0$ one can check that under the projection $\Pi_{\textnormal{even}}:\check{H}_*(\mathcal{P}_S)\to \check{H}_{\textnormal{even}}(\mathcal{P}_S)$ we still obtain the same results. This is sufficient for us because we never integrate odd cohomology classes, except when integrating polynomials in $\textnormal{ch}_k(T^{vir})$, but as the only terms $\mu_{v,k}$ contained in $\textnormal{ch}_k(T^{\vir})$ (see \eqref{eqThOb}) for $v\in B_{\textnormal{odd}}$ are given for $v\in B_3$, each such integral will contain a factor of $\chi^-(v,w)=0$ for $v,w\in B_3$.
	
	The condition $b_1(C)=0$ is not important to us, as we only use geometric arguments to compute integrals over $[\QuotC]$ when $C\subset S$ is a smooth canonical curve. 
	
	Finally, we can drop the condition $b_1(X)=0$, because this would only introduce terms in $u_{v,k}$, where $v\in B_7$ and then again $\chi(v,w)=0$ when $w\in B_7$ also thus the only possible non-zero term coming from \eqref{eqChk} vanishes.
\end{remark}
\begin{remark}
	It is not reasonable to apply the above to curves. Firstly, the computation itself becomes overly complicated and secondly one can use a different approach because we are only interested in Segre and twisted Verlinde numbers. These can be addressed by only working with
	$$
	\textnormal{Quot}_{\mathbb{P}^1}\big(\bigoplus_i\mO(d_i),n\big)\,,
	$$
	where one can apply the classical localization of Atiyah--Bott and apply dimensional reduction from a blow-up of a K3 surface in a point. We do this in §\ref{secTwF}
\end{remark}

\subsection{Tautological integrals}
While in the previous section, we studied the data of the virtual fundamental class corresponding to all descendant integrals, we now focus on more revealing expressions which satisfy compelling symmetries. For this, we first address integrals  of multiplicative genera of tautological classes. The computation for $Y=S$ is the biggest challenge, so we focus on it first.

The corresponding topological analogues of 
$$
\alpha^{[n]}\,, \llbracket T^\vir\rrbracket ^\vee \in K^0\big(\QuotS\big)\qquad \text{are}\qquad \mathfrak{T}(\alpha) =   \pi_{2\,*}\big(\pi_S^*(\alpha)\otimes  \mathfrak{E}\big)\boxtimes \mathfrak{U}^\vee\,, -\Delta^*\theta^{E_S,\textnormal{ob}}\in  K^0(\mathcal{P}_S)\,,
$$
where $\theta^{E_S,\ob}$ was described in Definition \ref{def twistedbylquot}. They satisfy
\begin{equation}
\label{Eq: alphaTvirpullback}
(i_{E_S,n})^*\Big(\Omega^{E_S}\Big)^*\mathfrak{T}(\alpha) = \alpha^{[n]}\,,\qquad -(i_{E_S,n})^*\Big(\Omega^{E_S}\Big)^*\Delta^*\theta^{E_S,\text{ob}} = \llbracket T^\vir\rrbracket ^\vee\,.
\end{equation}

In \cite[Lem. 5.7]{bojko2}, we expressed the Chern characters of $\alpha^{[n]}$ by using Atiyah--Hirzebruch--Riemann--Roch \cite{dold} of Dold:
$$
\textnormal{ch}_k\big(\mathfrak{T}(\alpha) \big) = \sum_{\begin{subarray}a v\in B_{\textnormal{even}}\\
	i+j=k
	\end{subarray}}(-1)^i\chi(\alpha^\vee,v)\beta_i\cdot \mu_{v,j}
$$
We use it to compute also Chern characters of $\theta^{E_S,\text{ob}}$, where we use $b_1(S) = 0$ to avoid odd degrees based on Remark \ref{remark b1}:
\begin{align*}
\label{eqThOb}
\textnormal{ch}_k(\theta^{E_S,\textnormal{ob}}) &= 
1\boxtimes\Big(\int_S \ch(\mathfrak{E})^\vee\cdot\ch(\mathfrak{E})\cdot \td(S)\Big)^\vee - \Big(\int_S\ch(E_S)^\vee \cdot\ch(\mathfrak{U})^\vee\boxtimes \ch(\mathfrak{E})\cdot \td(S)\Big)^\vee
\\&=\sum_{\begin{subarray}a v,w\in B\\
	i+j=k
	\end{subarray}}\int_S v^\vee\cdot w\cdot \td(S)(-1)^j\mu_{v,i}\mu_{w,j} - \sum_{\begin{subarray}a v\in B\\
	i+j=k
	\end{subarray}}\int_S \ch(E_S)^\vee \cdot v\cdot \td(S) (-1)^j\beta_i\mu_{v,j}
\\&=\sum_{\begin{subarray}a i+j=k
	\\
	v\in B \end{subarray}}(-1)^j\chi(v,w)\mu_{v,i}\boxtimes \mu_{w,j} -\sum_{\begin{subarray}a v\in B\\
	i+j=k
	\end{subarray}}d(-1)^j\beta_i\boxtimes\chi(E_S,v)\mu_{v,j}\,.
\numberthis
\end{align*}

When $Y=X$, we instead get 
$$
\llbracket T^{\textnormal{vir}}\rrbracket^\vee =-(\omega^{E_X}\circ i_{E_X,n})^*\big(\theta^{E_X,\text{ob}}\big)-\chi(\mO_X)\,,
$$
We also have (after neglecting $B_{\textnormal{odd}}$ contributions which we are allowed to do by Remark \ref{remark b1})
\begin{equation}
\label{eqChk}
\textnormal{ch}_k\big(\theta^{E_X,\ob}\big) = \sum_{\begin{subarray}a i+j=k\\
	\sigma,\tau\in \BB
	\end{subarray}}(-1)^j\tilde{\chi}^{E_X}(\sigma,\tau)\mu_{\sigma,i}\boxtimes \mu_{\tau,j}\,.
\end{equation}
Acting with \eqref{eqChk} on $\eqref{eqQugX}$ simplifies the expression:
\begin{align*}
\ch_k\big(\theta^{E_X,\ob}\big)\cap \msQ_{E_Y,X}  = \bigg(c_1(E_X)_{v^\vee}\Big(1+(-1)^k\Big)\mu_{v,k} -en \Big(1+(-1)^k\Big)\mu_{p,k}\bigg) \cap \mathscr{Q}_{E_X,n}
\end{align*}
by a similar computation as in \cite[Lem. 5.71]{bojko2}, which boils down to noting that only $\mu_{v,k}$ with $v\in B_{6,8}$ can contribute and then expressing the Euler pairings as in \eqref{EqCYpa}

Let $g,f$ be multiplicative genera as in \cite[§5.3]{bojko2}. By the definition of $\msQ_{E_Y,n}$, we have 
\begin{align*}
\label{eqFaGT}
\int_{[\Quot]^{\textnormal{vir}}} f(\alpha^{[n]})g(T^{\textnormal{vir}}) &= \int_{\mathscr{Q}_{E_S,n}} \textnormal{exp}\Big[\sum_{\begin{subarray}a k\geq 0\\
	v\in B_{6,8}\end{subarray}}f_k\big(c_1(\alpha)\cdot v+v\cdot \frac{c_1}{2}a\big)\mu_{v,k}z^k+\sum_{k>0}a\mu_{p,k}f_kz^k\\
&+\sum_{k>0}\Big(-\sum_{\begin{subarray}a i+j=k\\
	v,w\in B_2\end{subarray}}(-1)^iv\cdot w \mu_{v,i}\mu_{w,j}+\sum_{v\in B_2}\Big(e\frac{c_1\cdot v}{2}-c_1(E_X)\cdot v\Big)\mu_{v,k}g_kz^k\\
&+e\sum_{k>0}\mu_{p,k}g_kz^k
\Big]\\
\int_{[\QuotX]^{\vir}}f(\alpha^{[n]})g(T^{\vir})=&\int_{\mathscr{Q}_{E_X,n}}\exp\Big[\sum_{v\in B_6}\Big(c_1(\alpha)\cdot v f_k-c_1(E_X)\cdot v(g_k+(-1)^kg_k)\Big)\mu_{v,k}\\+&\sum_{k>0}\Big(af_k -e(g_k+(-1)^kg_k)\Big)\mu_{p,k} \Big]
\\ \textnormal{where}\quad 
&\sum_{k\geq 0}f_k\frac{z^k}{k!} = \textnormal{log}\big(f(z)\big)\,,\qquad \sum_{k\geq 0}g_k\frac{z^k}{k!} = \textnormal{log}\big(g(z)\big) \,.
\numberthis
\end{align*}
To obtain the above, we also used for $Y=S$ $$\chi(\alpha^\vee,v)=\begin{cases}
c_1(\alpha)\cdot v + v\cdot \frac{c_1}{2}a&v\in B_2\\
a=\textnormal{rk}(\alpha)&v=p
\end{cases}
$$
and
$$
\chi(E_S,v)=\begin{cases}-
c_1(E_S)\cdot v + v\cdot \frac{c_1}{2}e&v\in B_2\\
e&v=p
\end{cases}\,.
$$
together with $\mu_{v,0} = 0$ whenever $v\in B_2$ which follows from \cite[Lem. 3.9. (i)]{bojko2}.\\

The next lemma summarizes some identities regarding the action of exponentials of differential operators which will be needed in evaluating \eqref{eqFaGT} while using the formula for the cap product \eqref{Eq: pairing}.
\begin{lemma}
	\label{lemma SecDer}
	Let $c_i,c_{i,j},a_i\in R[q]$,  for $i,j\in I$, where $I$ is some indexing set, then the following identities hold whenever both sides are well-defined:
	\begin{align}
	&\exp\Big[\sum_{i\in I}c_i\frac{\partial^2}{(\partial x_i)^2}\Big]\textnormal{exp}\Big[\sum_{i\in I}a_ix_i\Big]\Big|_{x_i=0} =\exp\Big[\sum_{i\in I}c_ia_i^2\Big]\label{eqDoDe1}\\
	&\exp\Big[\sum_{i\neq j\in I}c_{i,j}\frac{\partial}{\partial x_i}\frac{\partial}{\partial x_j}\Big]\exp\Big[\sum_{i\in I}a_ix_i\Big]\Big|_{x_i=0}=\exp\Big[\sum_{i\neq j\in I}c_{i,j}a_ia_j\Big]\label{eqDoDe2}\\
	&\exp\Big[\sum_{i,j\in I}c_{i,j}\frac{\partial}{\partial x_i}\frac{\partial}{\partial x_j} + \sum_{i\in I}c_i\frac{\partial}{\partial x_i}\Big]\exp\Big[\sum_{i\in I}a_kx_k\Big]\Big|_{x_i=0} = \exp\Big[\sum_{i,j\in I}c_{i,j}a_ia_j+\sum_{i\in I}c_ia_i\Big]\,
	\label{eqCoDe}
	\end{align}
\end{lemma}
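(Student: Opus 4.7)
The plan is to reduce all three identities to the single observation that $\exp\!\big[\sum_i a_i x_i\big]$ is a simultaneous eigenfunction of every partial derivative $\partial/\partial x_i$ with eigenvalue $a_i$. Consequently, for any polynomial (or formal power series with appropriate convergence) $P$ in the $\partial/\partial x_i$, we have
\[
P\!\left(\frac{\partial}{\partial x_i}\right)\exp\!\Big[\sum_{k\in I} a_k x_k\Big] \;=\; P(a_i)\,\exp\!\Big[\sum_{k\in I} a_k x_k\Big].
\]
Since the operators appearing in (\ref{eqDoDe1}), (\ref{eqDoDe2}) and (\ref{eqCoDe}) are exponentials of polynomials in the $\partial/\partial x_i$, expanding the outer exponential term by term (justified in the formal power-series sense after evaluating at $x_i=0$) immediately converts each $\partial/\partial x_i$ into $a_i$, producing the right-hand sides. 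Setting $x_i=0$ kills the remaining factor $\exp\!\big[\sum a_k x_k\big]$, leaving $1$.

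More concretely, for (\ref{eqDoDe1}) I would first observe that the operators $c_i \partial^2/\partial x_i^2$ for different $i$ commute, so $\exp\big[\sum_i c_i \partial^2/\partial x_i^2\big]$ factors as $\prod_i \exp[c_i\partial^2/\partial x_i^2]$, and the variables separate. It therefore suffices to prove the one-variable identity $\exp[c\partial_x^2]\exp[ax]\big|_{x=0} = \exp[ca^2]$, which follows by expanding $\exp[c\partial_x^2] = \sum_n (c\partial_x^2)^n/n!$ and using $\partial_x^{2n}\exp[ax] = a^{2n}\exp[ax]$.

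For (\ref{eqDoDe2}) the same argument works, with the mild care that the operators $c_{i,j}\partial_{x_i}\partial_{x_j}$ (for distinct unordered pairs) all commute with one another and with the $\partial_{x_k}$'s, so the cumulative exponential acts on $\exp[\sum a_k x_k]$ by eigenvalue substitution. Identity (\ref{eqCoDe}) is then just the combination: the full operator $\sum_{i,j}c_{i,j}\partial_{x_i}\partial_{x_j} + \sum_i c_i \partial_{x_i}$ acts on $\exp[\sum a_k x_k]$ with eigenvalue $\sum_{i,j}c_{i,j}a_i a_j + \sum_i c_i a_i$, and taking $\exp$ of the operator gives $\exp$ of the eigenvalue, again multiplied by $\exp[\sum a_k x_k]$ which becomes $1$ at $x_i=0$. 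There is no real obstacle; the only point requiring verification is that all sums are finite or converge in $R\llbracket q\rrbracket$ in each fixed $q$-degree, which holds because in applications the coefficients $a_i, c_i, c_{i,j}$ carry strictly positive $q$-degree so any given coefficient of $q^n$ involves only finitely many terms of $I$.
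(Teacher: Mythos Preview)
Your proposal is correct and takes essentially the same approach as the paper: both rest on the observation that $\partial/\partial x_i$ acts on $\exp\big[\sum_k a_k x_k\big]$ as multiplication by $a_i$, so any (formal) series in the $\partial/\partial x_i$ acts by the corresponding substitution. The paper's proof is slightly less unified---it expands the first identity term by term in the single-variable case and then invokes the translation identity $\exp[a\,\partial_x]f(x)=f(x+a)$ for the mixed-derivative case---whereas your eigenfunction formulation handles all three at once; but the content is the same.
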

\begin{proof}
	For the first identity, we can use that 
	\begin{align*}
	\exp\Big[c\frac{\partial^2}{(\partial x)^2 }\Big]\exp\Big[ax\Big]\Big|_{x=0} &=\bigg[\sum_{n\geq 0}\frac{c^n}{n!}\frac{\partial^{2n}}{(\partial x)^{2n}}\bigg]\bigg[\sum_{k\geq 0}\frac{(ax)^k}{k!}\bigg]\Big|_{x=0} \\
	&=\sum_{n\geq 0}\frac{c^na^{2n}}{n!}=\exp\big[ca^2\big]\label{eqCoDe}\,.
	\end{align*}
	Using the standard result 
	\begin{equation}
	\label{eqTra}
	\textnormal{exp}[a\partial/\partial x]f(x) = f(x+a)\,,\end{equation} we obtain:
	$$
	\exp\Big[c\frac{\partial}{\partial x}\frac{\partial}{\partial y}\Big]\exp\Big[ax +by \Big]=\exp\big[ax+by\big]\exp\big[abc\big]\,,
	$$
	by successive application of \eqref{eqTra}. Finally, \eqref{eqCoDe} follows by the same arguments. 
\end{proof}

Recall from \eqref{Eq: pairing} that for any two power-series $F$ and $G$, we have
\begin{equation}
\label{eqCap}
G(\mu_{v,k})\cap F(u_{w,i}) = G\Big(\frac{1}{(k-1)!}\frac{\partial}{\partial u_{v,k}}\Big) F(u_{w,i})\,.
\end{equation}
 Moreover, by evaluating at $u_{w,i}=0$, we obtain the action of integrating cohomology classes $G(\mu_{v,k})$.  Applying this together with Lemma \ref{lemma SecDer}  and \eqref{eqFaGT} we have the following result for $Y=S$:
\begin{align*}
\label{eqQug}
&Z_{E_S}(f,g,\alpha;q)\\
&=
\exp\bigg[c_1^2\sum_{k>0}\frac{g_{k}}{k!}\Big(\sum_{\begin{subarray}a i+j=k\\i,j>0
	\end{subarray}}(-1)^i\frac{1}{(i-1)!(j-1)!}\Big(\sum_{n>0}\frac{1}{n}[z^{en-i}]\Big\{f(z)^{an}g(z)^{en}\Big\}\Big)\\
&\Big(\sum_{n>0}\frac{1}{n}[z^{en-j}]\Big\{f(z)^{an}g(z)^{en}\Big\}\Big)\bigg]\\
&\textnormal{exp}\bigg[\sum_{n>0}\frac{1}{n}\Big(c_1(\alpha)\cdot c_1+
\frac{c_1^2}{2}a\Big)[z^{ne}]\Big\{z\frac{d}{dz}\textnormal{log}\big(f(z)-f(0)\big)f(z)^{an}g(z)^{en}\Big\}q^n\\
&+\sum_{n>0}\frac{1}{n}\Big(
\frac{c_1^2}{2}a- c_1(E_S)\cdot c_1\Big)[z^{ne}]\Big\{z\frac{d}{dz}\textnormal{log}\big(g(z)-g(0)\big)f(z)^{an}g(z)^{en}\Big\}q^n
\\
&-\Big(c_1^2\frac{e}{2n}-c_1(E_S)\cdot c_1\Big)[z^{ne}]\Big\{f(z)^{an}g(z)^{en}\Big\}q^n\\
&-c_1^2\sum_{\begin{subarray}a n,m>0\\ j>0\end{subarray}}\frac{j}{n\cdot m}[z^{ne-j}]\Big\{f(z)^{an}g(z)^{en}\Big\}[z^{me+j}]\Big\{f(z)^{am}g(z)^{em}\Big\}q^{n+m}
\bigg]\,,
\numberthis
\end{align*}
and for $Y=X$:
\begin{align*}
Z_{E_X}(f,g,\alpha;q)
=&\exp\bigg[\sum_{\begin{subarray}a n>0\\ l|n\end{subarray}} (-1)^{ne}\frac{n}{l^2}[z^{ne}]\Big\{\bigg(c_1(\alpha)\cdot c_3\,z\frac{d}{dz}\log\big(f(z)-f(0)\big)\\
-&c_1(E_X)\cdot c_3\,z\frac{d}{dz}\Big(\log\big(g(z)-g(0)\big)+\log\big(g(-z)-g(0)\big)\Big)\bigg)\\
\cdot & f(z)^{an}g(z)^{en}g(-z)^{en} +f(z)^{an}g(z)^{en}g(-z)^{en}\Big\}\bigg]\,.
\end{align*}",
We now apply the following corollary of the Lagrange inversion:
\begin{corollary}[\cite{bojko3}]
	\label{CorMG}
 Let $Q(t)\in \mathbb{K}\llbracket t\rrbracket$  be a power-series independent of $q$ with a constant term being $1$ over a field $\mathbb{K}$ (e.g. the field of Laurent-series over $\QQ$).
		Let  $H_i(q)$ be the $e$ different Newton--Puiseux solutions to \begin{equation}
	\label{eqGSo}\big(H_i(q)\big)^e=qQ\big(H_i(q)\big)\,,\end{equation}
	then for any Laurent series $\phi(t)$, we have
	\begin{align}
	\label{eqHLG}
	\sum_{k=1}^e\phi\big(H_i(q)\big) &= e\phi_0+ [t^{-1}]\big\{\phi'(t)\textnormal{log}\big(Q(t)\big\}+ \sum_{n\neq 0}\frac{1}{n} [t^{ne-1}]\Big\{\phi'(t)Q(t)^n\Big\}x^n\,,\\
	\textnormal{log}\Big(\Big(\prod_{i=1}^e H_i(q)\Big)/q\Big) &= \sum_{m>0}\frac{1}{m}[t^{me}]\Big\{Q(t)^m\Big\}q^m\,.
	\end{align}
\end{corollary}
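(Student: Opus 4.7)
The plan is to reduce both identities to the classical Lagrange inversion formula applied to the defining equation $H^e = qQ(H)$. First, substitute $\zeta = q^{1/e}$ and $G(t) = Q(t)^{1/e}$ (well-defined as a power series since $Q(0) = 1$), so that the equation becomes $H = \zeta G(H)$ with a unique power series solution $H(\zeta)$ vanishing at the origin. The $e$ Puiseux branches are then $H_i(q) = H(\omega^i q^{1/e})$, where $\omega$ is a primitive $e$-th root of unity in a suitable extension of $\mathbb{K}$, reflecting the symmetry $\zeta \mapsto \omega\zeta$ of $H^e = \zeta^e Q(H)$.

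The main tool is Lagrange inversion in Laurent form: for any Laurent series $\phi$,
\begin{align*}
[\zeta^n]\{\phi(H(\zeta))\} &= \tfrac{1}{n}[t^{n-1}]\{\phi'(t) G(t)^n\}, \quad n \neq 0, \\
[\zeta^0]\{\phi(H(\zeta))\} &= \phi_0 + [t^{-1}]\{\phi'(t) \log G(t)\},
\end{align*}
where the second line is a residue identity derivable from the $w = zf(w)$ change of variables combined with formal integration by parts. Combining this with the roots-of-unity filter
$$
\sum_{i=1}^e \phi(H_i(q)) = e\sum_{m\in\ZZ}[\zeta^{me}]\{\phi(H(\zeta))\}\, q^m,
$$
and the identities $G^{me} = Q^m$ together with $e\log G = \log Q$, immediately yields the first claim \eqref{eqHLG}.

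For the second identity, I would specialize $\phi(t) = (1/e)\log Q(t)$, which is a power series with $\phi_0 = 0$. The $n = 0$ residue $[t^{-1}]\{\phi'(t)\log Q(t)\}$ vanishes because both factors are regular; for $n \neq 0$, the identity $(Q'/Q)Q^m = (1/m)(Q^m)'$ turns $(1/m)[t^{me-1}]\{\phi'(t)Q^m\}$ into $(1/m)[t^{me}]\{Q^m\}$, and the $m < 0$ terms vanish because $Q^m$ is a power series. Hence
$$
\tfrac{1}{e}\sum_i \log Q(H_i(q)) = \sum_{m > 0}\tfrac{1}{m}[t^{me}]\{Q^m\}\, q^m.
$$
Taking the formal $\log$ of $H_i^e = qQ(H_i)$ and summing over $i$ rewrites the left-hand side as $\log(\prod_i H_i/q)$, finishing the argument.

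The main obstacle to full rigour is formal in nature: $\log(\prod_i H_i/q)$ carries a constant term $\log\prod_i \omega^i = \log((-1)^{e+1})$ that the right-hand side of the identity does not see, so one must interpret the identity modulo constants, equivalently by applying $q\tfrac{d}{dq}$ to both sides and matching coefficients. The other delicate point is justifying the Laurent-series form of Lagrange inversion with the explicit log residue at $n = 0$, which is standard but worth stating carefully before invoking it.
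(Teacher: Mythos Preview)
Your proof is correct and is precisely the approach the paper points to: the corollary is cited from \cite{bojko3} without proof and explicitly labelled ``a corollary of the Lagrange inversion'', and you derive both identities by the standard Lagrange--B\"urmann formula for $H=\zeta G(H)$ together with the roots-of-unity filter to pass from $\zeta=q^{1/e}$ to $q$. The caveat you raise about the constant term $\log\prod_i\omega^i=\log\big((-1)^{e+1}\big)$ in the second identity is a genuine (if minor) point about the statement as written; in the paper's applications only the coefficients for $m>0$ are ever used, so interpreting the identity modulo constants, or after applying $q\,d/dq$, is exactly what is needed.
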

Then setting $Q(t) = f(t)^{a}g(t)^e$ we first simplify the first term under the exponential in \eqref{eqQug}
by using $[z^{ne-i}]\{\cdots \} = \frac{1}{i}[z^{ne}]\{\frac{d}{dz}z^i\cdots\}$ and $\phi=z^i$ to get
\begin{align*}
&\exp\bigg[c_1^2\sum_{k>0}\frac{g_{k}}{k!}\Big(\sum_{\begin{subarray}a i+j=k\\i,j>0
	\end{subarray}}(-1)^i\frac{1}{(i-1)!(j-1)!}\Big(\sum_{n>0}\frac{1}{n}[z^{en-i}]\Big\{f(z)^{an}g(z)^{en}\Big\}\Big)\\
&\Big(\sum_{n>0}\frac{1}{n}[z^{en-j}]\Big\{f(z)^{an}g(z)^{en}\Big\}\Big)\bigg] =  \exp\bigg[c_1^2\sum_{k>0}\frac{g_{k}}{k!}\sum_{s,t=1}^e\sum_{\begin{subarray}a i+j=k\\i,j>0
	\end{subarray}}{k\choose j}(-1)^iH^i_s(q)H^j_t(q)\bigg] \,.
\end{align*}
Using Corollary \ref{CorMG} for the rest of the terms, we see that 
\begin{align*}
\label{eqfagT2}
&Z_{E_S}(f,g,\alpha;q) =\exp\bigg[c_1^2\sum_{k>0}\frac{g_{k}}{k!}\sum_{s,t=1}^e\sum_{\begin{subarray}a i+j=k\\i,j>0
	\end{subarray}}{k\choose j}(-1)^iH^i_s(q)H^j_t(q)\bigg]\\
&\prod_{i=1}^e\Big(\frac{f\big(H_i(q)\big)}{f(0)}\Big)^{c_1(\alpha)\cdot c_1 +a\frac{c_1^2}{2}}\prod_{i=1}^e\Big(\frac{g\big(H_i(q)\big)}{g(0)}\Big)^{e\frac{c_1^2}{2}-c_1(E_S)\cdot c_1 }\Big(\frac{1}{q}\prod_{i=1}^e H_i(q)\Big)^{e\frac{c_1^2}{2}-c_1(E_S)\cdot c_1}\\
&\cdot \exp\Big[-\sum_{\begin{subarray}a n,m>0\\ j>0\end{subarray}}\frac{j}{nm}[z^{ne-j}]\Big\{f(z)^{an}g(z)^{en}\Big\}[z^{me+j}]\Big\{f(z)^{am}g(z)^{em}\Big\}q^{n+m}\Big]^{c_1^2}
\numberthis
\end{align*}
\label{Eq:R1req1}
where $H_i(q)$ are the solutions to
\begin{equation}
\label{eqVCh}
H_i^e = qf^a(H_i)g^e(H_i)\,,\qquad H_i=\omega^i_eq^{\frac{1}{e}}f^{\frac{a}{e}}(H_i)g(H_i).
\end{equation}
Using \eqref{eqVCh} and \eqref{eqFaGT}, we obtain
\begin{align*}
\label{Eq:ZESexp}
&Z_{E_S}(f,g,\alpha;q)=\\
&\prod_{i\neq j}g(H_i-H_j)^{c_1^2}\prod_{i=1}^e\big(g(H_i)g(-H_i)\big)^{-ec_1^2}\prod_{i=1}^e\bigg(\frac{f(H_i)}{f(0)}\bigg)^{c_1(\alpha)\cdot c_1}\prod_{i=1}^e\bigg(\frac{f(H_i)}{f(0)}\bigg)^{a\mu_S(E_S)}\\
&\cdot \exp\bigg[-\sum_{\begin{subarray}a n,m>0\\ j>0\end{subarray}}\frac{j}{nm}[z^{ne-j}]\Big\{f(z)^{an}g(z)^{en}\Big\}[z^{me+j}]\Big\{f(z)^{am}g(z)^{em}\Big\}q^{n+m}\bigg]^{c_1^2}\,,
\numberthis
\end{align*}
where $\mu_S(E_S) = \frac{c_1(E_S)\cdot c_1}{e}$. This notation is chosen so that for $S$ a del Pezzo surface this coincides with the slope of $E_S$ with respect to the ample divisor class $c_1$.
We only need to deal with the last term
\begin{equation}
\label{eqGeR}
G_e(Q)= \exp\bigg[-\sum_{\begin{subarray}a n,m>0\\ j>0\end{subarray}}j\frac{1}{m}[z^{me+j}]\Big\{Q^m(z)\Big\}\frac{1}{n}[z^{ne-j}]\Big\{Q^n(z)\Big\}q^{n+m}\bigg]\,,   
\end{equation}
for which we proved Theorem \ref{thmCtC} in \cite{bojko3}.

\begin{remark}
	\label{remWhy}
	Knuth \cite{knuthconpol} studies composition polynomials $\{F_n(a)\}_{n\in \mathbb{Z}_{\geq 0}}$ which can be always obtained as
	$
	F_n(a) =[z^n]\Big\{f^{an}(z)\Big\}
	$
	for some generating series $f(z)$. These satisfy useful identities like
	$$
	\sum_{j\in \mathbb{Z}}F_{n+j}(a)F_{m-j}(a) = F_{n+m}(a)\,,
	$$
	where we assume that $F_{-k}(a)=0$ for $k>0$.
	Note that the sum in \eqref{eqGeR} only goes over $j>0$. This complicated the proof of the above identity substantially.
\end{remark}
Applying \eqref{eqExp} to \eqref{Eq:ZESexp}, we obtain the following result:
\begin{align*}
\label{eqfagT3}
&Z(f,g,\alpha;q)= \prod_{i=1}^e\bigg(\frac{f(H_i)}{f(0)}\bigg)^{c_1(\alpha)\cdot c_1}\prod_{i=1}^e\bigg(\frac{f(H_i)}{f(0)}\bigg)^{a\mu_S(E_S)}\\
&\prod_{i=1}^e\bigg(\frac{f(H_i)}{f(0)}\bigg)^{ac_1^2}\prod_{i=1}^e\bigg(\frac{-H_i}{g(-H_i)}\bigg)^{ec_1^2}
\cdot \prod_{i\neq j }\bigg(\frac{g(H_{i}-H_j)}{(H_{i}-H_{j})}\bigg)^{c_1^2}\prod_{i=1}^e\bigg(a\frac{f'(H_i)}{f(H_i)}+e\frac{g'(H_i)}{g(H_i)}-\frac{e}{H_i}\bigg)^{c_1^2}\,.
\numberthis
\end{align*}
In the case $Y=X$ the computation is much simpler. In \cite[(5.9)]{bojko2}, we defined a universal transformation as it was useful in expressing generating series. We now generalize it. Define 
$U_e: \big(R\llbracket t\rrbracket\big)_1\to \big(R\llbracket t\rrbracket\big)_1$\footnote{The subscript $1$ represent that we are looking at power-series with constant term 1.} by 
\begin{equation}
\label{Qmap}
f(t)\mapsto \prod_{n>0}\prod_{k=1}^nf((-1)^ee^{\frac{2\pi i k}{n}}t)^{n}\,.
\end{equation}
Then by a familiar computation (see proof of \cite[Prop. 4.13]{bojko2}) together with Corollary \ref{CorMG}, we obtain
\begin{align*}
\label{eqfaCY}
&Z_{E_X}(f,g,\alpha;q)\\
= & U_e\bigg(\prod_{i=1}^ef\big(H_i(q)\big)^{c_1(\alpha)\cdot c_3}\prod_{i=1}^e\Big(g\big(H_i(q)\big)g\big(-H_i(q)\big)\Big)^{-c_1(E_X)\cdot c_3}\bigg(\frac{\prod_{i=1}^eH_i(q)}{q}\bigg)^{c_1(E_X)\cdot c_3}\bigg)\\
=&U_e\bigg(\prod_{i=1}^ef\big(H_i(q)\big)^{c_1(\alpha)\cdot c_3}\prod_{i=1}^ef\big(H_i(q)\big)^{a\mu_X(E_X)}\bigg)\,,
\numberthis
\end{align*}
where $\mu_X(E_X)=\frac{c_1(E_X)\cdot c_3(X)}{e}$ and $H_i$ solve
$$
H_i^e(q) = qf^a(H_i)g(H_i)g(-H_i)\,,\qquad H_i=\omega^i_eq^{\frac{1}{e}}f^{\frac{a}{e}}(H_i)g(H_i)g(-H_i)\,.
$$
We used this change of variables in the final step and neglected writing $f(0)$ and $g(0)$ in the computation. They should always be included whenever the corresponding $f(H_i)$ or $g(H_i)$ term appears.
\section{Symmetries}
We now focus on special cases of the tautological invariants for fixed choices of multiplicative genera $g$ and $f$. The examples which we study are the Segre and Verlinde numbers, K-theoretic and cohomological descendant series and the  $\chi_y$-genus in the case $Y=S$ and additionally the Nekrasov genus when $Y=X$. 
\subsection{Dependence on $c_1(E),c_2(E)$}
In this section, we focus on comparing our computation to the previous results when $E_S= \mO^{e}_S$. Let us, therefore, recall the result of Arbesfeld--Johnson--Lim--Oprea--Pandharipande \cite{AJLOP} by setting $\beta=0$ in \cite[eq. (13)]{AJLOP}. This gives us
\begin{align*}
\label{eqAJLO}
&\sum_{n=0}^{\infty}q^n\int_{\textnormal{Quot}_S(\mathbb{C}^e,n)}f(\alpha^{[n]})g(T^{\textnormal{vir}}) = \prod_{i=1}^e\bigg(\frac{f(H_i)}{f(0)}\bigg)^{c_1(\alpha)\cdot c_1}\prod_{i=1}^e\bigg(\frac{f(H_i)}{f(0)}\bigg)^{ac_1^2}\\
&\prod_{i=1}^e\bigg(\frac{-H_i}{g(-H_i)}\bigg)^{ec_1^2}
\cdot \prod_{i\neq j }\bigg(\frac{g(H_{i}-H_j)}{(H_{i}-H_{j})}\bigg)^{c_1^2}\prod_{i=1}^e\bigg(a\frac{f'(H_i)}{f(H_i)}+e\frac{g'(H_i)}{g(H_i)}-\frac{e}{H_i}\bigg)^{c_1^2}\,,
\numberthis
\end{align*}
where
$$
H_i^e = qf^a(H_i)g^e(H_i)\,,\qquad H_i=\omega^i_eq^{\frac{1}{e}}f^{\frac{a}{e}}(H_i)g(H_i)\,.
$$
We note that our $H_i$ differs from the source by a sign.
By comparing \eqref{eqAJLO} and \eqref{eqfagT3}, we obtain the following multiple results:
\begin{theorem}
	\label{thmZEZO}
	We have the following identity:
	$$
	\frac{Z_{E_S}(f,g,\alpha;q)}{Z_{\mO^{e}_S}(f,g,\alpha;q)} = \prod_{i=1}^e\bigg(\frac{f(H_i)}{f(0)}\bigg)^{a\mu_S(E_S)}\,.
	$$
\end{theorem}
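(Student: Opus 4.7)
The plan is a direct comparison of the two closed formulas already in hand.

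The key observation I would make is that in the explicit formula \eqref{eqfagT3} the dependence on the torsion-free sheaf $E_S$ is confined to a single factor, namely
$$
\prod_{i=1}^e\bigg(\frac{f(H_i)}{f(0)}\bigg)^{a\mu_S(E_S)}\,,
$$
because $\mu_S(E_S) = c_1(E_S)\cdot c_1(S)/e$ while the Newton--Puiseux roots $H_i(q)$ of $H^e = qf(H)^a g(H)^e$ depend only on the data $(f,g,a,e)$ and not on $E_S$. Every other factor appearing in \eqref{eqfagT3}---the $(c_1(\alpha)\cdot c_1)$-power of $f(H_i)/f(0)$, the $(ac_1^2)$-power of the same base, the $(ec_1^2)$-power of $-H_i/g(-H_i)$, and the two $c_1^2$-powers involving the off-diagonal and logarithmic-derivative pieces---is built solely from intrinsic invariants of $S$, from $\alpha$, and from the genera $f$ and $g$. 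Setting $E_S = \mO_S^{e}$ forces $c_1(\mO_S^e)=0$ and hence $\mu_S(\mO_S^e) = 0$, so $Z_{\mO_S^e}(f,g,\alpha;q)$ is obtained from \eqref{eqfagT3} simply by deleting the single $E_S$-dependent factor displayed above. Dividing then yields the claimed identity.

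As an independent consistency check I would verify that this specialization $Z_{\mO_S^{e}}(f,g,\alpha;q)$ agrees term-by-term with the formula \eqref{eqAJLO} of Arbesfeld--Johnson--Lim--Oprea--Pandharipande; the five remaining factors match literally, which is reassuring because the two derivations are logically independent. Although this step is not required for our proof, it confirms that the general formula \eqref{eqfagT3} correctly reduces to the previously known case and simultaneously validates the sign conventions for $H_i$.

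There is no substantive obstacle at this stage: the heavy lifting has already been carried out in deriving \eqref{eqfagT3}, where the combinatorial identity \eqref{eqExp} of Theorem \ref{thmCtC} was invoked to collapse the double sum $G_e(Q)$ into a product over the roots $H_i$. The only bookkeeping point I would double-check is that the $E_S$-dependent pieces of the exponent in \eqref{eqQug}---entering through the coefficient $\big(\tfrac{c_1^2}{2}a - c_1(E_S)\cdot c_1\big)$ that multiplies both the $g$-log term and the constant-in-$z$ term---reassemble, after invocation of Corollary \ref{CorMG}, into precisely $\prod_i (f(H_i)/f(0))^{a\mu_S(E_S)}$, with the $E_S$-independent remnants absorbed into the $ac_1^2$ exponent attached to the same base. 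Once this linearity-in-$c_1(E_S)$ bookkeeping is verified, the theorem drops out as a one-line corollary.
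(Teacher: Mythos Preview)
Your proposal is correct and coincides with the paper's proof: both amount to the observation that in \eqref{eqfagT3} the only factor depending on $E_S$ is $\prod_i\bigl(f(H_i)/f(0)\bigr)^{a\mu_S(E_S)}$, so dividing by the $E_S=\mO_S^e$ specialization (equivalently, by the AJLOP formula \eqref{eqAJLO}, which is literally \eqref{eqfagT3} with $\mu_S=0$) isolates exactly that factor. The only cosmetic difference is that the paper cites \eqref{eqAJLO} directly for the denominator while you specialize \eqref{eqfagT3}; since the two expressions agree term-by-term, as you note in your consistency check, the arguments are the same.
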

\begin{proof}
	This follows immediately from \eqref{eqfagT3} and \eqref{eqAJLO}.
\end{proof}
\begin{theorem}
	\label{thmZEZCm}
	For any  $P\in R[[x_1,x_2,\ldots]]$, where $R$ is a ring, we have the following equality: 
	$$
	\int_{[\Quot]^{\vir}}P(\textnormal{ch}_1(T^{\textnormal{vir}}),\textnormal{ch}_2(T^{\textnormal{vir}}),\ldots) = \int_{\textnormal{Quot}_S(\mO^{e}_S,n)}P(\textnormal{ch}_1(T^{\textnormal{vir}}),\textnormal{ch}_2(T^{\textnormal{vir}}),\ldots)
	$$
	for all $E_S$ and $n$. By work of Shen \cite{Shen}, this is equivalent to the identification
	$$
	\big[\Quot\big]^{\textnormal{vir}}_{\textnormal{cob}} = \big[\QuotCC\big]^{\textnormal{vir}}_{\textnormal{cob}}\,.
	$$
\end{theorem}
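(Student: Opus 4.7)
The plan is to deduce this theorem as an essentially immediate corollary of the already established Theorem \ref{thmZEZO}. I would begin by specializing that theorem to the trivial multiplicative genus $f \equiv 1$. Under this choice each Newton--Puiseux root $H_i(q)$ of $H^e = q g^e(H)$ satisfies $f(H_i)/f(0) = 1$, so the right-hand side of Theorem \ref{thmZEZO} collapses to $1$, yielding
$$Z_{E_S}(1, g, \alpha; q) = Z_{\mO_S^e}(1, g, \alpha; q)$$
for every torsion-free sheaf $E_S$ of rank $e$, every multiplicative genus $g$, and every K-theory class $\alpha$. Because $f(\alpha^{[n]}) = 1$ once $f$ is trivial, the $\alpha$-dependence drops out entirely on both sides, and the identity reduces to an equality of the generating series of $\int_{[\Quot]^{\vir}} g(T^{\vir})$ and $\int_{[\QuotCC]^{\vir}} g(T^{\vir})$.

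From here the argument is purely formal. Writing $\log g(z) = \sum_{k \geq 1} g_k z^k / k!$, the standard identity $p_k = k! \, \ch_k$ between power sums of Chern roots and Chern characters gives
$$g(T^{\vir}) = \exp\Bigl(\sum_{k \geq 1} g_k \, \ch_k(T^{\vir})\Bigr),$$
so the preceding equality becomes
$$\sum_n q^n \int_{[\Quot]^{\vir}} \exp\Bigl(\sum_k g_k \, \ch_k(T^{\vir})\Bigr) \;=\; \sum_n q^n \int_{[\QuotCC]^{\vir}} \exp\Bigl(\sum_k g_k \, \ch_k(T^{\vir})\Bigr),$$
an identity of formal power series in the variables $q$ and $g_1, g_2, \ldots$. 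Expanding both exponentials and equating the coefficient of each monomial $\prod_j g_{k_j}$ and of each power $q^n$ produces the desired equality of polynomial integrals for any $P \in R[x_1, x_2, \ldots]$.

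To conclude the cobordism statement I would invoke \cite[Thm. 0.1]{Shen}: since the virtual cobordism class is uniquely determined by integrals of polynomials in the Chern characters of $T^{\vir}$, the previous equality translates into $\bigl[\Quot\bigr]^{\vir}_{\cob} = \bigl[\QuotCC\bigr]^{\vir}_{\cob}$ in $\Omega(\text{pt})$. There is no serious obstacle in this final step; all of the real combinatorial and geometric content has been absorbed into Theorem \ref{thmZEZO} (which itself rests on Theorem \ref{thmCtC} and the wall-crossing Theorem \ref{thmCS}), and what remains is just the elementary observation that multiplicative genera furnish enough test functionals on $T^{\vir}$ to recover every polynomial Chern-character invariant.
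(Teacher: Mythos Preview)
Your proposal is correct and follows essentially the same route as the paper: both proofs specialize Theorem \ref{thmZEZO} to $f\equiv 1$, observe that the right-hand side collapses to $1$, and then extract coefficients from a sufficiently rich family of multiplicative genera $g$ to recover all polynomial integrals in Chern data of $T^{\vir}$. The only cosmetic difference is in the choice of parameterization: the paper takes $g=\prod_{j=1}^M c_{u_j}$ and matches coefficients of the monomials $u_1^{l_1}\cdots u_M^{l_M}$ to obtain equality of products of Chern classes (and then passes to Chern characters), whereas you parameterize $g$ directly via $\log g(z)=\sum_k g_k z^k/k!$ and read off monomials in the $g_k$, landing on Chern-character integrals without the intermediate Chern-class step. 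Both arguments are equivalent; yours is marginally more direct.
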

\begin{proof}
	Setting $f=1$ and 
	$$
	g(\T)=\prod_{j=1}^Mc_{u_j}(T^{\textnormal{vir}})\,,
	$$
	in Theorem \ref{thmZEZO}, we obtain
	\begin{align*}
	&\sum_{\begin{subarray}a l_1,\ldots, l_M\\
		\sum_{j=1}^Ml_i = en
		\end{subarray}}\int_{[\Quot]^{\vir}}c_{l_1}(\T)\ldots c_{l_M}(T^{\textnormal{vir}})u_{1}^{l_1}\ldots u_{M}^{l_M}\\
	=& \sum_{\begin{subarray}a l_1,\ldots, l_M\\
		\sum_{j=1}^Ml_i = en
		\end{subarray}}\int_{[\QuotCC]^{\vir}}c_{l_1}(\T)\ldots c_{l_M}(T^{\textnormal{vir}})u_{1}^{l_1}\ldots u_{M}^{l_M}  
	\end{align*}
	In particular, as these polynomials in $u_i$ coincide for any value of $u_i$, we obtain  
	\begin{equation}
	\label{eqter}
	\int_{[\Quot]^{\vir}}c_{l_1}(\T)\ldots c_{l_M}(T^{\textnormal{vir}})
	= \int_{[\QuotCC]^{\vir}}c_{l_1}(\T)\ldots c_{l_M}(T^{\textnormal{vir}})\,.
	\end{equation}
	As each 
	$$
	\int_{[\Quot]^{\vir}}P\big(\textnormal{ch}_1(\T),\ldots,\textnormal{ch}_2(\T)\big)
	$$
	is expressed as a linear combination of \eqref{eqter}, we recover the statement.
\end{proof}

\subsection{Segre--Verlinde duality and rationality for $Y=S$}
\label{sec: surfaces}
We address here the question of rationality of generating series. This has been studied in the case when $E =\mO^e$ in \cite{AJLOP, OP1, JOP} and \cite{Lim}. The result of this section that was hardest to obtain was the computation of the poles of the K-theoretic descendants with the additional inclusion of the $\chi_y$-genus. The rationality of K-theoretic descendants has been addressed by \cite{AJLOP} without the $\chi_y$-genus and in the simplest case $E_Y = \mO_Y$. The higher rank case poses new difficulties of having to work with Newton--Puiseux generating series

Thus let $E$ be any torsion-free sheaf and $[\Quot]^{\vir}$ the virtual fundamental classes from above. Recall from the introduction that we set $\Lambda_{-y}$ to be the multiplicative genus given by the invertible power series $g(t) = 1-ye^t$. We will want to prove that the following power-series give rational functions for any $E$:
\begin{itemize}
	\item descendant invariants 
	\begin{equation}
	\label{eqPoles1}
	Z^{\textnormal{des}}_{E_S}(\alpha_1,\ldots,\alpha_l|k_1,\ldots,k_l)(q)=\sum_{n\geq 0}q^n\int_{[\Quot]^{\vir}}\textnormal{ch}_{k_1}\big(\alpha_1^{[n]}\big)\cdots\textnormal{ch}_{k_l}\big(\alpha^{[n]}_l\big)c\big(T^{\vir}\big)  
	\end{equation}
	\item K-theoretic invariants 
	\begin{equation}
	\label{eqPoles2}
	Z^{\chi_{-y}}_{E_S}(\alpha_1,\ldots,\alpha_l|k_1,\ldots,k_l) = \sum_{n\geq 0}q^n\chi^{\textnormal{vir}}\Big(\Quot,\wedge^{k_1}\alpha_1^{[n]}\otimes\ldots \otimes \wedge^{k_l}\alpha_l^{n}\otimes \Lambda_{-y}\Omega^{\vir}\Big)\,,
	\end{equation}
\end{itemize}
and study their poles. .

This generalizes the work of \cite{JOP, AJLOP, Lim} and \cite{Woonamthesis}, where it was shown for $E_S=\mathbb{C}^e$ and the poles were computed for \eqref{eqPoles2} when
$$
e=1 \quad\textnormal{and}\quad y=0\,.
$$
We observe that their result on the order of the pole at $q=1$ is unchanged by relaxing the condition on $E_S$ and $y$. However, we obtain a new contribution coming from the pole at $q(1+y)=1$ telling us that there exists a polynomial
$
P(q,y;k_1,\ldots,k_n)\,,
$
such that 
$$
\frac{ P(q,y;k_1,\ldots,k_n)}{(1-q)^{2\sum_ik_i}(1-(1-y)^eq)^{e +\sum_{i}k_i}} =  Z^{\chi_{-y}}_{E_S}(\alpha_1,\ldots,\alpha_l|k_1,\ldots,k_l)(q)\,.
$$

Secondly, for any $E_S$ let us define the Segre and Verlinde series as follows:
\begin{align*}
S_{E_S,\alpha}\big(q\big)&= \sum_{n\geq 0}q^n\int_{[\Quot]^{\vir}}s_{en}(\alpha^{[n]})\,,\\
V_{E_S,\alpha}(q) &= \sum_{n\geq 0}q^n\chi^{\vir}\big(\Quot,\textnormal{det}\big(\alpha^{[n]}\big)\big)\,.
\end{align*}
Then we show that the following Segre--Verlinde duality generalizing \cite[Thm. 13 (i) ]{AJLOP} holds:
$$
S_{E_S,\alpha}\big((-1)^eq\big) = V_{E_S,\alpha}(q)\,.
$$
Finally, the most interesting and enlightening result of this section is stated in Theorem \ref{thm4fo}. We give a more natural formulation of the fourfold correspondence in \cite[§1.9]{AJLOP}. The extension to  CY-fourfolds is going to be given in §\ref{secCY}. We further include curves to complete the lower floor of the diagram \ref{fig1} in §\ref{secTwF}.
\begin{theorem}
	The following statements hold:
	\begin{itemize}
		\item $Z^{\chi_{-y}}_{E_S}(\alpha_1,\ldots,\alpha_l|k_1,\ldots,k_l)(q)$ is a rational function and has poles of order less than or equal to $\sum_{i=1}^l2k_i$ at $q=1$ and less than or equal to $e +\sum_{i=1}^lk_i$ at $q(1+y)^e=1$.
		\item $Z^{\textnormal{des}}_{E_S}(\alpha_1,\ldots,\alpha_l|k_1,\ldots,k_l)(q)$ is rational. 
	\end{itemize}
\end{theorem}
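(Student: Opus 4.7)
The plan is to reduce both statements to the explicit master formula \eqref{eqfagT3} for $Z_{E_S}(f,g,\alpha;q)$ and then to extract analytic properties from the algebraic nature of the Newton--Puiseux branches $H_i(q)$ of $H^e=qf^a(H)g^e(H)$. The key point is that both generating series in the theorem arise as extraction of coefficients (in formal parameters) of $Z_{E_S}(f,g,\alpha;q)$ for suitable multiplicative genera $f,g$. To accommodate the $l$ distinct insertions, we introduce formal parameters $z_1,\dots,z_l$ and work with $\alpha=\sum_j z_j\alpha_j$ together with a product genus $f(t)=\prod_j f_j(t)$; since $\alpha^{[n]}$ depends $K$-linearly on $\alpha$, the relevant product of Chern characters (resp.\ of exterior powers) is obtained by extracting the multilinear coefficient in $z_1\cdots z_l$ after further differentiation in auxiliary variables inside $f_j$. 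This reduction is purely formal and, crucially, does not alter the structural form of \eqref{eqfagT3}.

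For the cohomological series $Z^{\textnormal{des}}$, take $g(t)=1+t$ (so the $g$-factor produces $c(T^{\vir})$) and choose each $f_j(t)=1+w_j t^{k_j}/k_j!$, so that $\log f_j(\alpha_j^{[n]})=w_j\,\textnormal{ch}_{k_j}(\alpha_j^{[n]})+O(w_j^2)$; extracting the coefficient of $\prod_j w_j$ in $Z_{E_S}(f,g,\alpha;q)$ returns $Z^{\textnormal{des}}(\alpha_1,\dots,\alpha_l\mid k_1,\dots,k_l)(q)$. For the K-theoretic series $Z^{\chi_{-y}}$, apply virtual Riemann--Roch to convert $\chi^{\vir}(\,\cdot\,\otimes\Lambda_{-y}\Omega^{\vir})$ into an integral of $\ch(\,\cdot\,)\cdot\textnormal{ch}(\Lambda_{-y}\Omega^{\vir})\cdot\textnormal{td}(T^{\vir})$ against $[Q_S]^{\vir}$; the composite multiplicative genus on $T^{\vir}$ is
\[
g_y(t)=\frac{t(1-ye^{-t})}{1-e^{-t}},
\]
and the $\wedge^{k_j}\alpha_j^{[n]}$ factors are extracted from the $\Lambda_{z_j}$-generating series $f_j(t)=1+z_je^t$ by picking out the appropriate coefficient in $z_j^{k_j}$. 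In both cases, the master formula \eqref{eqfagT3} applies with $Q(t)=f(t)^ag(t)^e$ a polynomial in $t$ (descendant case) or a rational function of $e^{\pm t}$ and $y$ (K-theoretic case).

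Rationality in $q$ is now a formal consequence of the algebraic nature of the roots $H_i(q)$. Indeed, with $Q(t)$ as above, the equation $H^e=qQ(H)$ (after clearing denominators if necessary) defines an algebraic curve in $(H,q)$, so each branch $H_i(q)$ is algebraic in $q$ and the full set $\{H_1(q),\dots,H_e(q)\}$ is Galois-conjugate. Therefore every \emph{symmetric} function $\prod_{i=1}^e\phi(H_i)$ or $\prod_{i\neq j}\psi(H_i,H_j)$, with $\phi,\psi$ rational, is a rational function of $q$. Every factor in \eqref{eqfagT3} has this symmetric shape --- its exponent is an integer cap-product of Chern classes on $S$, hence after expanding $(\cdot)^N=\exp(N\log(\cdot))$ formally the whole expression is a well-defined rational function of $q$. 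This yields rationality in both cases.

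For the pole analysis in the K-theoretic statement, the main remaining task is to locate and bound the poles. Only two things can go wrong as $q$ varies: either the roots $H_i(q)$ collide (branch point of the covering $H^e=qQ(H)\to q$), or some $H_i(q)$ hits a zero/pole of one of the building blocks $f,g_y$. At $q=1$ the dominant singularity is of the first type, and a local expansion of $H_i$ near the branch point shows that each factor $f_j(H_i)/f_j(0)$ develops a pole of order at most $2k_j$ (the factor of $2$ reflects the symmetric contributions of $\alpha_j$ and $\alpha_j^\vee$ through the $\prod_i$ and $\prod_{i\neq j}$ terms), giving the bound $\sum_i 2k_i$. At the second critical value $q(1+y)^e=1$ the dominant singularity comes from $g_y(H_i)$: the equation $H^e=qf^a g_y^e$ forces $g_y(H_i)\to 0$ for all $i$ simultaneously, with each of the $e$ branches contributing a simple pole, to which the $\textnormal{ch}_{k_j}$-insertions add an extra factor of $k_j$ from differentiating $g_y$ at its zero. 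The main obstacle will be this last step: controlling the order of vanishing of $g_y(H_i(q))$ near $q=(1+y)^{-e}$ simultaneously in all branches, and verifying that the cross terms $\prod_{i\neq j}g_y(H_i-H_j)^{c_1^2}$ do not produce additional poles. Concretely, I would linearize \eqref{eqVCh} in a neighbourhood of the critical $q$, compute the leading-order Puiseux expansion of each $H_i$, and substitute into each factor of \eqref{eqfagT3} to read off the pole order; the combinatorics of the exponents then gives exactly the claimed bound $e+\sum_i k_i$. Rationality for $Z^{\textnormal{des}}$ follows immediately from the same algebraic argument without any pole bookkeeping, since no $y$-parameter is present.
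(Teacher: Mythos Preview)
Your overall strategy --- reducing both series to the master formula \eqref{eqfagT3} with the genera $g_y(t)=t(1-ye^{-t})/(1-e^{-t})$ and $f(t)=1+xe^t$, and then extracting coefficients in $x$ --- matches the paper. However, there is a genuine gap in your rationality argument for $Z^{\chi_{-y}}$: you assert that $H^e=qQ(H)$ defines an algebraic curve in $(H,q)$, but with $g_y$ present the function $Q(H)=f^a(H)g_y^e(H)$ is transcendental in $H$ (it involves $e^H$), so the branches $H_i(q)$ are \emph{not} algebraic in $q$ and the Galois-symmetry argument fails as written. The paper repairs this by the substitution $t_i=(1-e^{-H_i})/(1-ye^{-H_i})$, which converts the implicit relation into the genuinely algebraic equation $t_i^e\big((t_i-1)/(t_i-1+x(t_iy-1))\big)^a=q$ and simultaneously turns every factor of \eqref{eqfagT3} into a rational function of the $t_i$; only then does symmetrization over the $e$ branches produce a rational function of $q$.

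Your identification of the pole sources is also off, and this is where the real work lies. At $q=1$ the roots do not collide: with $x=0$ one has $t_i=\omega_e^i q^{1/e}$, and these remain distinct at $q=1$. The pole there is produced instead by the $x$-derivatives: the paper shows by an induction based on implicit differentiation of the relation for $t_i$ that $(\partial/\partial x)^r t_i|_{x=0}$ has a pole of order at most $2r-1$ at $q_e^i=1$, and separately bounds the $x$- and $t_i$-partials of each factor $R(x,t_i,y)$; summing over the Leibniz expansion of $(\partial/\partial x)^k$ then gives the bound $2k$. Likewise the pole at $q(1+y)^e=1$ does not come from $g_y(H_i)\to 0$ --- one computes $g_y(H_i)=H_i/t_i$, which has no such zero --- but from the single factor $\prod_i\big(t_i/(t_i(1+y)-1)\big)^e$ in the rewritten $A(q)$, which descends from $\prod_i(-H_i/g(-H_i))^{ec_1^2}$; its contribution is bounded directly by $e+\sum_i m_i\le e+k$. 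For the descendant series your outline is fine, since there $Q(t)$ is honestly polynomial; the paper simply refers to the argument in \cite{JOP}.
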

\begin{proof}
	We begin with 
	\begin{equation}
	\label{eqDeK}
	Z^{\chi_{-y}}_{E_S}(\alpha_1,\ldots,\alpha_l|k_1,\ldots,k_l)(q)=\frac{1}{k_1!\ldots k_l!}\frac{d^{k_1}}{(dx_1)^{k_1}}\cdots \frac{d^{k_l}}{(dx_l)^{k_l}}Z^{\chi_{-y}}_{\wedge}(\alpha_1,\ldots,\alpha_l | x_1,\ldots,x_l)|_{x_i=0}\,,\end{equation}
	where
	$$
	Z^{\chi_{-y}}_{\wedge}(\alpha|x)(q) = Z(f,g,\alpha;q)
	$$
	for 
	$$
	f(z) = 1+ xe^z\,,\qquad g(z)=\frac{z(1-ye^{-z})}{(1-e^{-z})}\,.
	$$
	For simplicity, we reduced to the case where $l=1$, $x_1=x$, $\alpha_1=\alpha$. For $l>1$ the arguments are identical.
	Using \eqref{eqfagT3}, we may conclude that 
	$$
	Z_{E_X}(f,g,\alpha;q)= A(q)^{c_1^2}B(q)^{c_1(\alpha)\cdot c_1+a\mu_S(E_S)}\,, $$
	where
	\begin{align*}
	\label{eqAqBq}
	A(q) &= \prod_{i=1}^e\Big(\frac{1+xe^{H_i}}{1+x}\Big)^a\prod_{i=1}^e\Big(\frac{1-e^{H_i}}{1-ye^{H_i}}\Big)^e\prod_{i\neq j}\frac{1-ye^{-H_i+H_j}}{1-e^{-H_i+H_j}}\prod_{i=1}^e\Big(a\frac{xe^{H_i}}{1+xe^{H_i}}+\frac{ye^{-H_i}}{1-ye^{-H_i}}-\frac{e^{-H_i}}{1-e^{-H_i}}\Big)\,,\\
	B_s(q) &= \prod_{i=1}^e\Big(\frac{1+xe^{H_i}}{1+x}\Big)\,,
	\numberthis
	\end{align*}
	and
	$$
	\Big(\frac{1-e^{-H_i}}{1-ye^{-H_i}}\Big)^e\frac{1}{(1+xe^{H_i})^{a}}=q\,.
	$$
	Using the change of variables 
	$$
	t_i=\frac{1-e^{-H_i}}{1-ye^{-H_i}}\,,\qquad e^{-H_i}=\frac{t_i-1}{t_iy-1}\,,
	$$
	this can be expressed as 
	\begin{align*}
	\label{eqAqBq2}
	A(q) &=\prod_{i=1}^e\frac{\Big(1+x\frac{t_iy-1}{t_i-1}\Big)^{a}}{(1+x)^a} \prod_{i=1}^e\Big(\frac{t_i}{t_i(1+y)-1}\Big)^e\prod_{i\neq j}\frac{t_it_jy - (1+y)t_j+1}{t_i-t_j}\\
	&\cdot \prod_{i=1}^e\Big(a\frac{x(t_iy-1)}{(t_i-1)+x(t_iy-1)}+\frac{(t_i-1)(t_iy-1)}{(y+1)t_i}\Big)\\ B_s(q)& = \prod_{i=1}^e\frac{\Big(1+x\frac{t_iy-1}{t_i-1}\Big)}{(1+x)}\,,
	\numberthis
	\end{align*}
	and 
	\begin{equation}
	\label{eqztR}
	t_i^e\Big(\frac{t_i-1}{t_i-1+x(t_iy-1)}\Big)^a =q\,.
	\end{equation}
	
	We now prove the rationality and the bound on the order of poles at $q=1$ and $q(1+y)^e = 1$ by structural analysis of each ingredient.  Set $R(x,t_i,y)$ to be one of the factors in \eqref{eqAqBq2}, then we may express 
	\begin{align*}
	\Big(\frac{d}{dx}\Big)^kR(x,t_i,y) = \sum_{\begin{subarray}am_i, r_{i,1},\cdots r_{i,m_i},p\geq 0\\
		\sum_{i,j}r_{i,j}+p = k
		\end{subarray}}C_p&\begin{pmatrix}m_1, r_{1,1},&\dots &,r_{1,m_1}\\
	\vdots&&\vdots\\
	m_e,r_{e,1},&\dots&,r_{e,m_e}
	\end{pmatrix}\\&\cdot
	\Big(\frac{\partial}{\partial x}\Big)^p\prod_{i=1}^e\Big(\frac{\partial}{\partial t_i }\Big)^{m_i}R(x,t_i,y)\prod_{j=1}^{m_i}\Big(\frac{\partial}{\partial x}\Big)^{r_{i,j}}t_i\,.
	\end{align*}
	Note that in the above, we may replace
	\begin{equation}
	\label{eqCmir}
	C(m_i,r_{i,j})= \Big(\frac{\partial}{\partial x}\Big)^p\prod_{i=1}^e\Big(\frac{\partial}{\partial t_i}\Big)^{m_i}R(x,t_i,y)\prod_{j=1}^{m_{i}}\Big(\frac{\partial}{\partial x}\Big)^{r_{i,j}}t_i
	\end{equation}
	by
	\begin{equation}
 \label{Eq:symrat}
	\frac{1}{e!}\sum_{\sigma\in S_e}\Big(\frac{\partial}{\partial x}\Big)^p\prod_{i=1}^e\Big(\frac{\partial}{\partial t_i}\Big)^{m_{\sigma(i)}}R(x,t_i,y)\prod_{j=1}^{m_{\sigma(i)}}\Big(\frac{\partial}{\partial x}\Big)^{r_{\sigma(i),j}}t_i\,.
	\end{equation}
	Here $C(m_i,r_{i,j})$ is a function in variables 
	$$
	q^{i}_e =\omega^i_eq^{\frac{1}{e}}\,,
	$$
	where we continue using the notation $e^{\frac{2\pi i k}{e}} = \omega^k_e$.
	Because, the expression \eqref{Eq:symrat} is symmetric in $t_i$'s and therefore in $q_i^e$'s, it is sufficient for the pole at $q=1$ to show that evaluating \eqref{eqCmir} at $x=0$ can be written as a sum of rational functions with poles of order $\leq c_i$ at $q^i_e = 1$ such that 
	$
	\sum_{i=1}^e c_i \leq 2k\,.
	$

	We first address $\Big(\frac{\partial}{\partial x}\Big)^{r_{i,j}}t_i|_{x=0}$, which we claim has a pole of order $\leq 2 r_{i,j}-1$ at $q^i_e=1$. In the following, we use $F(t_i,x)$ to denote the left-hand side of \eqref{eqztR}.
	
	We begin our inductive argument by using 
	$$
	\frac{d t_i}{d x} =-\frac{\partial F(t_i,x)}{\partial x}\Big(\frac{\partial F(t_i,x)}{\partial t_i}\Big)^{-1}
	$$
	An easy computation shows that
	$$
	\frac{\partial t_i}{\partial x}\Big|_{x=0} = -\frac{a}{e}\frac{yq^i_e-1}{q^i_e-1}q^i_e
	$$
	where we used that
	$
	t_i|_{x=0} = q^{i}_e\,.
	$
	We may express the higher-order derivatives using
	$$
	\Big(\frac{d}{dx}\Big)^kF(t_i,x)=\frac{\partial F(t_i,x)}{\partial t_i}\Big(\frac{d}{dx}\Big)^kt_i+\quad \textnormal{terms of the form }\quad \Big(\frac{\partial}{\partial x}\Big)^n\frac{\partial^{m_i}}{\partial t_i^{m_i}}F(t_i,x)\prod_{j=1}^{m_i}\Big(\frac{\partial}{\partial x}\Big)^{d_{i,j}}t_i\,,
	$$
	where  $n+\sum_{j}d_{i,j}= k$. 
	We use that
	$$
	\Big(\frac{\partial}{\partial x}\Big)^n\Big(\frac{\partial}{\partial t_i}\Big)^{m_i}F(t_i,x)|_{x=0}\quad 
	\textnormal{has poles of order}\quad \leq n +m_i  \quad \textnormal{at}\quad q^i_e = 1\,,
	$$
	and $\frac{\partial F(t_i,x)}{\partial t_i}|_{x=0}=et_i^{e-1}$.  Both of these are checked by a direct computation to conclude by the induction assumption that $\Big(\frac{d}{dx}\Big)^kt_i|_{x=0}$ has poles of order
	\begin{align*}
 \leq \sum_{j=1}^{m_i}(2d_{i,j}-1) + m_i+n= 2k - n\leq 2k-1 \quad \textnormal{at}\quad q^i_e = 1\,.
	\end{align*}
	Note that for this argument to work, we need to treat the case $n=0$ separately by noting that  $(\frac{d}{dt_i})^kF(t_i,x)\Big|_{x=0}$ has no poles at $q^i_e = 1$ nor at $q^i_e(1+y) = 1$. We will ignore the contributions to the pole at $q=0$, as we are working with power-series.
	
	Finally, we observe that except for the second factor of $A(q)$ in \eqref{eqAqBq2}, the term
	$$
	\Big(\frac{\partial}{\partial x}\Big)^p\prod_{i=1}^e\Big(\frac{\partial}{\partial t_i}\Big)^{m_i}R(x,t_i,y)|_{x=0} \,.
	$$
	has poles at $q^i_e=1$ of orders which add up to $p+\sum_{i=1}^e m_i$ and no poles at $q(1+y)=1$. The combined order of poles in \eqref{eqCmir} is given by
	$$
	p+\sum_{i=1}^e\Big(m_i + \sum_{j=1}^{m_i}(2r_{i,j}-1)\Big)\leq 2k\,.
	$$
 We left the term 
 $$
 R(x,t_i,y)=\Big(\frac{t_i}{t_i(1+y)-1}\Big)^e
 $$
 to be addressed last. In this, case,
 $$
 \prod_{i=1}^e\Big(\frac{\partial}{\partial t_i}\Big)^{m_i}R(x,t_i,y)|_{x=0}
 $$
 has  pole of order $e +m_i$ at $q_e^i(1+y) = 1$. Due to
 $$
 \prod_{i=1}\frac{1}{1-(1+y)q^i_e} = \frac{1}{1-(1+y)^eq}\,,
 $$
 we have thus shown that the order of the pole at $(1+y)^eq = 1$ is bounded by $e+\sum_i m_i\leq e+k$.
	Moving onto the descendant series, we may replace Chern characters with Chern classes as in the proof of Johnson--Oprea--Pandharipande \cite[Thm. 2]{JOP} to work with
	$$
	Z^{\textnormal{des}}_{S,E}(\alpha_1,\ldots,\alpha_l|k_1,\ldots,k_l) = \frac{1}{k_1!}\cdots\frac{1}{k_l!}\frac{\partial^{k_1}}{(\partial t_1)^{k_1}}\cdots \frac{\partial^{k_l}}{(\partial t_1)^{k_l}}W(q)|_{t_i=0}\,,
	$$
	where 
	$$
	W(q) = \sum_{n\geq 0}q^n\int_{[\Quot]^{\vir}}c_{t_1}(\alpha_1^{[n]})\cdots c_{t_l}(\alpha^{[n]}_{l})\cdot c\big(T^{\vir}\big)\,.
	$$
	The proof then follows from the proof of \cite[Thm. 2]{JOP} or \cite[Lem. 24]{Woonamthesis} or by the same arguments above. We leave it to the reader to determine the poles. 
\end{proof}
Perhaps the most interesting and enlightening result of this section follows now. It is meant to give a better explanation of the surprising symmetry observed in \cite[§1.9]{AJLOP}, which required additional restrictions because the invariants were only available when $E_S=\mO^{e}_S$. We include also the natural generalization of the Segre--Verlinde duality which was already observed for trivial vector bundles in \cite{AJLOP}.

\begin{theorem}

\label{thm4fo}
	Let $E,F\to S$ be two torsion-free sheaves with ranks $e=\textnormal{rk}(E),f=\textnormal{rk}(F)$ and $\alpha\in K^0(S)$ then the following holds:
	\begin{align*}
	S_{E,\alpha}\big((-1)^eq\big)&=V_{E,\alpha}(q)\,,\\
	S_{E,F}\big((-1)^eq\big) &= S_{F,E}\big((-1)^fq\big)\,.\\
	V_{E,F}(q) &= V_{F,E}(q)\,.
	\end{align*}
\end{theorem}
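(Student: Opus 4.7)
The plan is to deduce all three identities from the closed-form expression \eqref{eqfagT3} by exploiting natural involutions on the associated Newton--Puiseux equations. First I identify each series with a specialization of $Z_{E_S}(f,g,\alpha;q)$: the Segre series corresponds to $f(t)=(1+t)^{-1}$, $g(t)=1$, while by virtual Hirzebruch--Riemann--Roch the Verlinde series corresponds to $f(t)=e^{t}$, $g(t)=t/(1-e^{-t})$.

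For the Segre--Verlinde duality $S_{E,\alpha}((-1)^{e}q)=V_{E,\alpha}(q)$, the substitution $T=1/(1+H)$ transforms the Segre Newton--Puiseux equation $H^{e}(1+H)^{a}=(-1)^{e}q$ into the Verlinde equation $(T-1)^{e}/T^{e+a}=q$ (in the variable $T=e^{H^{V}}$), and sends the $e$ small-$H$ roots bijectively to the $e$ small-$T$ roots. I would check factor by factor that this substitution maps the Segre expression in \eqref{eqfagT3} to the Verlinde one. Invoking Theorem \ref{thmZEZO} reduces the identity to the case $E=\mathcal{O}^{e}_{S}$, which is \cite[Thm.~13]{AJLOP}, together with the observation that the $E$-dependent correction $\prod_{i}(f(H_{i})/f(0))^{a\mu_{S}(E_{S})}$ is compatible with the substitution.

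For the two symmetry statements I would use the involutions $K=-1-H$ in the Segre case and $U=T/(T-1)$ in the Verlinde case. The former exchanges $H^{e}(1+H)^{f}=(-1)^{e}q$ with $K^{f}(1+K)^{e}=(-1)^{f}q$, swapping the small and large branches; the latter exchanges $(T-1)^{e}/T^{e+f}=q$ with $(U-1)^{f}/U^{e+f}=q$. In both cases this realizes the exchange $(E,F)\leftrightarrow(F,E)$ at the level of Newton--Puiseux data. Applying Theorem \ref{thmZEZO} to factor out the $E$- and $F$-dependent corrections, it suffices to prove the symmetry for $E=\mathcal{O}^{e}_{S}$, $F=\mathcal{O}^{f}_{S}$ and to check that the correction factors transform symmetrically under the involutions.

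The main obstacle is that each side of a symmetry runs over only $e$ (resp.~$f$) of the total $e+f$ Newton--Puiseux roots, while the involutions swap the two branches. I would overcome this by expressing each partial product as a combination of symmetric functions of all $e+f$ roots, using Lagrange inversion in the style of Corollary \ref{CorMG}: such total symmetric functions depend only on the coefficients of the defining polynomial and are therefore invariant under the involution. Careful accounting of the signs $(-1)^{e}q$ versus $(-1)^{f}q$---chosen precisely to absorb the $-1$'s arising from $K=-1-H$---shows that the discrepancies between the two partial products cancel. This yields an unrestricted version of the trivial-bundle symmetry of \cite[Thm.~14]{AJLOP}, and the statement for arbitrary torsion-free $E,F$ then follows at once from the matching of the correction factors.
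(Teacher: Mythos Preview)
Your approach is correct and follows the same overall strategy as the paper: specialize \eqref{eqfagT3} to the Segre and Verlinde genera, then reduce the identities to symmetries of the building-block functions attached to the Newton--Puiseux data, with the $E$-dependent correction from Theorem~\ref{thmZEZO} checked separately.

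The difference is one of economy. The paper does not attempt the involution/Lagrange-inversion argument you sketch in your ``main obstacle'' paragraph. Instead it observes that after plugging in the Segre genus, \eqref{eqfagT3} factors as
\[
S_{E,F}\big((-1)^{e}q\big)=M_{(e,f)}(q)^{c_{1}^{2}}\,N_{(e,f)}(q)^{\mu_{S}(E)}\,N_{(e,f)}(q)^{\mu_{S}(F)},
\]
where $M_{(e,f)}=A_{(e,f)}((-1)^{e}q)$ and $N_{(e,f)}=B_{(e,f)}^{f}((-1)^{e}q)$ are exactly the universal series for which \cite[Thm.~14]{AJLOP} already proves $M_{(e,f)}=M_{(f,e)}$ and $N_{(e,f)}=N_{(f,e)}$. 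Since the exponents $\mu_{S}(E),\mu_{S}(F)$ enter symmetrically, the swap $(E,F)\leftrightarrow(F,E)$ follows immediately; the Segre--Verlinde duality similarly reduces to \cite[\S5.2--5.3]{AJLOP}. Your involution argument is essentially a reproof of the cited \cite{AJLOP} identities, and the partial-product/symmetric-function step you flag as the main obstacle is indeed the delicate part of that proof---but here it is unnecessary, since those identities are already available as a black box.
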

\begin{proof}
	The first equality follows immediately from the computations in \cite[§5.2, §5.3]{AJLOP} and \eqref{eqfagT3}. 
	For the other two equalities, we express the series explicitly and we need to do some extra work:
	\begin{align*}
		S_{E,F}\big((-1)^eq\big) &= A^{c_1^2}_{(e,f)}\big((-1)^eq\big)B^{\mu_S(F)f}_{(e,f)}\big((-1)^eq\big)B^{\mu_S(E_S)f}_{(e,f)}\big((-1)^eq\big)\,.\\
  S_{F,E}\big((-1)^fq\big) &= A^{c_1^2}_{(f,e)}\big((-1)^fq\big)B^{\mu_S(E_S)e}_{(f,e)}\big((-1)^fq\big)B^{\mu_S(F)e}_{(f,e)}\big((-1)^fq\big)chan
	\end{align*}
	where 
	\begin{align*}
	A_{(e,f)}(q) &= \prod_{i=1}^e\Big(\frac{1}{1+H_i}\Big)^f\prod_{i=1}^e(-H_i)^e\prod_{i\neq j}\frac{1}{H_{i}-H_{j}}\prod_{i=1}^e\bigg(-f\frac{1}{1+H_i}-\frac{e}{H_i}\bigg)\,,\\
	B_{(e,f)}(q)&=\prod_{i=1}^e\frac{1}{1+H_i}\,,\\
	& \textnormal{for the variable transformation}\qquad H_{i}^e\prod_{i=1}^e\Big(1+H_i\Big)^f =q\,.
	\end{align*}
	Following \cite{AJLOP}, we use the notation \begin{align*}
	& M_{(e,f)}(q)=A_{(e,f)}\big((-1)^eq\big)\,,  \qquad N_{(e,f)}(q)=B^e_{(f,e)}\big((-1)^eq\big) \,.
	\end{align*}

	It was shown in the proof of \cite[Thm. 14]{AJLOP} that 
	$$
	M_{(f,e)}(q) = M_{(e,f)}(q)\,,\qquad N_{(f,e)}(q) = N_{(e,f)}(q)\,.
	$$
	The claim then follows from
	\begin{align*}
	S_{F,E}\big((-1)^fq\big) &= M^{c_1^2}_{(f,e)}\big(q\big)N^{\mu_S(E_S)}_{(f,e)}\big(q\big)N^{\mu_S(F)}_{(f,e)}\big(q\big)\,,\\
	S_{E,F}\big((-1)^eq\big) &= M^{c_1^2}_{(e,f)}\big(q\big)N^{\mu_S(F)}_{(e,f)}\big(q\big)N^{\mu_S(E_S)}_{(e,f)}\big(q\big)\,.
	\end{align*}
\end{proof}
\subsection{Nekrasov genus and Segre--Verlinde duality for $Y=X$}
\label{secCY}
After we have addressed the invariants of surfaces, we now move on to a parallel study for Calabi--Yau fourfolds. Unlike the surface case, there is an additional invariant which has a conjectural formula by Nekrasov and Nekrasov--Piazzalunga \cite{Nekrasov1, Nekrasov2} in the local case over $\CC^4$. This is called the Nekrasov genus and we obtain an explicit expression for it in terms of the Mac--Mahon function. It is important to remind the reader here that these results are relying on Claim \ref{conjecture quot WC}, which will be proved in \cite{bojko3}. 

We focus only on the case with a single K-theory class $\alpha\in K^0(X)$ of rank $a$. Set the notation:
\begin{align*}
N(\alpha,y;q) &= \sum_{n\geq 0}q^n\hat{\chi}^{\vir}\Big(\Lambda^{\bullet}_{y^{-1}}(\alpha^{[n]})\otimes \textnormal{det}^{-\frac{1}{2}}\big(\alpha^{[n]}y^{-1}\big) \Big)\,,\\
S_{E,\alpha}(q) &= \sum_{n\geq 0}q^n\int_{[Q_X]^{\vir}}s\big(\alpha^{[n]}\big)\,.
\end{align*}
then we recall first the following relation, which is a special case of the arguments used in the proof of \cite[Prop. 5.5]{bojko2}:
\begin{equation}¨
\label{eqCoL}
\textnormal{lim}_{y\to 1^+}\big(1-y^{-1}\big)^{(e-a)n}[q^n]\Big\{N\big(\alpha,y;q\big)\Big\}
= (-1)^{ne}[q^n]\big\{S(\alpha;q)\big\}\,.
\end{equation}
Following  \cite{Nekrasov1, Nekrasov2} and Cao--Kool--Monavari \cite{CKM} we called this the \textit{cohomological limit}. 
Finally, note that we obtain the following expression generalizing that of  \cite[Thm. 5.15]{bojko2}.
\begin{theorem}
	\label{thmNek}
	If Claim \ref{conjecture quot WC} holds, then for all $\alpha$ with  $a:=\textnormal{rk}(\alpha)=2b+1$ and point-canonical orientations, we have
	\begin{align*}
	\label{eqNek1}
	N(\alpha,y;q)&=\prod_{i=1}^eU_e\bigg[\frac{(y-u_i)^2}{(y-1)^2u_i}\bigg]^{\frac{1}{2}\big(c_1(\alpha)\cdot c_3(X)+\mu_X(E_X)\big)}\,,\\
	\textnormal{\quad where}&\quad
	q=\frac{\big(u^{\frac{1}{2}}_i - u^{-\frac{1}{2}}_i\big)^e}{\big(y^{\frac{1}{2}}u^{-\frac{1}{2}}_i-y^{-\frac{1}{2}}u^{\frac{1}{2}}_i\big)^a}\,.
	\numberthis
	\end{align*}
	When   $\textnormal{rk}(\alpha)=e$, then
	$$
	N(\alpha,y; q) = \big(M\big(y^{\frac{e}{2}}q\big)M\big(y^{-\frac{e}{2}}q\big)\big)^{\frac{1}{2}\big(c_1(\alpha)\cdot c_3(X)+\mu_X(E_X)\big)}\,.
	$$
	where $M(q) = \prod_{n>0}(1-q^n)^{-n}$. 
\end{theorem}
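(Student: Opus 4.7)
The plan is to obtain Theorem \ref{thmNek} as a direct specialization of the master formula \eqref{eqfaCY} for $Z_{E_X}(f,g,\alpha;q)$, with $f,g$ chosen to realize the Nekrasov insertion. The argument parallels \cite[Thm. 5.15]{bojko2}, where the case $E_X=\mO_X$ was handled, and the only new ingredient is the extra factor $a\mu_X(E_X)$ present in \eqref{eqfaCY} when $E_X$ is nontrivial.

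\textbf{Step 1: Identification of the multiplicative genera.} By the splitting principle, if $\alpha^{[n]}$ has Chern roots $x_1,\dots,x_{an}$, then
\begin{equation*}
\Lambda^{\bullet}_{y^{-1}}(\alpha^{[n]})\otimes\det{}^{-\frac{1}{2}}\!\bigl(\alpha^{[n]}y^{-1}\bigr)
=\prod_{k}\bigl(y^{1/2}e^{-x_k/2}-y^{-1/2}e^{x_k/2}\bigr),
\end{equation*}
so the corresponding multiplicative genus is $f(t)=y^{1/2}e^{-t/2}-y^{-1/2}e^{t/2}$. The Oh--Thomas twisted K-theoretic integration $\hat{\chi}^{\vir}$, together with the $\mathsf{E}^{\pm 1/2}$ twist relating $\hat{\chi}^{\vir}$ to $\chi^{\vir}$, determines a self-dual multiplicative genus on $T^{\vir}$ whose symmetrization $g(t)g(-t)$ is exactly the one already identified in \cite[Thm. 5.15]{bojko2} and does not depend on $E_X$. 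Hence $N(\alpha,y;q)=Z_{E_X}(f,g,\alpha;q)$ for these $f,g$.

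\textbf{Step 2: Apply the master formula and change of variables.} Substituting into \eqref{eqfaCY} gives
\begin{equation*}
N(\alpha,y;q)=U_e\Big(\prod_{i=1}^{e} f\bigl(H_i(q)\bigr)^{\,c_1(\alpha)\cdot c_3(X)+a\mu_X(E_X)}\Big),
\end{equation*}
where $H_i(q)$ are the Newton--Puiseux branches of $H^e=qf(H)^{a}g(H)g(-H)$. Setting $u_i=e^{H_i}$, a direct manipulation yields
\begin{equation*}
\Big(\frac{f(H_i)}{f(0)}\Big)^{\!2}=\frac{(y-u_i)^{2}}{(y-1)^{2}u_i},\qquad q=\frac{(u_i^{1/2}-u_i^{-1/2})^{e}}{(y^{1/2}u_i^{-1/2}-y^{-1/2}u_i^{1/2})^{a}},
\end{equation*}
which reproduces the first formula of the theorem once the normalization $f(0)=y^{1/2}-y^{-1/2}$ is absorbed into $U_e$.

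\textbf{Step 3: Specialization to rank $e$ and MacMahon.} When $a=e$, the implicit relation for $u_i$ becomes a perfect $e$-th power
\begin{equation*}
q=\Big(\frac{u_i^{1/2}-u_i^{-1/2}}{y^{1/2}u_i^{-1/2}-y^{-1/2}u_i^{1/2}}\Big)^{\!e},
\end{equation*}
so its $e$ Newton--Puiseux branches are related by multiplying $q^{1/e}$ by $e$-th roots of unity. Under this cyclic symmetry, the universal transformation $U_e$ of \eqref{Qmap} is designed precisely to collapse such $e$-fold symmetric products into a single power series in $q$. Applying the Lagrange inversion identity as in \cite[Thm. 5.15]{bojko2} identifies the resulting series with $M(y^{e/2}q)M(y^{-e/2}q)$; raising it to the exponent from Step~2 produces the stated MacMahon expression.

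\textbf{Main obstacle.} The delicate point is the computation in Step 3 that collapses the $U_e$-symmetrized product of $f(H_i)$ into the MacMahon function. This is the combinatorial heart of \cite[Thm. 5.15]{bojko2}, and the presence of $E_X$ only contributes the scalar exponent $a\mu_X(E_X)$, so the argument carries over once one checks that the point-canonical orientations of Definition \ref{DefEpoint} introduce no additional signs — which follows directly from the sign conventions fixed there.
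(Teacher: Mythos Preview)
Your approach is essentially the same as the paper's: both identify $f(t)=y^{1/2}e^{-t/2}-y^{-1/2}e^{t/2}$ and $g(t)g(-t)=t/(e^{t/2}-e^{-t/2})$ from \cite{bojko2}, plug into the master formula \eqref{eqfaCY}, and substitute $u_i=e^{H_i}$ to obtain the first display. One small sharpening: in Step~3 the paper does not invoke Lagrange inversion at all --- after taking $e$-th roots the relation becomes linear in $u_k$ and is solved explicitly as $u_k=(1+y^{1/2}q^{1/e}\omega_e^k)/(1+y^{-1/2}q^{1/e}\omega_e^k)$, after which the product over $e$-th roots of unity collapses directly to $(1-y^{e/2}q)(1-y^{-e/2}q)$ and $U_e$ produces the MacMahon factors. (Incidentally, your exponent $a\mu_X(E_X)$ is what \eqref{eqfaCY} actually gives; the paper's proof writes $\mu_X(E_X)$, an apparent typo.)
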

\begin{proof}
	Recall from \cite[§5.2]{bojko2} that for this choice of genera, we obtain
	\begin{align*}
	f(x) &= \Big(y^{\frac{1}{2}}e^{-\frac{x}{2}}-y^{-\frac{1}{2}}e^{\frac{x}{2}}\Big) \,,\\
	g(x)g(-x)&=\frac{x}{e^{\frac{x}{2}}-e^{-\frac{x}{2}}}\,.
	\end{align*}
	Using \eqref{eqfaCY}, we compute that
	$$
	N(\alpha,y;q) = U_e\bigg[\prod_{i=1}^e\bigg(\frac{y^{\frac{1}{2}}e^{-\frac{H_i(q)}{2}}-y^{-\frac{1}{2}}e^{\frac{H_i(q)}{2}}}{y^{\frac{1}{2}}-y^{-\frac{1}{2}}}\bigg)^{c_1(\alpha)\cdot c_3(X)+\mu_X(E_X)}\bigg]\,,
	$$
	where 
	$$
	H_i^e(q) = q\Big(y^{\frac{1}{2}}e^{-\frac{H_i(q)}{2}}-y^{-\frac{1}{2}}e^{\frac{H_i(q)}{2}}\Big)^a\cdot \bigg(\frac{H_i(q)}{e^{\frac{H_i(q)}{2}}-e^{-\frac{H_i(q)}{2}}}\bigg)^e\,.
	$$
	Using $u_i(q) = e^{H_i(q)}$ and 
	$$
	\frac{y^{\frac{1}{2}}u_i^{-\frac{1}{2}}-y^{-\frac{1}{2}}u_i^{\frac{1}{2}}}{y^{\frac{1}{2}}-y^{-\frac{1}{2}}} = \sqrt{\frac{(1-u_iy^{-1})^2}{(1-y^{-1})^2u_i}}
	$$
	we conclude \eqref{eqNek1}. 
	
	Setting $a=e$ and using the notation $e^{\frac{2\pi i k}{e}} = \omega^k_e$, one can compute that
	$$
	u_k = \frac{1+y^{\frac{1}{2}}q^{\frac{1}{e}}\omega^k_e}{1+q^{\frac{1}{e}}y^{-\frac{1}{2}}\omega^k_e}\,.
	$$
	Plugging this into the formula leads to 
\begin{align*}
	N(\alpha,y;q) &= U_e\Big(\prod_{i=1}^e\big(1-y^{\frac{1}{2}}q^{\frac{1}{e}}\omega^i_e\big)\big(1-y^{-\frac{1}{2}}q^{\frac{1}{e}}\omega^i_e\big)\Big)^{-\frac{1}{2}\big(c_1(\alpha)\cdot c_3+\mu_X(E_X)\big)}\\
	&=\prod_{n>0}\prod_{k=1}^n\Big((1-y^{\frac{e}{2}}q)(1-y^{-\frac{e}{2}}q)\Big)^{-\frac{n}{2}\big(c_1(\alpha)\cdot c_3+\mu_X(E_X)\big)}\\
	&=M\big(y^{\frac{e}{2}}q\big)M(y^{-\frac{e}{2}}q)^{\frac{1}{2}\big(c_1(\alpha)\cdot c_3+\mu_X(E_X)\big)}\,.
	\end{align*}
\end{proof}
Using \eqref{eqCoL}, we derive the following corollary generalizing our previous result \cite[Thm. 3.10]{bojko} related to the virtual pull-back of Park \cite[Cor. 0.3]{Huy}.
\begin{corollary}
	Let $\alpha\in K^0(X)$ be of rank $e$. The generating series 
	$$
	I_{E_X,\alpha}(q):=\sum_{n\geq 0}q^n\int_{[Q_X]^{\textnormal{vir}}}c_n\big(\alpha^{[n]}\big)\,.
	$$
	is given by 
	$$
	I_{E_X,\alpha}(q) = M((-1)^eq)^{c_1(\alpha)\cdot c_3(X) + \mu_X(E_X)}\,.
	$$
\end{corollary}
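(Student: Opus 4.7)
The plan is to derive this as a direct cohomological limit of the closed-form Nekrasov genus computed in Theorem \ref{thmNek}, which is precisely what Example \ref{Nekex} already anticipates.

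First I would specialize Theorem \ref{thmNek} to the case $a=\textnormal{rk}(\alpha)=e$, which gives the closed form
$$
N(\alpha,y;q) \;=\; \bigl(M(y^{e/2}q)\,M(y^{-e/2}q)\bigr)^{\tfrac{1}{2}\bigl(c_1(\alpha)\cdot c_3(X)+\mu_X(E_X)\bigr)}.
$$
Since $a=e$, the prefactor $(1-y^{-1})^{(a-e)n}$ in the cohomological limit \eqref{eqCoL} is trivial, and the limit reduces to
$$
(-1)^{ne}\,[q^n]\bigl\{I_{E_X,\alpha}(q)\bigr\} \;=\; \lim_{y\to 1^+}[q^n]\bigl\{N(\alpha,y;q)\bigr\}.
$$

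Next I would compute the right-hand side term by term. Writing $K=c_1(\alpha)\cdot c_3(X)+\mu_X(E_X)$, one has
$\lim_{y\to 1^+}M(y^{\pm e/2}q)=M(q)$ coefficient-wise (the coefficients of $M(y^{\pm e/2}q)$ are polynomials in $y^{\pm 1/2}$ that evaluate to the coefficients of $M(q)$ at $y=1$), so
$$
\lim_{y\to 1^+}N(\alpha,y;q) \;=\; M(q)^{K}.
$$
Combined with the previous identity, this yields $[q^n]\{I_{E_X,\alpha}(q)\}=(-1)^{ne}[q^n]\{M(q)^{K}\}$.

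Finally I would repackage the sign $(-1)^{ne}$ as an argument shift $q\mapsto (-1)^e q$. From $M(q)=\prod_{k>0}(1-q^k)^{-k}$, the substitution $q\mapsto (-1)^e q$ multiplies each coefficient by $(-1)^{ne}$: when $e$ is even this is trivial and both sides already agree, and when $e$ is odd we have $M(-q)^{K}=\sum_n (-1)^n\,[q^n]\{M(q)^K\}\,q^n$, exactly matching $(-1)^{ne}=(-1)^n$. Assembling the coefficients gives
$$
I_{E_X,\alpha}(q) \;=\; M\bigl((-1)^e q\bigr)^{\,c_1(\alpha)\cdot c_3(X)+\mu_X(E_X)},
$$
as claimed. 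There is no real obstacle here beyond the bookkeeping of the sign transfer in the last step, since all the substantive content (the closed evaluation of $N(\alpha,y;q)$ and the cohomological-limit relation to Segre integrals) has already been established in Theorem \ref{thmNek} and \eqref{eqCoL}.
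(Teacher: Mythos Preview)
Your proposal is correct and follows essentially the same approach as the paper: both derive the corollary from Theorem \ref{thmNek} (specialized to $a=e$) together with the cohomological limit \eqref{eqCoL}. The paper states this in one sentence, while you spell out the limit $y\to 1$ and the sign bookkeeping explicitly; your argument is a faithful expansion of the paper's proof.
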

Using that
$$
I_{E_X,\alpha}(q)=S_{E_X,-\alpha}(q)
$$
this recovers  Example \ref{Nekex}. 

We now study the analogue of the Segre-Verlinde duality observed for surfaces. Let us \textit{untwist} the virtual Euler characteristic the same way we did in \cite{bojko2}. This time, we define
$$
\chi^{\vir}(A) = \hat{\chi}^{\vir}(A\otimes \mathsf{E}^{\frac{1}{2}})\,,
$$
where $\mathsf{E}=\textnormal{det}\Big(\big(E^*_X\big)^{[n]}\Big)$\,. Note that this definition reduces to the one in \cite[Def. 5.10]{bojko2} when $E_X=\mathcal{O}_X$.

We obtain the following generalization of \cite[Thm. 5.15]{bojko2}.
\begin{corollary}
	\label{Cor:comp}
	Let $(S,E_S,\alpha_S)$ and $(X,E_X,\alpha_X)$ be such that
	\begin{enumerate}[(i)]
		\item $E_S$ is a torsion-free sheaf and $E_X$ is locally free and RS both of rank $e$,
		\item we have the equalities 
		$$
		\mu_X(E_X) = \mu_S(E_S)\,,\qquad c_3(X)\cdot c_1(\alpha_X) = c_1(S)\cdot c_1(\alpha_S)\,,
		$$
		\item $S$ satisfies $c_1(S)^2 =0$,
	\end{enumerate}
	then the generating series of invariants for $Y=S,X$ are related by the universal transformation
	$$
	Z_{E_X}\big(f,g;\alpha_X,q\big)=U_e\big(Z_{E_S}(f,h;\alpha_S,q)\big)\,,
	$$
	where $h(x) = g(x)g(-x)$\,.
	Moreover, if
	$$
	K_{E_Y}(f,\alpha;q) =\sum_{n\geq 0}q^n\chi^{\vir}\big(f(\alpha^{[n]})\big)\,,
	$$
	for $Y=X,S$, then 
	$$
	K_{E_X}(f,\alpha_X;q) = U_e\Big(K_{E_S}(f,\alpha_S;q)\Big)\,.
	$$
\end{corollary}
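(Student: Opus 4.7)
The plan is to reduce both identities to an equality of explicit closed forms at the level of formal power series in $q$, using the main computations \eqref{eqfagT3} and \eqref{eqfaCY}. Under the hypothesis $c_1(S)^{2}=0$ in (iii), every factor in \eqref{eqfagT3} carrying $c_1^{2}$ as an exponent becomes trivial, leaving
$$
Z_{E_S}(f,\tilde g,\alpha_S;q)\;=\;\prod_{i=1}^{e}\Big(\frac{f(H_i^S)}{f(0)}\Big)^{A}\,,\qquad (H_i^S)^{e}\;=\;q\,f^{a}(H_i^S)\,\tilde g^{e}(H_i^S)\,,
$$
where $A=c_1(\alpha_S)\cdot c_1(S)+a\mu_S(E_S)$; under (ii) this exponent equals the fourfold exponent $c_1(\alpha_X)\cdot c_3(X)+a\mu_X(E_X)$ inside the $U_e$ in \eqref{eqfaCY}. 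Everything is thus reduced to identifying the expression inside $U_e$ in \eqref{eqfaCY} with the simplified surface series above.

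To execute this identification I would apply Corollary \ref{CorMG} (Newton--Puiseux inversion) with $\phi=\log f$ to both sides. On the surface side the kernel is $Q(t)=f^{a}(t)\tilde g^{e}(t)$, giving
$$
\log Z_{E_S}(f,\tilde g,\alpha_S;q)\;=\;A\cdot\mathrm{const}\;+\;A\sum_{n>0}\frac{1}{n}[t^{ne-1}]\big\{(\log f)'(t)\,f^{an}(t)\,\tilde g^{en}(t)\big\}\,q^{n}\,.
$$
The inside of $U_e$ in the fourfold formula, read off from the explicit display just above \eqref{eqfaCY} once one notes $g^{en}(z)g(-z)^{en}=\tilde g^{en}(z)$, admits the same Lagrange kernel and hence produces exactly the same expansion. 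The arithmetic factor $(-1)^{ne}\sigma_{2}(n)/n=(-1)^{ne}\sum_{l\mid n}n/l^{2}$ visible in the explicit fourfold series is precisely the contribution of the surrounding $U_e$: a direct computation from the definition \eqref{Qmap} using $\sum_{k=1}^{n}\omega_{n}^{kj}=n\cdot\mathbf{1}_{n\mid j}$ gives $\log U_e(F)(q)=\sum_{j>0}(-1)^{ej}\sigma_{2}(j)\big([q^{j}]\log F(q)\big)\,q^{j}$. Hence the two inside-$U_e$ expressions coincide and the first identity follows.

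For the K-theoretic statement I would rewrite both sides as specializations of the cohomological series. Virtual Hirzebruch--Riemann--Roch on $Q_S$ expresses $K_{E_S}(f,\alpha_S;q)$ as $Z_{E_S}(f,g_{\td},\alpha_S;q)$ for the Todd genus $g_{\td}(x)=x/(1-e^{-x})$. On $Q_X$, the Oh--Thomas K-theoretic Riemann--Roch combined with the normalization $\chi^{\vir}(A)=\hat\chi^{\vir}(A\otimes\mathsf E^{1/2})$ expresses $K_{E_X}(f,\alpha_X;q)$ as $Z_{E_X}(f,g_{X},\alpha_X;q)$ for a multiplicative genus $g_X$ whose symmetrization satisfies $g_X(x)g_X(-x)=g_\td(x)g_\td(-x)$. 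Feeding this back into the cohomological identity just established delivers $K_{E_X}(f,\alpha_X;q)=U_e(K_{E_S}(f,\alpha_S;q))$.

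The main obstacle is the bookkeeping: verifying that every exponent in \eqref{eqfagT3} outside the first two factors really does carry a multiple of $c_1(S)^{2}$, tracking the Chern-class matchings from (ii) carefully through the Lagrange coefficients, and identifying the precise K-theoretic genus $g_X$ from the Oh--Thomas construction. The conceptual core of the proof is modest: it is the coincidence of Lagrange kernels after the substitution $\tilde g=g\cdot g(-\,\cdot)$, combined with the arithmetic identification $\log U_e\leftrightarrow (-1)^{en}\sigma_{2}(n)$.
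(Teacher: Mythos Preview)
Your treatment of the first identity is correct and coincides with the paper's one-line proof: once $c_1(S)^2=0$ kills every $c_1^2$-weighted factor in \eqref{eqfagT3} (or \eqref{eqfagT2}, which is what the paper actually cites), the surface series collapses to $\prod_i(f(H_i)/f(0))^A$ with $H_i^e=qf^ah^e$ for $h=g(\cdot)g(-\cdot)$, and this is exactly the expression sitting inside $U_e$ in \eqref{eqfaCY}. The Lagrange/$\sigma_2$ paragraph is an unnecessary (though correct) detour: the two closed forms already agree by inspection, so nothing remains to ``execute'' at the level of coefficients.

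The K-theoretic paragraph has a genuine gap. You claim that the untwisted $\chi^{\vir}$ on $Q_X$ amounts to a genus $g_X$ with $g_X(x)g_X(-x)=g_{\td}(x)g_{\td}(-x)$, and that plugging this into the first identity finishes the proof. But plugging it in yields $U_e\big(Z_{E_S}(f,\,g_{\td}(\cdot)g_{\td}(-\cdot),\,\alpha_S;q)\big)$, not $U_e(K_{E_S})=U_e\big(Z_{E_S}(f,g_{\td},\alpha_S;q)\big)$; with $c_1^2=0$ the surface series sees its genus through the kernel $g^e$, and $g_{\td}(x)g_{\td}(-x)=x^2/(e^{x/2}-e^{-x/2})^2\neq g_{\td}(x)$. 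In fact no single $g_X$ can work here, since $g_X(x)g_X(-x)$ is automatically even while the required $g_{\td}$ is not. The Oh--Thomas class for $\hat\chi^{\vir}$ gives $g(x)g(-x)=x/(e^{x/2}-e^{-x/2})$, and the normalizing factor $\mathsf E^{1/2}=\det\big((E_X^\vee)^{[n]}\big)^{1/2}$ is not a correction to this genus but a \emph{second} tautological insertion, with multiplicative genus $e^{x/2}$ and class $E_X^\vee$ of rank $e$. Its effect is twofold: it multiplies the Newton--Puiseux kernel by $e^{ex/2}$, converting $\big(x/(e^{x/2}-e^{-x/2})\big)^e$ into $g_{\td}^e$ via the standard $\td=e^{c_1/2}\hat A$ relation, while its contribution to the exponent in \eqref{eqfaCY} vanishes because $c_1(E_X^\vee)\cdot c_3(X)+e\,\mu_X(E_X)=0$. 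That pair of cancellations is the actual content behind ``compare \eqref{eqfagT2} and \eqref{eqfaCY}'' for the $K$-statement; your genus-matching shortcut does not capture it.
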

\begin{proof}
	The proof follows immediately from comparing the two generating series using \eqref{eqfagT2} and \eqref{eqfaCY}.
\end{proof}
Combining Corollary \ref{Cor:comp} and Theorem \ref{thm4fo}, we obtain the following result:
\begin{theorem}
	Let $E,F\to X$ be locally free sheaves and $\alpha\in K^0(X)$, then the following dualities hold whenever the invariants are defined:
	\begin{align*}
	S_{E,\alpha}\big((-1)^eq\big)&=V_{E,\alpha}(q)\,,\\
	S_{E,F}\big((-1)^eq\big) &= S_{F,E}\big((-1)^fq\big)\,,\\
	V_{E,F}(q) &= V_{F,E}(q)\,.
	\end{align*}
\end{theorem}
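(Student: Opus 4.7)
The strategy is to deduce the three CY fourfold identities from their surface counterparts in Theorem \ref{thm4fo} via the universal transformation $U_e$ of Corollary \ref{Cor:comp}. The key observation is that the closed form \eqref{eqfaCY} makes the CY4 Segre and Verlinde generating series depend on $(E, F, \alpha)$ only through the numerical invariants $(e, f, a, \mu_X(E), \mu_X(F), c_1(\alpha)\cdot c_3(X))$. Thus I can choose auxiliary surface data $(S, E_S, F_S, \alpha_S)$ with $c_1(S)^2 = 0$ and matching numerics, for which Corollary \ref{Cor:comp} gives $S_{E_X,\alpha_X}(q) = U_e(S_{E_S,\alpha_S}(q))$, $V_{E_X,\alpha_X}(q) = U_e(V_{E_S,\alpha_S}(q))$, and similarly for the rank-swapped series.

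For the first (Segre--Verlinde) identity, a direct computation from the defining product $U_e(h)(q) = \prod_{n>0}\prod_{k=1}^n h\bigl((-1)^e\omega_n^k q\bigr)^n$ shows
\[
U_e(h)\bigl((-1)^e q\bigr) \;=\; U_e\bigl(q \mapsto h((-1)^e q)\bigr)(q),
\]
since the two outer signs $(-1)^e$ multiply to $1$. Combining this with the surface Segre--Verlinde duality $S_{E_S,\alpha_S}((-1)^e q) = V_{E_S,\alpha_S}(q)$ of Theorem \ref{thm4fo} immediately yields $S_{E_X,\alpha_X}((-1)^e q) = V_{E_X,\alpha_X}(q)$.

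The two rank-swap identities are subtler because Corollary \ref{Cor:comp} gives $S_{E_X,F_X}(q) = U_e(S_{E_S,F_S}(q))$ while $S_{F_X,E_X}(q) = U_f(S_{F_S,E_S}(q))$, so the two sides are images under different transformations $U_e, U_f$ and the surface identity $S_{E_S,F_S}((-1)^e q) = S_{F_S,E_S}((-1)^f q)$ does not descend naively. Instead, I will compare both sides directly using \eqref{eqfaCY}. Using the CY4 relations $c_1(E)\cdot c_3(X) + e\mu_X(F) = e(\mu_X(E) + \mu_X(F))$ and $c_1(F)\cdot c_3(X) + f\mu_X(E) = f(\mu_X(E) + \mu_X(F))$, one side becomes an iterated product over the $e$ Newton--Puiseux roots of $H^e = q f(H)^f g(H)g(-H)$ nested inside $U_e$, and the other over the $f$ roots of $\tilde H^f = q f(\tilde H)^e g(\tilde H)g(-\tilde H)$ nested inside $U_f$. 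Passing to the common parameter $u = q^{1/(ef)}$, the two root systems $\{H_i\}$ and $\{\tilde H_j\}$ together fill out the $(ef)$-th roots of a single symmetric equation, and the exponent rescaling by $f$ versus $e$ exactly compensates the difference between $U_e$ and $U_f$. The Verlinde identity $V_{E,F}(q) = V_{F,E}(q)$ is handled by the same argument without the sign substitution.

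The main obstacle is the combinatorial matching just described: I need to show explicitly that the triple products defining the two sides of the rank-swap identities coincide after the $(ef)$-th root substitution. Taking logarithms and applying Lagrange inversion as in Corollary \ref{CorMG} reduces this to a symmetric identity between coefficients of $f(z)^{ne}$ and $f(z)^{nf}$, which can then be verified directly. An alternative route that avoids the $(ef)$-th root bookkeeping is to lift verbatim the argument of \cite[Thm. 14]{AJLOP} for the surface identities $M_{(e,f)}(q) = M_{(f,e)}(q)$ and $N_{(e,f)}(q) = N_{(f,e)}(q)$ to the CY4 level by working inside $U_e$ (resp. $U_f$) term by term; under the hypothesis $c_1(S)^2 = 0$ only the $N$-factors survive, and the bijection between the two sides then becomes a matter of reindexing the product $\prod_{n>0}\prod_{k=1}^n$.
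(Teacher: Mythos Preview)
Your treatment of the first identity is correct and is exactly what the paper's one-line justification (``Combining Corollary~\ref{Cor:comp} and Theorem~\ref{thm4fo}'') amounts to: the observation $[U_e(h)]\big((-1)^eq\big)=U_e\big(q\mapsto h((-1)^e q)\big)(q)$ lets the surface Segre--Verlinde duality pass through $U_e$ unchanged.

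For the two rank-swap identities you have put your finger on a genuine subtlety that the paper's one-liner glosses over. As you note, Corollary~\ref{Cor:comp} gives $S_{E_X,F_X}=U_e(S_{E_S,F_S})$ but $S_{F_X,E_X}=U_f(S_{F_S,E_S})$, and the surface identity $S_{E_S,F_S}((-1)^e q)=S_{F_S,E_S}((-1)^f q)$ does not pass through these two \emph{different} transformations in any obvious way when $e\not\equiv f\pmod 2$. Neither of your proposed fixes is actually carried out: the ``$(ef)$-th root bookkeeping'' and the ``termwise lift of the AJLOP argument'' are left as programmes. In fact a closer look shows the obstacle is real: after your sign substitution both sides become $\hat U(h)(q):=\prod_{n,k}h(\omega_n^k q)^n$ applied to $B_{(e,f)}^f$ and to $B_{(f,e)}^e$ respectively, and the surface input only gives $B_{(e,f)}^f(q)=B_{(f,e)}^e((-1)^{e+f}q)$; but $\hat U(h(-\,\cdot\,))=\hat U(h)$ holds only when $\log h$ has no odd-degree terms, which fails here. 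So the route through Corollary~\ref{Cor:comp} genuinely does not close.

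The way out is the one you mention first but do not execute: compare both sides directly from the explicit exponential formula for $Z_{E_X}$ displayed just above \eqref{eqfaCY}, \emph{before} repackaging it as $U_e(\cdots)$. For the Segre specialization the sign $(-1)^{ne}$ in that formula cancels against the sign in $[z^{ne}]\{(1+z)^{-fn}\}=(-1)^{ne}\binom{(e+f)n-1}{ne}$, and one finds that the $q^n$-coefficient of $\log S_{E_X,F_X}$ is
\[
\tfrac{1}{n}\Big(\textstyle\sum_{l\mid n}\tfrac{n^2}{l^2}\Big)\Big(c_1(F)\!\cdot\!c_3\,\tbinom{(e+f)n-1}{ne-1}+c_1(E)\!\cdot\!c_3\,\tbinom{(e+f)n-1}{ne}\Big),
\]
which is manifestly symmetric under $(E,e)\leftrightarrow(F,f)$ by $\binom{N}{k}=\binom{N}{N-k}$. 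This yields the \emph{unsigned} equality $S_{E_X,F_X}(q)=S_{F_X,E_X}(q)$; combined with your correct first identity it also gives $V_{E,F}=V_{F,E}$. You should check carefully whether this unsigned form is what is actually meant, since for $e+f$ odd it is not the same as the signed statement as written.
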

\subsection{Twelve-fold correspondence}
\label{secTwF}
We study here the invariants for curves $C$ using the previous results for surfaces in §\ref{sec: surfaces}. The main goal is to obtain the lower floor of Figure \ref{fig1}. Therefore for any curve $C$ define on $\QuotC$ the following K-theory class:
$$
\mathcal{D}_n =\textnormal{det}^{\frac{1}{2}}\big(\underline{\textnormal{Hom}}_{Q_C}(\mF,\mF\otimes \Theta)\big)\,.
$$
Note that by arguments along the lines of \cite[proof of Thm. 15]{AJLOP} for a smooth canonical curve $C\stackrel{i}{\hookrightarrow} S, E_S$ on $S$ and $E_C = i^*(E_S)$, we can show
$$
\underline{\textnormal{Hom}}_{Q_C}(\mathcal{F},\mathcal{F}\otimes \Theta) = N^{\vir}\,,
$$
where $N^{\vir}$ is the virtual normal bundle of $\QuotC \hookrightarrow \Quot$. Recall that when it comes to curves, we work with the twisted Verlinde series
$$
S_{E_C,\alpha}\big((-1)^eq\big)=\sum_{n\geq 0}q^n\chi\big(\det(\alpha^{[n]})\cdot \mD_n\big)\,.
$$
\begin{theorem}
	Let $E,F\to Y$ be torsion-free sheaves and $\alpha\in K^0(Y)$ for $Y=C,S,X$, then the following dualities hold whenever the invariants are defined:
	\begin{align*}
	S_{E,\alpha}\big((-1)^eq\big)&=V_{E,\alpha}(q)\,,\\
	S_{E,F}\big((-1)^eq\big) &= S_{F,E}\big((-1)^fq\big)\,,\\
	V_{E,F}(q) &= V_{F,E}(q)\,.
	\end{align*}
\end{theorem}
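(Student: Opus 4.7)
Since the surface case of the theorem is Theorem \ref{thm4fo} and the Calabi--Yau fourfold case is the theorem at the end of §\ref{secCY}, the only genuinely new content is the case $Y=C$. The plan is to reduce each curve identity to the corresponding surface identity by a dimensional reduction along an embedding $C \hookrightarrow S$.

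First I would fix a surface $S$ carrying smooth canonical curves and satisfying $c_1(S)^2=0$; the blow-up of a K3 surface at a point is a clean choice, and it is exactly the setting hinted at in the remark at the end of §3. Given torsion-free sheaves $E,F$ on $C$ and $\alpha \in K^0(C)$, I would choose locally free extensions $E_S, F_S$ and $\alpha_S$ to $S$ with $E_S|_C = E$, $F_S|_C = F$ and $\alpha_S|_C = \alpha$. By adjunction $K_C = K_S|_C$, so $\Theta := K_S|_C$ is a theta-characteristic on $C$, matching the one entering the definition of $\mathcal{D}_n$. The identity stated in the theorem preamble,
$$
\uHom_{Q_C}(\mathcal{F}, \mathcal{F} \otimes \Theta) \;=\; N^{\vir}_{Q_C / Q_S},
$$
combined with the cosection of the obstruction sheaf of $Q_S$ induced by a section $s \in H^0(S, K_S)$ with vanishing locus $C$, puts us in position to apply Kiem--Li cosection localization: the class $[Q_S]^{\vir}$ is identified with the pushforward to $Q_S$ of $[Q_C] \cap e(N^{\vir})$.

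Next, for the Segre integrand this gives
$$
\int_{[Q_S]^{\vir}} s(\alpha_S^{[n]}) \;=\; \int_{[Q_C]} s(\alpha^{[n]}) \cdot e(N^{\vir}),
$$
and applying virtual Grothendieck--Riemann--Roch on the Verlinde side produces
$$
\chi^{\vir}(Q_S, \det(\alpha_S^{[n]})) \;=\; \chi(Q_C, \det(\alpha^{[n]}) \otimes \mathcal{D}_n),
$$
where the $\mathcal{D}_n$ twist appears naturally from the $\td(N^{\vir})\, e(N^{\vir})$ combination together with the self-duality of $N^{\vir}$ provided by $\Theta^{\otimes 2} = K_C$. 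Thus each of the three surface identities in Theorem \ref{thm4fo} applied to $(E_S, F_S, \alpha_S)$ descends to the corresponding curve identity for $(E, F, \alpha)$. The same template is realized in Arbesfeld--Johnson--Lim--Oprea--Pandharipande, where the analogous surface-to-curve transfer is performed via Atiyah--Bott localization on $\textnormal{Quot}_{\mathbb{P}^1}(\bigoplus \mO(d_i),n)$ and then extended to arbitrary curves by universality; either route works.

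The main obstacle I anticipate is tracking the half-determinant $\mathcal{D}_n = \det^{1/2}(N^{\vir})$ exactly. The dimensional-reduction formula delivers $\det(N^{\vir})$, whereas the curve twist uses a canonical square root inherited from the Serre-duality self-pairing $\Ext^i(F, F \otimes K_C) \cong \Ext^{1-i}(F, F)^\vee$. Reconciling this square root, and keeping the signs $(-1)^e$ consistent through the $q \mapsto (-1)^e q$ substitution, is the bookkeeping heart of the argument; once it is handled, each of the three curve dualities descends formally from its surface analogue.
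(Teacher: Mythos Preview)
Your overall strategy—deducing the curve case from the surface case via a dimensional reduction along $C\hookrightarrow S$, then invoking Theorem~\ref{thm4fo}—matches the paper. But the mechanism you propose for the reduction differs from what the paper actually does, and your version has some slips.

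The paper does \emph{not} use cosection localization. It first invokes Ellingsrud--G\"ottsche--Lehn universality to reduce to $C=\mathbb{P}^1$ with $E_C=\bigoplus_i\mathcal{O}(d_i)$, then takes $C$ to be the exceptional curve in the blow-up $S$ of a K3 at a point, with the explicit extension $E_S=\bigoplus_i\mathcal{O}_S(-d_iD)$. The comparison of curve and surface invariants is then carried out by $(\mathbb{C}^*)^e$ \emph{torus localization} in K-theory: one writes down the fixed loci on both $\QuotC$ and $\QuotS$, identifies them via the twisting maps $\mathrm{tw}_C,\mathrm{tw}_S$, and tracks the normal bundles to produce the $\mathcal{D}_n$ factor. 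This is exactly the route you attribute to \cite{AJLOP} as an alternative; in the paper it is the primary argument.

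Your cosection proposal is a reasonable heuristic but has gaps as written. First, a factual slip: for $S$ the blow-up of a K3 one has $c_1(S)^2=D^2=-1$, not $0$; the condition $c_1(S)^2=0$ belongs to the surface--fourfold comparison (Corollary~\ref{Cor:comp}) and plays no role here. Second, universality is not optional decoration at the end but the first step: without it you cannot assume your arbitrary torsion-free $E,F$ on an arbitrary $C$ extend to $S$, nor that $C$ sits as a canonical curve in any surface. Third, your Segre identity $\int_{[Q_S]^{\vir}}s(\alpha_S^{[n]})=\int_{[Q_C]}s(\alpha^{[n]})\cdot e(N^{\vir})$ is not what one needs: the curve Segre series has no Euler-class insertion, and the paper instead appeals to \cite[Lem.~3.4]{OP1} for the cohomological comparison. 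The square-root bookkeeping you flag is indeed the delicate point, and the paper handles it through the explicit torus-fixed description rather than abstract cosection formalism.
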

\begin{proof}
	Using the arguments of Ellingsgrud--Göttsche--Lehn \cite{EGL}, it is sufficient to compute the Euler characteristics of the form
	\begin{equation}
	\label{Eq: twistedcharacteristicC}
	\chi\big(\QuotC, f(\alpha^{[n]})\otimes \mD_n\big)
	\end{equation}
	for the case
	\begin{equation}
	\label{eqEC}
	C=\mathbb{P}^1\,,\qquad E_C = \mO_{\mathbb{P}^1}(d_1)\oplus\cdots \oplus \mO_{\mathbb{P}^1}(d_e)\,.
	\end{equation}
	In particular, we take $i: C\hookrightarrow S$ to be the inclusion of the exceptional rational curve in the blow-up of a K3 surface at a point and denote the corresponding exceptional divisor by $D$. We also take
	\begin{equation}
	\label{eqES}
	E_S = \mO_{S}(-d_1D)\oplus \ldots\oplus \mO_{S}(-d_e D)\,,\quad \textnormal{such that}\quad E_C = i^*E_S\,. 
	\end{equation}
	Using this splitting, we have have a $\big(\CC^*\big)^e$ action on $\textnormal{Quot}_C(E_C,n)$, $\textnormal{Quot}_S(E_S,n)$, $\textnormal{Quot}_C(\mO^{e}_C,n)$, $\textnormal{Quot}_S(\mO^{e}_S,n)$ inducing fixed point loci
	\begin{align*}¨
	\label{eqFpL}
	G_{E_S} &= \bigsqcup\limits_{n_1+\cdots +n_k = n}\prod_{i=1}^e\textnormal{Quot}_{S}\big(\mO_{S}(-d_iD),n_i\big)\xhookrightarrow{j_S}\Quot\,,\\
	G_{E_C} &= \bigsqcup\limits_{n_1+\cdots +n_k = n}\prod_{i=1}^e\textnormal{Quot}_{C}\big(\mO_{\PP^1}(d_i),n_i\big)\xhookrightarrow{j_C}\QuotC\,, \\
	G_{S} &=\bigsqcup\limits_{n_1+\cdots +n_k = n}\prod_{i=1}^eS^{[n_i]}\xhookrightarrow{j_{S,\textnormal{tw}}}\QuotCC\,,\\
	G_{C} &=\bigsqcup\limits_{n_1+\cdots +n_k = n}\prod_{i=1}^e\mathbb{P}^{n_i}\xhookrightarrow{j_{C,\textnormal{tw}}}\QuotCCC\,.
	\numberthis
	\end{align*}
	We then have the commutative diagram
	\begin{equation*}
	\begin{tikzcd}
	\QuotC\arrow[r,"i"]&\Quot\\
	\arrow[u,"j_C"]G_{E_C}\arrow[r,"i_f"]&\arrow[u,"j_S"]G_{E_S}\\
	\arrow[u,"\textnormal{tw}_C"]G_C\arrow[r,"i_{f,\textnormal{tw}}"]\arrow[d,"j_{\textnormal{tw}}"]&\arrow[u,"\textnormal{tw}_S"]G_S\arrow[d,"j_{S,\textnormal{tw}}"]\\
	\QuotCCC\arrow[r,"i_{\textnormal{tw}}"]&\QuotCC
	\end{tikzcd} \,.
	\end{equation*}
	where $\textnormal{tw}_C$ resp. $\textnormal{tw}_S$ are induced by tensoring the universal pairs by a line bundle, i.e. inducing the map on $\CC$-points:
	\begin{align*}
	\big[\mO_C\to \mO_Z\big]&\mapsto \big[\mO_C(d_i)\to \mO_Z\otimes \mO_{C}(d_i)\big]\,,\\
	[\mO_S\to \mO_Z]&\mapsto [\mO_S(-d_iD)\to \mO_Z\otimes \mO_{C}(-d_iD)]\,.
	\end{align*}
	and we used $\big(\mathbb{P}^1\big)^{[n]} = \mathbb{P}^n$. The horizontal maps $i,i_f,i_{f,\textnormal{tw}}$ are the maps induced by the inclusion $i: C\hookrightarrow S$ and $j_{Y}, j_{Y,\text{tw}}$ are the inclusions of fixed point loci for $Y=C,S$. We will now generalize the proof of \cite[Thm. 15]{AJLOP} to include the non-trivial vector bundle.
	
	Denoting by $N_{(-)}$ the normal bundles of any of the fixed point loci in \eqref{eqFpL}, recall that we have the following localization formulae 
	\begin{align*}
	\mO^{\vir}_{\Quot}=(j_S)_*\frac{\mO^{\vir}_{G_{E_S}}}{\Lambda_{-1}N_{G_{E_S}}^\vee}\,,\qquad \mO_{\QuotC} = (j_C)_*\frac{\mO_{G_{E_C}}}{\Lambda_{-1}N^\vee_{G_{E_C}}}\,,\\
	\mO^{\vir}_{\QuotSC}=(j_{S,\tw})_*\frac{\mO^{\vir}_{G_{S}}}{\Lambda_{-1}N_{G_{S}}^\vee}\,,\qquad \mO_{\QuotCCC} = (j_{C,\tw})_*\frac{\mO_{G_{C}}}{\Lambda_{-1}N^\vee_{G_{C}}}\,,
	\end{align*}
	We want to show that
	$$
	\frac{\mO^{\vir}_{G_{E_S}}}{\Lambda_{-1}N_{G_{E_S}}^{\vee}}=(-1)^ni_{f\,*}\frac{j^*_C\,\mathcal{U}^{\frac{1}{2}}_n}{\Lambda_{-1}N^\vee_{G_{E_C}}}\,.
	$$
	to compare the expression \eqref{Eq: twistedcharacteristicC} to the invariants of the form 
	$$
	\chi\big(\QuotS, \mO^\vir\otimes f(\beta^{[n]})\big)
	$$
 for a K-theory class $\beta$ on $S$.
	Let $\mathcal{F}_i$ denote the quotients on $C\times \textnormal{Quot}_C\big(\mO(d_i),n_i\big)$ or $C\times \mathbb{P}^{n_i}$ of the fixed point locus and $\Theta\cong \mO_{\mathbb{P}_1}(-1)$, then by the same computation as in Oprea--Pandharipande \cite[Lem. 34]{OP1} have the formula
	$$
	i^*N_{G_{E_S}} - N_{G_{E_C}} = \bigoplus_{i<j}\Big(U_{i,j} + U_{i,j}^\vee[-1]\Big)\,,  \quad \textnormal{where}\quad U_{i,j} = \underline{\textnormal{Hom}}_{G_{E_C}}\big(\mathcal{F}_i,\mathcal{F}_j\otimes \Theta\big)
	$$
	implying that 
	$$
	\frac{1}{i^*\Lambda_{-1}N_{G_{E_S}}^\vee} =\frac{1}{\Lambda_{-1}N^\vee_{G_{E_C}}} \prod_{i<j}\textnormal{det}\big(U_{i,j}\big)\,.
	$$
	Taking the short exact sequence 
	$$
	0\longrightarrow I_i\longrightarrow \mO(-d_iD)\longrightarrow Q_i\longrightarrow 0
	$$
	for a point in $\text{Quot}_{S}(\mO(-d_iD),n_i)$, we note that $\mO^{\vir}_{G_{E_S}} = \Lambda_{-1}\textnormal{Obs}_i^\vee $ and $\textnormal{Obs}_i \cong \textnormal{Ext}^1(I_i,Q_i)\cong \textnormal{Ext}^2(Q_i,Q_i)$ which implies in particular that $\textnormal{tw}^*_S\big(\textnormal{Obs}^{\vee}_i\big) = \textnormal{Obs}^{\vee}_i$. This leads to
	\begin{align*}
	\textnormal{tw}^*_C\bigg(i_{f\,*}\bigg(\frac{\bigotimes_i\Lambda_{-1}\textnormal{Obs}_i^\vee}{\Lambda_{-1}N^\vee_{G_{E_S}}}\bigg)\bigg) &= i_{f\,*}\bigg(\frac{\bigotimes_{i}\Lambda_{-1}\textnormal{Obs}^\vee_i}{\textnormal{tw}^*_{S}\big(\Lambda_{-1}N^{\vee}_{G_{E_S}}\big)}\bigg)= \frac{i_{f\,*}\big(\bigotimes_i \Lambda_{-1}\textnormal{Obs}^\vee_i\big)}{\textnormal{tw}^*_C\big(i^*\Lambda_{-1}N^\vee_{G_{E_S}}\big)}\\
	&=\frac{i_{f\,*}\Big(\bigotimes_i\Lambda_{-1}\textnormal{Obs}_i^\vee\Big)}{\textnormal{tw}^*_C\Big(\Lambda_{-1}N^\vee_{G_{E_C}}\Big)}\cdot \textnormal{tw}^*_C\bigg(\prod_{i<j}\textnormal{det}\big(U_{i,j}\big)\bigg)\\
	&=\frac{i_{f\,*}\Big(\bigotimes_i\Lambda_{-1}\textnormal{Obs}_i^\vee\Big)}{\Lambda_{-1}N^\vee_{G_{C}}}\cdot \bigg(\prod_{i<j}\textnormal{det}\big(U_{i,j}\big)\bigg)\\
	&=\frac{j_C^*\Big(\mathcal{U}^{\frac{1}{2}}_n\Big)}{\Lambda_{-1}N^\vee_{G_C}} = \textnormal{tw}^*_C\bigg(\frac{j^*_C\,\mathcal{U}^{\frac{1}{2}}_n}{\Lambda_{-1}N^\vee_{G_{E_C}}}\bigg)\,,
	\end{align*}
	where the second to last step of the computation uses the result of \cite[(35)]{AJLOP} stating that 
	$$
	j^*_C(\mU_n) = \det \uHom_{G_C}\Big(\sum_{i=1}^e\mF_i, \sum_{i=1}^e\mF_j\otimes \Theta\Big)\,.
	$$
	
	We, therefore, see that
	$$
	\chi\big(Q_S, \mO^{\vir}\otimes \textnormal{det}\big(\alpha_S^{[n]}\big)\big) = \chi\big(Q_C, \textnormal{det}\big(\alpha_C^{[n]}\big)\otimes \mD_n\big) \,.
	$$
	Following similar arguments and \cite[Lemma 3.4]{OP1}, we may also conclude 
	$$
	V(\alpha_C;q) =V(\alpha_S;q)
	$$
	for $\alpha_C = \alpha_S|_C$. The statement of the theorem then follows from Theorems \ref{thm4fo}.
\end{proof}
\appendix
\section{Stability conditions and assumptions} 
\label{App: checking of assumptions}
We describe here a slightly more general setup than we need to cover the wall-crossing of Theorem \ref{thmCS} which includes all sheaves on curves and surfaces. For a K-theory class $\alpha$ we denote by 
\begin{itemize}
    \item $\dim(\alpha)$ the dimension of support of its Chern character,
    \item $\rnk(\alpha) = [q^{\dim(\alpha)}]\big\{P_{\alpha}\big\}$ for the Hilbert polynomial $P_{\alpha}$ of $\alpha$ and $p_{\alpha} =\frac{P_\alpha}{\rnk(\alpha)}$ 
\end{itemize}
 When $\llbracket F\rrbracket=\alpha$, we also write $p_F = p_{\alpha}$.

\begin{definition}
\label{Def:stability}
    Fix $Y$ of dimension $d\leq 2$ with a torsion-free sheaf $E\to Y$ and a monic  polynomial $p$. Define an exact category $\acute{\mA}_{E,p}$ to consist of the pairs
    $$
    V\otimes E\longrightarrow F\,,\qquad \textrm{s.t.} \quad p_F = p,\quad  \dim(F) =s
    $$
    for $F$ being $\mu$-semistable or $F=0$. We work with the K-group 
    $$
    K(\acute{\mA}_{E,p} )  = \ZZ\times \{\alpha\in K^0(Y) : p_{\alpha} = p\}
    $$
    admitting a map from the Grothendieck group of $\acute{\mA}_{E,p}$: $K^0(\acute{\mA}_{E,p})\to  K(\acute{\mA}_{E,p} )$
    and a continuous family of stability conditions $ \tau^t:  K(\acute{\mA}_{E,p} ) \to \HH$\footnote{We use here the standard notation for the upper halfplane without the negative real line.} for $t\in [0,1)$:
    \begin{equation}
    \label{Eq: stability}
    \tau^t(r,\alpha) = \begin{cases}
   r e^{i\pi t}
    &\textrm{if}\quad r\neq 0\\
    \rnk(\alpha) e^{i\frac{\pi}{2}}&\textrm{if}\quad r= 0
    \end{cases}\,,
    \end{equation}
    where $r$ is the dimension of $V$.
    \begin{figure}[h]
        \centering
    \includegraphics[scale=0.15]{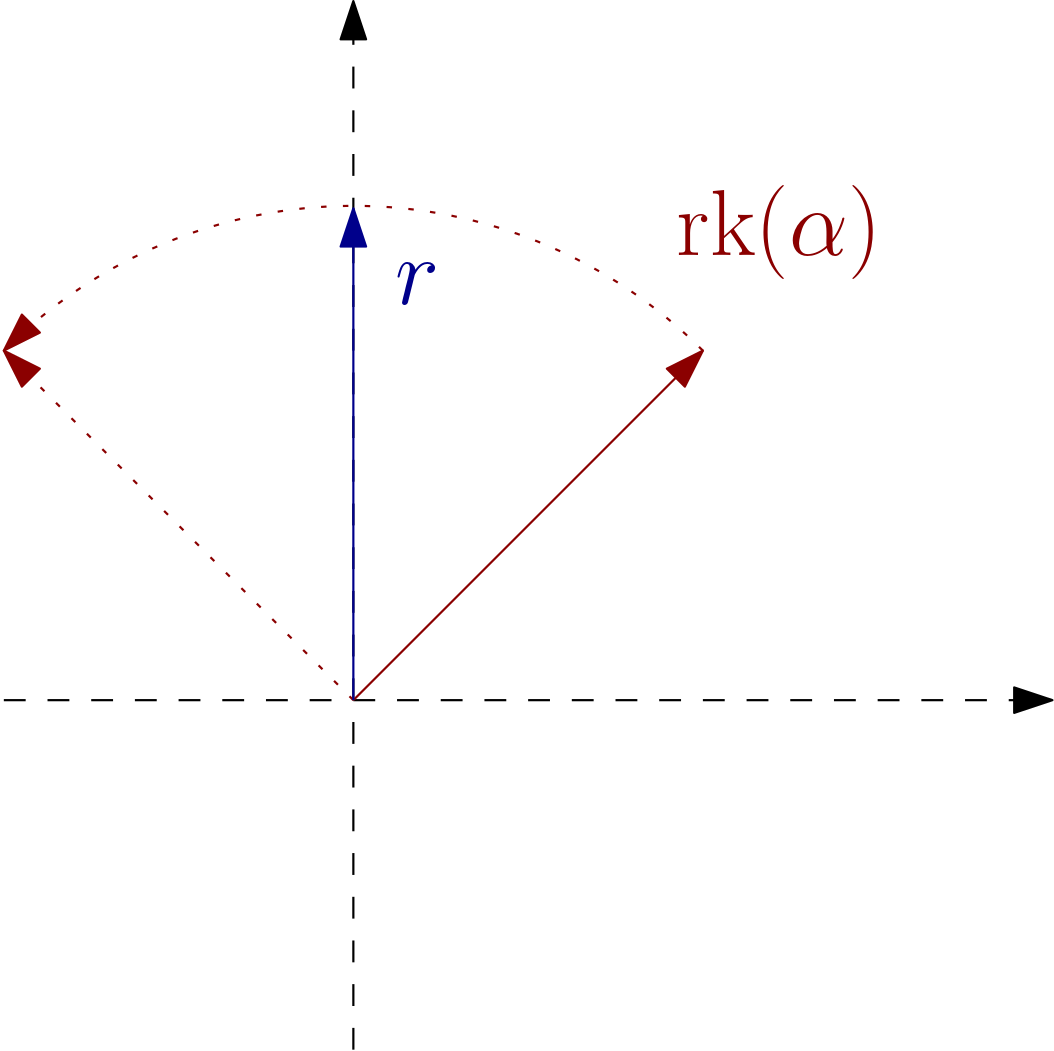}
    \end{figure}
    \end{definition}
    
We described essentially only three stability conditions. Denote by $(d,s) =\big(\dim(Y),\dim(\alpha)\big)$, then the semistable pairs are described as follows:
\begin{enumerate}
    \item when $t\in [0, 1/2)$, then for $(d,s) = (1,1), (2,1)$, the stability condition $\tau^t$ is equivalent to $\bar{\tau}^{0}_1$-stability of Joyce \cite[(5.21), Example 5.6]{JoyceWC}  which we will call \textit{Bradlow stability}. Thus if $r = 1$, then the pair $E\xrightarrow{m} F$ satisfies that 
    \begin{itemize}
        \item F is Gieseker-semistable
        \item if $F'\subset F$ is a strict subsheaf containing the image of $m$, then $p_{F'}< p_{F}$.
    \end{itemize}
   For $(d,s) = (1,0), (2,0)$, the stable pairs are given by surjective maps $E\to F$ to a zero-dimensional sheaf $F$. In all of these cases, we denote $P^{\JS}_{\alpha}$ the resulting moduli space of stable pairs. 
    \item When $t=1/2$, $\tau^0$ is trivial. 
    \item When $t\in (1/2,1)$ only $V\otimes E\to 0$ and $0\to F$ are semistable. 
\end{enumerate}

The construction of stacks in \cite[Def. 8.4]{JoyceWC} can be adapted to our case (also extending our Definition \ref{defAMTH})  and all the corresponding data needed for the construction of vertex algebras with it. They are described in  \cite[Def. 8.2.1]{JoyceWC} for a line bundle $L$, but this works for any $E$. We have already seen how to define the data in Definition \ref{defAMTH} when $F$ is zero-dimensional. 

We only recall that the stack is constructed by starting from 
$$\mV\to  [*/\text{GL}(d,\CC)]\,,\qquad \mF \quad \text{on}\quad  \mM^{\text{ss}}_{\alpha}\times Y$$
the universal vector bundle and the universal sheaf, respectively. Our stack $\acute{\mM}_{(d,\alpha)}$ is defined to parametrize the morphisms $ \pi^*_{1}(\mV)\boxtimes E\to \pi^*_{2,3}(\mF)$ on $[*/\text{GL}(d,\CC)] \times \mM^{\text{ss}}_{\alpha}\times Y$ and exists by Grothendieck--Dieudonn{\'e} \cite[§III.7.7.8-§III.7.7.9]{EGA}. We then take a union over all $(d,\alpha)\in K(\acute{\mA}_{E,p})$ additionally satisfying $\dim(\alpha)=s$ to get $\acute{\mM}_{E,p}$. We denote by 
\begin{equation}
\label{Eq:pi1pi2}
\pi_{1}: \acute{\mM}_{(d,\alpha)} \to [*/\text{GL}(d,\CC)] \,,\qquad \pi_{2}: \acute{\mM}_{(d,\alpha)} \to \mM^{\sms}_{\alpha}
\end{equation}

\begin{definition}
    We say that $E$ is \textit{sufficiently negative} with respect to a fixed $p$ if 
    $$
    \Ext^i\big(E,F\big) = 0,\qquad \textrm{whenever}\, i>1
    $$
    for any semistable $F$ with $p_F = p$. In this case, we denote the moduli space of $\tau^t$-stable pair of class $(1,\alpha)$ by $P^{\JS}_{\alpha}$ when $t<\frac{1}{2}$ and by $[P^{\JS}_{\alpha}]^\vir$ the corresponding virtual fundamental class.
\end{definition}
\begin{remark}
\begin{enumerate}
\item   Notice that for zero-dimensional sheaves $F$ on a surface $X=S$ this condition is satisfied by $\Ext^2(E,F) = \Ext^0(F,E)\otimes \omega_S)^*=0$. 
\item
;   ;       In our set-up, we are forced to work with Gieseker stability conditions instead of $\mu$-stability. This is a consequence of needing some control over lower-dimensional quotients whenever $G\hookrightarrow F$ is an injective map of sheaves of non-zero rank with $\mu(G) = \mu(F)$. Without the additional control over the lower degree coefficients of the Gieseker polynomial, we would not be able to use the stability condition in \eqref{Eq: stability} that sees only $\rnk(\alpha)$ without violating some of the assumptions in the proof of Theorem \ref{Thm:PJS}.
    \end{enumerate}
\end{remark}

The consequence of varying the stability condition from $t= 0$ to $t=1-\epsilon$ is the following wall-crossing formula.
\begin{theorem}
\label{Thm:PJS}
Let $E$ be sufficiently large for a fixed $p_{\alpha}$, then the invariants $[P^{\JS}_{\alpha}]^\vir$ are determined by 
    $$
    [P^{\JS}_{\alpha}]^\vir = \sum_{\begin{subarray}a \underline{\alpha} \vdash \alpha\\ p_{\alpha_i}=p_{\alpha}\end{subarray}}\frac{1}{l!}\, \big[[M_{\alpha_1}]^\inva, \big[\ldots, \big[[M_{\alpha_l}]^\inva, e^{(1,0)}\big]\ldots\big]\big]\,, 
    $$
    where $[-,-]$ is the  Lie bracket on $\check{H}_*(\acute{\mM}_{E,p})$ and $e^{(1,0)}$ is the class of a point $\{(E, 0)\}$ in the component $\acute{\mM}_{(1,0)}$. 
\end{theorem}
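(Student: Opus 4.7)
The strategy is to realize this as a direct application of Joyce's general pair wall-crossing formula \cite{JoyceWC} for the one-parameter family $\{\tau^t\}_{t\in[0,1)}$ on $\acute{\mA}_{E,p}$. The proof thus reduces to the verification of Joyce's abstract hypotheses in our concrete setting, together with the identification of the invariants sitting on either side of the single wall at $t=\tfrac{1}{2}$.

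\textbf{Step 1: Vertex algebra and obstruction theory data.} First I would extend the construction of §\ref{VAinAG} to the pair stack $\acute{\mM}_{E,p}$, which by Grothendieck--Dieudonn\'e is an algebraic stack exactly as in Definition \ref{defAMTH}. The key point is that the pair obstruction theory
\[
\EE = \Delta^*(\Theta^{E,\pa})[-1]
\]
fits the template of Definition \ref{DeftopVA}, where $\Theta^{E,\pa}$ is built out of the projections \eqref{Eq:pi1pi2} by the same recipe as in Definition \ref{defAMTH}. Here the sufficient negativity of $E$ ensures $\Ext^{\geq 2}(E,F)=0$ for all semistable $F$ of class $p_\alpha$, so that $\EE$ is concentrated in degrees $[-1,0]$ on the locus of actual pairs, giving a genuine perfect obstruction theory, and $\Delta^*(\Theta^{E,\pa})[-1]$ restricted to pairs reduces to the usual Behrend--Fantechi obstruction theory on $P^{\JS}_\alpha$ by the short exact sequence argument used for surfaces in the introduction.

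\textbf{Step 2: Verifying Joyce's permissibility axioms for $\tau^t$.} Next I would check that $\tau^t$ of \eqref{Eq: stability} defines a continuous family of weak stability conditions on $\acute{\mA}_{E,p}$ in the sense of \cite[§3]{JoyceWC}, and satisfies the boundedness / openness / constructibility conditions (Assumptions 5.1--5.3 and the pair-specific Assumptions 8.1 of \cite{JoyceWC}). The only nontrivial wall is at $t=\tfrac12$: for $t<\tfrac12$ one recovers exactly Bradlow/JS stability (so $P^{\JS}_\alpha$ parametrizes $\tau^t$-semistable pairs of class $(1,\alpha)$), whereas for $t>\tfrac12$ no pair of rank $\geq 1$ with nonzero $F$ is semistable. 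This step uses the Gieseker refinement in Definition \ref{Def:stability} in an essential way: it is precisely the finer control on subsheaves of the same slope that ensures the wall is simple and that the semistable loci remain bounded across it.

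\textbf{Step 3: Orientation and sign data.} I would then check that the signs $\epsilon^{E,\pa}$ of §\ref{VAinAG} extend to the pair setting in a way compatible with Joyce's sign conventions (cf.\ \eqref{signs}). For $Y$ of dimension $1$ or $2$, no square-root orientation data is required and the comparison is a direct computation of Euler pairings exactly as in Proposition \ref{propmorphismLquot}; for $Y=X$ this compatibility is the one we defer to \cite{bojko3}.

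\textbf{Step 4: Apply wall-crossing at $t=\tfrac12$.} With the axioms verified, \cite[Thm.~5.8]{JoyceWC} (or more precisely its pair version, \cite[Thm.~8.10]{JoyceWC}) yields the wall-crossing identity in the Lie algebra $\check H_*(\acute{\mM}_{E,p})$ between the $\tau^{0}$-semistable invariant $[P^{\JS}_\alpha]^{\vir}$ (which equals $[\mathcal{M}^{\sms}_{(1,\alpha)}(\tau^{0})]^\inva$ since $\tau^0$-semistability and stability agree for class $(1,\alpha)$ by the constraint that the map $E\to F$ is nonzero) and the invariants on the other side of the wall, which by Step 2 are concentrated on the pair $(E\to 0)$ of class $(1,0)$ and on semistable sheaves of class $\alpha_i$ with $p_{\alpha_i}=p_\alpha$. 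The coefficients in Joyce's formula reduce in this wall-geometry to $\tfrac{1}{l!}$ for each ordered decomposition $\underline{\alpha}=(\alpha_1,\ldots,\alpha_l)\vdash\alpha$, matching the theorem's right-hand side. The class $e^{(1,0)}=[\{(E\to 0)\}]$ appears because it is the unique $\tau^1$-semistable object of rank one on the pair side.

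\textbf{Main obstacle.} The conceptual content is entirely Joyce's; the real work is the axiomatic verification in Step 2, in particular checking that the wall at $t=\tfrac12$ does not produce unexpected strictly semistable strata and that the Gieseker refinement together with sufficient negativity of $E$ controls the allowed Harder--Narasimhan filtrations uniformly across the wall. The sign and orientation bookkeeping of Step 3 is routine for $\dim Y\leq 2$ but is the feature that genuinely requires the separate treatment in \cite{bojko3} for $Y=X$.
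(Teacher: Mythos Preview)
Your proposal is correct and follows essentially the same approach as the paper: both reduce the theorem to verifying Joyce's Assumptions 5.1--5.3 of \cite{JoyceWC} for the family $\{\tau^t\}$ on $\acute{\mA}_{E,p}$ and then invoking his wall-crossing theorem across the single wall at $t=\tfrac12$. The paper is somewhat more granular---it walks through the assumptions item by item, singles out the quiver-pair Assumption 5.1(h) (via the Garc\'ia--Prada embedding and properness of the auxiliary master-space moduli) as requiring separate attention, and pins the $1/l!$ coefficients to \cite[Prop.~13.8, Lem.~13.9]{JoyceSong}---but your four-step outline captures the same argument.
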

\begin{proof}
  To prove this, we need to check Assumptions \cite[Ass. 5.1 - 5.3]{JoyceWC} for the classes in $$\mP(\acute{\mA}_{E,p}) = \{(e,\alpha)\in K(\acute{\mA}_{E,p}): e=0,1 \}\,.$$ Most of the assumptions are easy enough to check, so we only give a brief summary of them to pin down the ones that need some additional care.
  \begin{enumerate}
      \item Assumptions 5.1 (a)-(c) are meant to lift the data necessary for constructing vertex algebras to derived stacks. They are addressed in \cite[§8.2.2]{JoyceWC} for $E=L$ but again extended to our setting.
      \item Assumptions 5.1 (d) - (f) are related to the existence of perfect obstruction theories on the stack $\acute{\mM}_{E,p}$ and their compatibilities with the vertex algebra.  In our case, it is given by 
      $$
      \Ob\to \LL_{\acute{\mM}_{(d,\alpha)}}\,,
      $$
    where 
    \begin{equation}
    \label{Eq:Ob}
    \Ob = \pi_{2,3\,*}\Big(\pi_1^*(\mV)\otimes E\to ( \pi_2\times \id_Y)^*\mF, (\pi_2\times \id_Y)^*\mF \Big)^\vee\,.
    \end{equation}
      In this case, the obstruction theories are allowed to have degree $-1$ contributions as explained in Poma's \cite[Def. 3.1]{Po14} and \cite[Def. 2.11, 3.6]{JoyceWC}. The next lemma proves assumption  \cite[Ass 5.1 (f) ii. ]{JoyceWC} \footnote{Note that the quasi-smoothness assumption Joyce uses,  requires that the derived enrichment of $\mathcal{P}^t_{\alpha}$ has cotangent complex of tor-amplitude $[-1,1]$. The perfect obstruction theory is the restriction of this complex to the classical truncation, so it is equivalent.}:
\begin{lemma}
\label{Lem:obs}
The  pair obstruction theory given by \eqref{Eq:Ob} on $\acute{\mM}_{(1,\alpha)}$ is perfect of tor-amplitude $[-1,1]$.
\end{lemma}
\begin{proof}
Setting the notation $I = [E\to F]$, the cohomologies of the obstruction theory at this point become 
$$
\Ext^i(I,F)\,.
$$
Using the long exact sequence
$$\Ext^{i}(F, F)\to \Ext^i(E, F)\to\Ext^{i}(I, F)\to \Ext^{i+1}(F, F)\to \Ext^{i+1}(E,F)\,.$$
 the vanishing of $\Ext^i(I,F)$ for $i\geq 2$ follows from $\Ext^2(E,F) = 0$. 
\end{proof}
The rest of the said assumptions focus on the isomorphism
$$
\Delta^*\Theta^{E_Y,\pa}\cong \Ob
$$
that holds naturally in our setting, together with the compatibility diagram  with respect to taking direct sums that is already proved in \cite[Def. 8.15]{JoyceWC}.
      \item Assumptions 5.1 (g) describe the necessary data for the construction of the quiver-pairs leading to (enhanced) master spaces. In the case of sheaves the data $(\mB_k, Q_k, \lambda_k)$ of this assumptions is given for every $k\geq 0$ by 
$$
\mB_k  = \big\{F \in \Coh(X) : H^2\big(F(k)\big) = 0\big\}\,,\quad Q_k(E) = H^0\big(E(k)\big) \,, 
$$
and $\lambda_k(E) =  \dim\big(Q_k(E)\big)$. In our case, \cite[8.1.1]{JoyceWC} uses a fully faithful embedding 
$$
G:\acute{\mA}_{E,p}\longrightarrow \Coh^{\text{SU}(2)}(X\times \PP^1)
$$
of García--Prada \cite[Prop. 3.9]{GP94} where the $\text{SU}(2)$ in superscript denotes the $\text{SU}(2)$ equivariant sheaves. Using the corresponding data on sheaves described above, we get by composition $\acute{Q}_k =Q_k\circ G$ and $\acute{\lambda}_k=\lambda_k\circ G$ the necessary data
$$
\{(\acute{B}_k, \acute{Q}_k,\acute{\lambda}_k)\}_{k\in \ZZ}
$$
for $\acute{\mA}_{E,p}$. Here $\acute{\mB}_k$ is preimage of $B_k\subset \Coh^{\text{SU}(2)}(X\times \PP^1)$ under $G$.
\item We postpone Assumption 5.1 (h) dealing with quiver pairs to the end of this section.
\item Assumptions 5.2 (a) requires the existence of Harder--Narasimhan filtrations for any object $P\in \acute{\mA}_{E,p}$ of class in $\mP(\acute{\mA}_{E,p})$. This is true by the description of $\tau^t$ for each $t$ we gave above.
  \item In (b), the condition of being $\tau^t$-(semi)stable is required to be an open condition on the moduli stack $\acute{\mM}_{E,p}$ and $\acute{\mM}^{\rig}_{E,p}$ leading to open, finite type substacks $\mM^{\text{st}}_{(d,\alpha)}, \mM^{\text{ss}}_{(d,\alpha)}\subset \acute{\mM}^{\rig}_{E,p}$ whenever $(d,\alpha)\in \mP(\acute{\mA}_{E,p})$. This is the consequence of this result for $\mu$-semistable sheaves \cite[§7.2.3]{JoyceWC} and Bradlow pairs \cite[Def. 5.5, §7.4]{JoyceWC} using our description of $\tau^t$ for $t\neq \frac{\pi}{2}$. In the case $t=\frac{\pi}{2}$ this follows because the projection $\pi_1\times \pi_2$ from \eqref{Eq:pi1pi2} are finite type with $[*/\GL(d,\CC)]$, $\mM^{\sms}_\alpha$ being also finite type.
\item In $(c)$, semistable summands $P_1,P_2$ with $\tau^t(P_1) =\tau^t(P_2)$ of an object $P$ with class in $\mP(\acute{\mA}_{E,p})$ need to be again in $\mP(\acute{\mA}_{E,p})$ which is trivially satisfied.
\item For condition (d), we define for any $(d,\alpha),(e,\beta)\in C(\acute{\mA}_{E,p})_{\pe}$ the function  $$\lambda^t_{(d,\alpha)}\big(e,\beta\big) = \big(t-\frac{1}{2}\big)(e-d)$$
which serves the purpose that if $\tau^{\frac{1}{2}}(d,\alpha) =\tau^{\frac{1}{2}}(e,\beta)$, then $$\tau^t(d,\alpha)\quad \Box\quad \tau^t(e,\beta)\iff \lambda^t_{(d,\alpha)}(e,\beta)\quad\Box\quad 0\,,$$
where $\Box$ stands for $<$ or $>$.
\item Assumption (g) requires that when there are no strictly $\mu^t$-semistables of class $(d,\alpha)\in \mP(\acute{\mA})$, then the moduli scheme of $\tau^t$-semistable sheaves is proper. This only needs to be checked for $t\neq 1/2$ in our case and was done in \cite[§7.2.3 and § 8.4]{JoyceWC} for sheaves and Bradlow pairs respectively. 
\item Finally, in our case the assumption 5.3 (a) the family of Definition \ref{Def:stability} is clearly continuous in the sense of \cite[Def. 3.15]{JoyceWC}.
\item The Assumption 5.3 (b)  reduces to requiring that the set of non-zero coefficients $U((\underline{d},\underline{\alpha}),\tau^{t_-}, \tau^{t_+})$ for the wall $t=1/2$ such that $\underline{d}\vdash 1, \underline{\alpha}\vdash \alpha $, $(d_i,\alpha_i), (d,\alpha)\in \mP(\acute{\mA}_{E,p}) $ is finite, but there is nothing to check here.
  \end{enumerate}
  The coefficients $1/l!$ were obtained by applying the arguments in Joyce--Song \cite[ Propositions 13.8 and Lemma 13.9]{JoyceSong} to their definition in \cite[Def. 3.10]{JoyceWC}.

 \paragraph{Assumption 5.1 (h)} 
Joyce constructs a category of quiver-pairs with new stability conditions. These are described as follows:  Let $Q =(V,E)$ be a quiver with distinguished sub-graphs
$
Q_0 =(V_0,E_0)\,,  Q_1=(V_1,\emptyset)\,,
$
such that if we have an edge $e\in E\backslash E_0$, then its origin $o(e)\in V_0$ and target $t(e)\in V_1$.
\begin{figure}[h]
\centering
\includegraphics[scale=0.40]{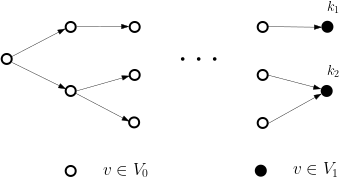}
\caption{Quiver-pairs}
\label{FigGenQu}
\end{figure}
Fixing a map $\underline{k}:V_1\to \ZZ$, the category $\acute{\mathcal{B}}_{\,Q,\underline{k}}$ is defined to have the objects
$
(V\otimes E\to F,V_Q,\phi_Q),
$
where 
$$P = V\otimes E\to F\in\bigcap_{k_{v_1}, v_1\in V_1} \acute{\mB}_{k_{v_1}}\,.$$
The map $V_{Q}$ assigns to every $v\in Q_0$ a vector space $V_v$ and $\phi_Q$ to each $e\in E_0$ a morphism of these vector spaces and to each $e\in E\backslash E_0$ the map
$$
\phi_{e}: V_{o(e)}\to \acute{F}_{k_{t(e)}}(P)\,.
$$

We then require that all the assumptions that we discussed thus far for $\acute{\mA}_{E,p}$ hold for the choice of stability condition constructed in \cite[(5.21)]{JoyceWC} out of our Definition \ref{Def:stability} and the set of permissible classes
$$
\mP(\acute{\mB}_{\underline{k}}) = \prod_{v_1\in V_1}\ZZ_{\geq 0}\times C(\acute{\mA}_{E,p})\,.
$$
Going through the proofs of \cite[§7.4]{JoyceWC}, one sees that the only non-trivial check is the one of properness of moduli spaces. We separate it into three cases:
  \begin{enumerate}
  \item For $t\in [0,1/2)$ properness follows from the comparison to Bradlow-stability $\bar{\tau}^0_1$  and the assumptions having been checked for it in \cite[§8.1.4,§8.2]{JoyceWC}.
  \item For $t=1/2$ this is just the result that the stack of semistables in class $(r,\alpha)$ is the full $\acute{\mM}_{(r,\alpha)}$. The projection $\pi_1\times \pi_2$  to $[*/\GL(d,\CC)]\times\mM^{\text{ss}}_{\alpha}$ satisfy the valuative criterion of properness (VCP). The arguments for the moduli stack $\mM^{\text{ss}}_{\alpha}$ in \cite[§7.4]{JoyceWC} show that VCP holds for it so we conclude it for $\acute{\mM}^{\sms}_{(d,\alpha)}$, $d=0,1$. Applying the procedure in \cite[§7.4]{JoyceWC}, this leads to the proof of properness also for quiver-pairs in Figure \ref{FigGenQu}
\end{enumerate}
\end{proof}
\printbibliography
\end{document}